\newtheorem{thm}{Theorem}[section]
\newtheorem{cor}[thm]{Corollary}
\newtheorem{prop}[thm]{Proposition}
\newtheorem{lem}[thm]{Lemma}
\theoremstyle{definition}
\newtheorem{defn}[thm]{Definition}
\newtheorem{oq}{Open Question}[section]
\theoremstyle{remark}
\renewcommand{\leq}{\leqslant}
\renewcommand{\geq}{\geqslant}
\renewcommand{\nleq}{\nleqslant}
\renewcommand{\epsilon}{\varepsilon}
\newcommand{\uhr}{\upharpoonright}
\newcommand{\converges}{\mathord{\downarrow}}
\newcommand{\diverges}{\mathord{\uparrow}}
\newcommand{\sub}[1]{_{\textup{\tiny{\fontfamily{cmr}\selectfont #1}}}}
\DeclareMathOperator{\dom}{dom}
\DeclareMathOperator{\gra}{graph}
\begin{document}

\title[Dense Computability, Upper Cones, and Minimal Pairs]{Dense
Computability, Upper Cones, \\ and Minimal Pairs}
\author[Astor]{Eric P.\@ Astor}
\address{Google LLC, 111 8th Ave, New York, NY 10011}
\email{eric.astor@gmail.com}

\author[Hirschfeldt]{Denis R.\@ Hirschfeldt}
\address{Department of Mathematics, University of Chicago}
\email{drh@math.uchicago.edu}

\author[Jockusch]{Carl G.\@ Jockusch, Jr.}
\address{Department of Mathematics, University of Illinois, 1409 W. Green St., Urbana, IL 61801}
\email{jockusch@math.uiuc.edu}

\keywords{asymptotic density, dense computability, coarse computability, generic computability, reducibilities, quasiminimality}

\thanks{Hirschfeldt was partially supported by grants DMS-1101458 and
DMS-1600543 
from the National Science Foundation of the United States, and a
Collaboration Grant for Mathematicians from the Simons
Foundation.}

\subjclass[2010]{Primary; 03D30; Secondary 03D25, 03D32}

\date{\today}

\begin{abstract}
This paper concerns algorithms that give correct answers with (asymptotic)
density $1$.  A \emph{dense description} of a function $g : \omega \to \omega$
is a partial function $f$ on $\omega$ such that $\{n : f(n) = g(n)\}$ has
density $1$. We define $g$ to be \emph{densely computable} if it has a partial
computable dense description $f$. Several previous authors have studied the
stronger notions of generic computability and coarse computability, which
correspond respectively to requiring in addition that $g$ and $f$ agree on the
domain of $f$, and to requiring that $f$ be total. Strengthening these two
notions, call a function $g$ \emph{effectively densely computable} if it has a
partial computable dense description $f$ such that the domain of $f$ is a
computable set and $f$ and $g$ agree on the domain of $f$. We compare these
notions as well as asymptotic approximations to them that require for each
$\epsilon > 0$ the existence of an appropriate description that is correct on
a set of lower density of at least $1 - \epsilon$. We determine which
implications hold among these various notions of approximate computability and
show that any Boolean combination of these notions is satisfied by a c.e.\@ set
unless it is ruled out by these implications. We define reducibilities
corresponding to dense and effectively dense reducibility and show that their
uniform and nonuniform versions are different. We show that there are natural
embeddings of the Turing degrees into the corresponding degree structures, and
that these embeddings are not surjective and indeed that sufficiently random
sets have quasiminimal degree. We show that nontrivial upper cones in the
generic, dense, and effective dense degrees are of measure $0$ and use
this fact to show that there are minimal pairs in the dense degrees.
\end{abstract}

\maketitle

\section{Introduction}
\label{sec:intro}

Generic computability and coarse computability, which have played
significant roles in several recent papers such
as~\cite{ACDJL,A,CI,DJS,DI,HJKS,HJMS,I1,I2,JS}, both capture the idea
of computing a function on ``almost all'' inputs, where ``almost all''
is defined in terms of asymptotic density. The difference between the
two notions is that generic computability permits errors of omission
(i.e., divergent computations), while coarse computability permits
errors of commission (i.e., incorrect answers). In this paper, we
introduce and begin the study of a notion of asymptotic computability
in which both forms of error may be present, and reintroduce another,
briefly studied in the 1970's but apparently since forgotten, which
requires that our computations emit a signal whenever an error might be
possible. In particular, we study reducibilities and degree structures
arising from these notions. We assume familiarity with basic notions
from computability theory and algorithmic randomness, as can be found
for instance in~\cite{DH}.

We begin by defining the four notions of asymptotic computation we
will study, followed by some historical remarks, definitions of
computability bounds corresponding to these notions, and a brief outline
of the paper.

\begin{defn}
Let $A \subseteq \omega$. The \emph{density of $A$ below $n$}, denoted
by $\rho_n(A)$, is $\frac{|A \uhr n|}{n}$.

The \emph{upper (asymptotic) density} $\overline{\rho}(A)$ of $A$ is
$\limsup_n \rho_n(A)$.

The \emph{lower (asymptotic) density} $\underline{\rho}(A)$ of $A$ is
$\liminf_n \rho_n(A)$.

If $\overline{\rho}(A)=\underline{\rho}(A)$ then we call this quantity
the \emph{(asymptotic) density} of $A$, and denote it by $\rho(A)$.
\end{defn}

As mentioned above, the following two notions have begun to be widely
studied. (We define our notions of asymptotic computability for
functions, but as usual we identify a set with its characteristic
function.)

\begin{defn}
Let $g : \omega \to \omega$. A \emph{partial description} of $g$ is a
partial function $f : \omega \to \omega$ such that $f(n)=g(n)$
whenever $f(n)$ is defined.  A \emph{generic description} of $g$ is a
partial description of $g$ with domain of density $1$. A function is
\emph{generically computable} if it has a partial computable generic
description.

A \emph{coarse description} of a function $g : \omega \to \omega$ is a
(total) function $f : \omega \to \omega$ such that $f(n) = g(n)$ on a
set of density $1$. A function is \emph{coarsely computable} if it has
a computable coarse description.
\end{defn}

When considering sets rather than functions, it is usual to assume
that coarse descriptions are themselves sets, but this assumption
makes no difference in the definition of coarse computability and
related notions. In a few places below, we will mention effective
(Cohen) genericity, and in particular $1$\-/genericity. The collision
in nomenclature between generic computation and effective genericity
is unfortunate, particularly in results that connect these notions,
but both sets of terminology seem too well-established to change, and
it should be clear below what meaning the word ``generic'' has in each
context.

A weaker way to obtain a notion of asymptotic computability is to
place no restrictions on how a description may fail to agree with the
function described, and require only that these failures be
restricted to a negligible set of inputs.

\begin{defn}
A \emph{dense description} of a (total) function $g : \omega \to \omega$ is
a partial function $f : \omega \to \omega$ such that
$f(n) = g(n)$ on a set of density $1$. A function is
\emph{densely computable} if it has a computable dense description.
\end{defn}

Notice that a dense description of $g$ is the same as a generic
description of some coarse description of $g$. Of course, any function
that is either generically or coarsely computable must be densely
computable, as all generic or coarse descriptions are dense
descriptions.

At the other extreme, we have descriptions that are required to either
answer correctly or halt with a signal that they will not give any
answer. We denote this signal by the symbol $\square$.

\begin{defn} \label{defnedc}
For a function $f : \omega \to \omega \cup \{\square\}$, the
\emph{strong domain} of $f$ is $f^{-1}(\omega)$.  Let $g : \omega \to
\omega$. A \emph{strong partial description} of $g$ is a (total)
function $f : \omega \to \omega \cup \{\square\}$ such that
$f(n)=g(n)$ on the strong domain of $f$.  An \emph{effective dense
description} of $g$ is a strong partial description of $g$ with
strong domain of density $1$. A function is \emph{effectively densely
computable} if it has a computable effective dense description.
\end{defn}

Note that a function is effectively densely computable if and only if
it has a partial computable generic description $f : \omega \to \omega$ such
that the domain of $f$ is computable. However, it is more convenient
for the purposes of relativization to work with total functions when
possible. Furthermore, as we will see, the totality of effective dense
descriptions causes them to behave more like coarse descriptions than
like generic ones in many ways.

Any function that is effectively densely computable is both
generically and coarsely computable, since we can modify an effective
dense description $f$ into either a generic description $f_0$ (by
having $f_0(n)\diverges$ whenever $f(n)=\square$) or a coarse
description $f_1$ (by defining $f_1(n)=0$ whenever $f(n)=\square$). We
can think of a computable effective dense description for a function
$g$ as an algorithm that, for each $n$, either correctly computes
$g(n)$ or announces that it cannot do so, such that the set on which
the algorithm fails to give an answer is negligible. Such algorithms
arise for instance in the case of problems that have computably
bounded generic-case complexity, in the sense of~\cite{KMSS}.

In addition to defining generic-case complexity, Kapovich, Myasnikov,
Schupp, and Shpilrain~\cite{KMSS} introduced the notion of generic
computability. Jockusch and Schupp~\cite{JS} then began to study it
from a computability-theoretic viewpoint. They also defined coarse
computability, although unbeknownst to them, that notion had already
been considered by Terwijn~\cite[Section 3.3]{T}. He was studying the
extent to which an incomplete set can resemble a complete one. As he
noted, versions of this question were also the focus of a much earlier
paper by Lynch~\cite{L} in which she answered a question of
Meyer~\cite{M}. In these two early works, the relevant notion was what
we call effective dense computability. Indeed, Meyer began by saying
that, ``The set of valid sentences of first-order predicate calculus
is not recursive, but the hope for general approaches to mechanical
theorem proving is that a reasonable fraction of the ``interesting''
sentences are effectively decidable. Ignoring the qualification that
sentences [be] ``interesting,'' we ask what fraction of the sentences
can be classified effectively as valid or invalid.'' He then gives the
following definition to formulate his question in ``purely
recursion-theoretic terms'': A set $C$ is \emph{approximable to within
  $\epsilon$} if there exist computable sets $A \subseteq C$ and $B
\subseteq \overline{C}$ such that $\limsup_n \frac{|[0,n-1] \setminus
  (A \cup B)|}{n} \leq \epsilon$. It is easy to see that an
effectively densely computable set is, in his terminology, one that is
approximable to within $0$. As far as we know, there has been no
further work on effective dense computability since Lynch's paper, and
dense computability has not been defined prior to this paper (although
it is mentioned by Cholak and Igusa~\cite{CI} in a
simultaneously-written paper).

As suggested by Meyer's definition, we can generalize each of our
notions to sets of lower density and define computability bounds as
follows. For generic computation, this was done by Downey, Jockusch,
and Schupp~\cite[Definition 6.9]{DJS}. They used the terms ``computable at density
$r$'' and ``asymptotic computability bound'', but since we are
considering several notions of asymptotic computation, we adopt the
terminology used by Hirschfeldt, Jockusch, McNicholl, and
Schupp~\cite[Section 1]{HJMS}, who also gave the corresponding definitions for
coarse computability.

\begin{defn}
We say that a function $g : \omega \to \omega$ is \emph{partially computable at density $r$} if there
is a computable partial description $f$ of $g$ such that $\underline{\rho}(\dom
f) \geq r$. The \emph{partial computability bound} of $g$ is
\[
\alpha(g) : = \sup\{r : \mbox{$g$ is partially computable at density
$r$}\}. 
\]

We say that $g$ is \emph{coarsely computable at density $r$} if there
is a (total) computable function $f$ such that $\underline{\rho}(\{n
: f(n)=g(n)\}) \geq r$. The \emph{coarse computability bound} of $g$
is
\[
\gamma(g) : = \sup\{r : \mbox{$g$ is coarsely computable at density
$r$}\}. 
\]
\end{defn}

Notice that a function is generically computable if and only if it is partially
computable at density $1$, and coarsely computable if and only if it is
coarsely computable at density $1$. It was pointed out
in~\cite[Observation 6.10]{DJS} that every nonzero c.e.\@ degree contains a
c.e.\@ set $A$ such that $\alpha(A)=1$ but $A$ is not generically computable,
and in~\cite[Theorem 3.3(ii)]{HJMS} that every nonzero c.e.\@ degree contains a
c.e.\@ set $B$ such that $\gamma(B)=1$ but $B$ is not coarsely computable. It
was also shown in~\cite[Lemma 1.7]{HJMS} that $\alpha(g) \leq \gamma(g)$ for
all $g$. (The proof was given for sets, but works in the setting of functions
as well.)

For our other notions of asymptotic computation, we have the following
analogous definitions.

\begin{defn}
\label{def:deltabeta}
Let $g : \omega \to \omega$.   We say that $g$ is \emph{weakly partially computable at density $r$} if there is a partial computable function $f$ such that
$\underline{\rho}(\{n : f(n)=g(n)\}) \geq r$. The \emph{weak partial
computability bound} of $g$ is
\[
\delta(g) : = \sup\{r : \mbox{$g$ is weakly partially computable at
density $r$}\}. 
\]

We say that $g$ is \emph{strongly partially computable at density $r$}
if there is a strong partial description $f$ of $g$ such that the
lower density of the strong domain of $f$ is at least $r$. The
\emph{strong partial computability bound} of $g$ is
\[
\beta(g) : = \sup\{r : \mbox{$g$ is strongly partially computable at
density $r$}\}. 
\]
\end{defn}

A function is densely computable if and only if it is weakly partially
computable at density $1$, and effectively densely computable if and only if it
is strongly partially computable at density $1$. As we will see in the
next section, the above computability bounds are not in fact new
concepts, because $\delta(g)=\gamma(g)$ and $\beta(g)=\alpha(g)$ for
all $g$. For a set $A$, being strongly partially computable at density
$r$ is the same as being approximable to within $1-r$, as defined by
Meyer~\cite{M}.

In Section~\ref{sec:asymptoticRelations} we discuss the relations
among our four notions of asymptotic computation and their
corresponding computability bounds, summarizing these in
Figure~\ref{fig:denseComputation} and Theorem \ref{boolean}. In Section~\ref{sec:relativization}
we relativize our notions, and define corresponding reducibilities and
degree structures. In Sections~\ref{sec:cofinite} and
\ref{sec:embeddings} we discuss natural embeddings of the Turing
degrees into these degree structures and the resulting notion of
quasiminimality, beginning by introducing some auxiliary notions
related to the mod-finite and cofinite reducibilities of Dzhafarov and
Igusa~\cite{DI}. In Section~\ref{sec:upperCones} we study the sizes of
upper cones and existence of minimal pairs in these
structures. Hirschfeldt, Jockusch, Kuyper, and Schupp~\cite[Theorem 5.2]{HJKS}
showed that nontrivial upper cones in the coarse degrees have measure
$0$, and used this fact to show the existence of minimal pairs in the
coarse degrees. We show that nontrivial upper cones in the generic,
dense, and effective dense degrees also have measure $0$, and show the
existence of minimal pairs in the dense degrees. Whether there are
minimal pairs in the generic or effective dense degrees remains
unknown. In Section~\ref{sec:questions}, we gather several open
questions.

\section{Comparing notions of asymptotic computability}
\label{sec:asymptoticRelations}

Our first order of business is to establish the relations between all
four notions of asymptotic computability, including their
corresponding computation bounds, which we summarize in
Figure~\ref{fig:denseComputation}.

\begin{figure}[ht]
\[
\xymatrix{
& *+[F]+\hbox{edc} \ar[dl] \ar[dr] & & \\
*+[F]+\hbox{cc} \ar[dr] & & *+[F]+\hbox{gc} \ar[dl] \ar[dr] & \\
& *+[F]+\hbox{dc} \ar[dr] & & *+[F]+\hbox{$\alpha=1$} \ar[dl] \\
& & *+[F]+\hbox{$\gamma=1$} &
}
\]
\caption{The graph of implications between notions of asymptotic
computability, including computability bounds. All implications
shown are strict, and all not shown are false. We have abbreviated
coarse computability by ``cc'', generic computability by ``gc'', and
(effective) dense computability by ``(e)dc''.}
\label{fig:denseComputation}
\end{figure}
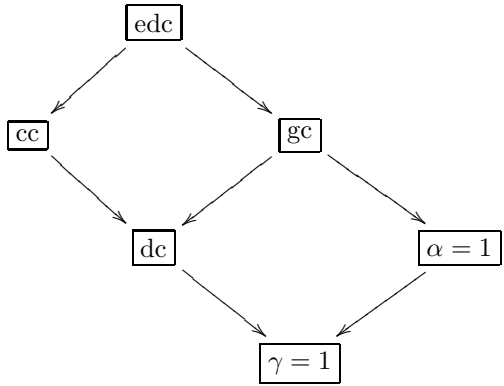

We have already noted that effective dense computability implies both
generic and coarse computability, which in turn both imply dense
computability. Jockusch and Schupp~\cite[Theorems 2.15 and 2.26]{JS} showed that there are
c.e.\@ sets that are coarsely computable but not generically computable, and
vice-versa, and hence that generic and coarse computability are
incomparable. It follows that effective dense computability is
strictly stronger than both generic and coarse computability, while
dense computability is strictly weaker than both.

Turning to the bottom part of Figure~\ref{fig:denseComputation}, we
begin by showing that we do not need to add the computability
bounds $\beta$ and $\delta$ from Definition~\ref{def:deltabeta} to it.

\begin{prop}
\label{prop:abgd}
Let $g:\omega\to\omega$. Then $\alpha(g)=\beta(g)$ and
$\gamma(g)=\delta(g)$.
\end{prop}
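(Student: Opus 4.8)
The plan is to prove each equality by showing both inequalities; for each, one direction is immediate and the other requires a padding-type construction. First consider $\gamma(g) = \delta(g)$. Since every total computable function is in particular a partial computable function, any witness to coarse computability at density $r$ is also a witness to weak partial computability at density $r$, so $\gamma(g) \leq \delta(g)$ is trivial. For the reverse, suppose $f$ is a partial computable function with $\underline{\rho}(\{n : f(n) = g(n)\}) \geq r$. I would like to convert $f$ into a total computable function without losing density of correctness. The naive idea—declare $h(n) = 0$ when $f(n)\!\diverges$—fails because $f$ might diverge on a set of positive upper density while still agreeing with $g$ on a set of lower density $\geq r$. The fix is to enumerate the graph of $f$ and, when defining $h(n)$, search for the first stage $s$ at which either $f(n)\!\converges$ or $n$ is ``small enough relative to later-halting inputs''; more precisely, I would use the standard trick of defining $h(n)$ by running $f$ on all inputs in parallel and committing to $h(n) = f(n)$ as soon as $f(n)$ halts, but using an auxiliary bookkeeping so that the set $\{n : h(n) = f(n) \text{ if } f(n)\!\converges\}$ differs from $\{n : f(n)\!\converges\}$ only on a set of density $0$. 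Since $\{n : f(n)=g(n)\} \subseteq \dom f$, and we only change outputs on a density-$0$ set, $\underline\rho(\{n : h(n)=g(n)\}) \geq r$ as well.

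Actually, the cleanest argument avoids that bookkeeping entirely: given any $\epsilon > 0$, I will produce a total computable $h$ with $\underline\rho(\{n : h(n)=g(n)\}) \geq r - \epsilon$, which suffices since $\gamma(g)$ is a supremum. Pick $N$ so large that (using $\underline{\rho}(\dom f \cap \{n : f(n)=g(n)\}) \geq r$) the ``convergence lag'' beyond $N$ is controlled; then for $n \geq N$ define $h(n)$ by dovetailing $f$ on inputs $\leq n$ and outputting $f(n)$ if it converges within that bounded search, and $0$ otherwise, with the threshold chosen so that only a density-$0$ tail is lost. The main obstacle is precisely making this quantitative estimate rigorous: one must check that the set of $n$ for which $f(n)$ eventually converges but only ``long after'' stage $n$ can be made to have density $0$ by an appropriate choice of computable threshold function, which is a routine but slightly fiddly application of the fact that $\dom f$, being $\Sigma^0_1$ with lower density $\geq r$, has a computable ``enumeration modulus'' that is finite-to-one in the relevant sense.

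Now consider $\alpha(g) = \beta(g)$. Here $\alpha(g) \leq \beta(g)$ follows because a partial description $f$ of $g$ (with $f(n) = g(n)$ on $\dom f$) can be turned into a strong partial description by setting $f(n) = \square$ off $\dom f$—but this requires $\dom f$ to be computable, which it need not be. Instead, as in the first part, fix $\epsilon > 0$ and approximate: given a partial computable description $f$ of $g$ with $\underline\rho(\dom f) \geq r$, build a strong partial description $\tilde f$ whose strong domain is a \emph{computable} subset of $\dom f$ of lower density $\geq r - \epsilon$, using a bounded-search commitment as above; since $\tilde f$ agrees with $f$, hence with $g$, on its strong domain, $\beta(g) \geq r - \epsilon$. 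Conversely $\beta(g) \leq \gamma(g) = \delta(g)$ and $\alpha(g) \leq \delta(g)$ are already known or immediate, but that only gives $\alpha(g) \leq \beta(g)$ modulo the chain, not directly; so the direct argument is the substantive one. The key point shared by both halves is the lemma that any $\Sigma^0_1$ set $S$ with $\underline{\rho}(S) \geq r$ has, for every $\epsilon > 0$, a computable subset $S' \subseteq S$ with $\underline{\rho}(S') \geq r - \epsilon$; I would isolate and prove this first (by taking $S' = \{n : n \in S_{t(n)}\}$ for a suitable computable $t$, choosing $t$ to grow fast enough that $\rho_n(S_{t(n)})$ tracks $\rho_n(S)$ up to $\epsilon$), and then both equalities follow by applying it to $S = \dom f$ (for $\alpha=\beta$) and combining with the totalization trick (for $\gamma=\delta$).
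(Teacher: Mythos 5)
Your overall plan coincides with the paper's: both equalities are reduced to a single key fact, namely that a c.e.\@ set (here $\dom f$) has, for every $\epsilon>0$, a computable subset whose lower density is within $\epsilon$ of the original, after which one pads the description with $\square$ off that subset (for $\alpha=\beta$) or with $0$ off it (for $\gamma=\delta$) and invokes the fact that the bounds are defined as suprema. The paper does exactly this, but it does not prove the key fact; it cites it as Theorem 3.9 of Downey, Jockusch, and Schupp \cite{DJS}. The genuine gap in your proposal is that you treat this lemma as routine, and the argument you sketch for it does not work. First, your repeated claim that the loss can be made to have density $0$ (a ``density-$0$ tail,'' bookkeeping that changes things ``only on a set of density $0$'') is false in general: it would give every c.e.\@ set of density $1$ a computable subset of density $1$, contradicting Theorem 2.22 of Jockusch and Schupp \cite{JS}, which the paper cites for exactly this phenomenon. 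Losing $\epsilon$ is unavoidable, and you do eventually retreat to the correct $\epsilon$-version; since the bounds are suprema, that version suffices.

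Second, your proposed proof of the $\epsilon$-version is not correct as stated. Taking $S'=\{n : n\in S_{t(n)}\}$ with a computable $t$ chosen ``so that $\rho_n(S_{t(n)})$ tracks $\rho_n(S)$'' gives no control over $\rho_n(S')$: the quantity $\rho_n(S_{t(n)})$ counts every $m<n$ that has appeared by the late stage $t(n)$, whereas $S'\uhr n$ contains only those $m$ that appeared by their own, much earlier, threshold $t(m)$. Indeed, your criterion on $t$ is compatible with $S'$ being empty: if each $m$ happens to enter $S$ only at stage $t(m)+1$ and $t$ is strictly increasing, then $\rho_n(S_{t(n)})=1$ for every $n$, yet $S'=\emptyset$. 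Nor can one fix this by simply taking $t$ ``fast enough'': a c.e.\@ set can be enumerated so that, relative to any prescribed computable threshold, a non-negligible fraction of its elements appear late (again the phenomenon behind \cite[Theorem 2.22]{JS}), so the careful choice of $t$ is precisely where the content of \cite[Theorem 3.9]{DJS} lies, and it is missing from your argument. With that lemma in hand (cited or properly proved), the remainder of your plan matches the paper's proof: for $\alpha\leq\beta$ output $\square$ off the computable subset $B\subseteq\dom f$, and for $\delta\leq\gamma$ output $0$ off $B$ and verify that the set of agreement with $g$ still has lower density at least $r-\epsilon$; the easy inequalities $\beta\leq\alpha$ (erase the $\square$'s) and $\gamma\leq\delta$ should also be stated explicitly, though they are immediate.
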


\begin{proof}
Suppose $g$ is partially computable at density $d$, as witnessed by
the partial description $f$, and let $\epsilon>0$. By Downey, Jockusch,
and Schupp~\cite[Theorem 3.9]{DJS}, since $\dom f$ is c.e., it has a
computable subset $B$ such that $\underline{\rho}(B) >
\underline{\rho}(\dom f)-\epsilon$. Let $h : \omega \to \omega \cup
\{\square\}$ be the total computable function defined by
\[
h(n)=
\begin{cases}
f(n) & \text{if } n \in B\\
\square & \text{if } n \notin B.
\end{cases}
\]
Then $h$ is a strong partial description of $g$ with strong domain of
lower density at least $d-\epsilon$. Since $\epsilon$ is arbitrary, we
have $\beta(g) \geq \alpha(g)$. But clearly $\alpha(g) \geq \beta(g)$,
so in fact $\alpha(g)=\beta(g)$.

Now suppose $g$ is weakly partially computable at density $d$, as
witnessed by the partial computable function $f$. Let $S=\{n :
f(n) = g(n)\}$ and let $\epsilon>0$. Again there is a computable
$B \subseteq \dom f$ such that $\underline{\rho}(B) >
\underline{\rho}(\dom f)-\epsilon$. Let $h$ be the total computable
function defined by
\[
h(n)=
\begin{cases}
f(n) & \text{if } n \in B\\
0 & \text{if } n \notin B.
\end{cases}
\]
Since $\underline{\rho}(S) \geq d$ and $S \subseteq \dom f$, we have
$\underline{\rho}(S \cap B) \geq d-\epsilon$. Since $S \cap B
\subseteq \{n : h(n)=g(n)\}$, it follows that $A$ is coarsely
computable at density $d-\epsilon$. Since $\epsilon$ is arbitrary, we
have $\gamma(g) \geq \delta(g)$. But clearly $\delta(g) \geq
\gamma(g)$, so in fact $\gamma(g)=\delta(g)$.
\end{proof}

This result also provides the last remaining implication in
Figure~\ref{fig:denseComputation}.

\begin{cor}
If $g$ is densely computable, then $\gamma(g)=1$.
\end{cor}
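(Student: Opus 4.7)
The plan is to derive this directly from Proposition~\ref{prop:abgd}, since the statement is essentially the $\delta$-vs-$\gamma$ case of that result instantiated at density $1$. First I would unpack the hypothesis: if $g$ is densely computable, then by definition there is a partial computable function $f$ with $\rho(\{n : f(n) = g(n)\}) = 1$, and in particular $\underline{\rho}(\{n : f(n) = g(n)\}) \geq r$ for every $r < 1$. Thus $g$ is weakly partially computable at density $r$ for every $r<1$, so $\delta(g) = 1$.

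Then I would invoke Proposition~\ref{prop:abgd}, which asserts $\gamma(g) = \delta(g)$, to conclude $\gamma(g) = 1$. There is no real obstacle here: the work was done in the proposition, where the partial description $f$ was converted to a total computable coarse description via a computable subset of $\dom f$ of nearly the same lower density. The corollary simply records what this conversion says when the weak partial density can be taken arbitrarily close to $1$.
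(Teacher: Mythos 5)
Your proof is correct and is exactly the argument the paper intends: the corollary is stated without proof, as an immediate consequence of Proposition~\ref{prop:abgd}, since dense computability gives $\delta(g)=1$ and the proposition yields $\gamma(g)=\delta(g)$. No differences worth noting.
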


To complete the justification of Figure~\ref{fig:denseComputation}, it
remains only to separate $\alpha=1$ from coarse and dense
computability. As shown by Jockusch and Schupp~\cite[proof of Proposition 2.15]{JS}, the
usual construction of a simple set can be modified to yield a simple
set $A$ of density $0$. Then $A$ is coarsely computable but
$\alpha(A)=0$. Thus coarse computability does not imply $\alpha=1$. We
now separate $\alpha=1$ from dense computability.

\begin{prop} \label{ndc}
There is a c.e.\@ set $A$ such that $\alpha(A)=1$ but $A$ is not densely
computable.
\end{prop}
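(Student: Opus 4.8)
Build a c.e. set $A$ with $\alpha(A)=1$ (so for every $\epsilon>0$ there's a partial computable partial description of $A$ with domain of lower density $>1-\epsilon$) but $A$ is not densely computable (no partial computable $f$ agrees with $A$ on a density-1 set).

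The plan is to use a finite-injury construction with requirements that simultaneously (a) keep $A$ c.e., (b) diagonalize against all potential dense descriptions $\varphi_e$ of $A$, and (c) arrange that for each $k$ there is a computable partial description of $A$ whose domain has lower density $\geq 1-2^{-k}$. For (c) the natural device is to split $\omega$ into intervals and let the construction "reserve'' for diagonalization only a sparse set of positions; more precisely, fix a computable partition of $\omega$ into consecutive blocks $I_0,I_1,\dots$ with $|I_n|$ growing fast (say $|I_n| = 2^{n}$ or similar), and within the $n$-th block devote only a small fraction $\leq 2^{-n}$ of positions to "active'' diagonalization for requirements of index $\leq n$, leaving all other positions permanently out of $A$. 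A position that is never made active can be safely described by the value $0$, so for each $k$ the computable function that outputs $0$ on all positions in blocks with index $\geq k$ that are not potentially-active, and is undefined elsewhere, is a partial description of $A$; by the bound on active positions per block its domain has lower density $\to 1$ as $k\to\infty$, giving $\alpha(A)=1$.

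For (b), the requirement $R_e$ says: if $\varphi_e$ is a dense description of $A$, we get a contradiction. We make $R_e$ act as follows. It is assigned its own infinite computable set $D_e$ of "active'' positions (drawn from the reserved positions, respecting the per-block budget so that $D_e$ has density $0$ and the union over $e\leq n$ stays within the block budget). $R_e$ waits for a position $x\in D_e$ on which $\varphi_e(x)\converges$, and then acts to make $A(x)\neq\varphi_e(x)$: if $\varphi_e(x)=0$ it enumerates $x$ into $A$ (if $\varphi_e(x)\neq 0$, $x$ is already out of $A$ and stays out, so nothing need be done). Since $A$ is c.e. we can only enumerate into $A$, which is why we set things up so that $D_e$ positions default to being outside $A$ and we only ever need to put them \emph{in}. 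Crucially there is \emph{no injury} here: once $R_e$ acts at $x$, the value $A(x)$ is fixed forever and the diagonalization at $x$ is permanent. Thus $R_e$ need only succeed once, and if $\varphi_e$ is total (or even just defined on a density-1 set) it must be defined on infinitely many elements of $D_e$ — wait, that's the wrong direction, since $D_e$ has density $0$. So instead: if $\varphi_e$ is a dense description of $A$, its domain has density $1$, hence meets every $D_e$ (which is infinite); in fact $\{x\in D_e : \varphi_e(x)\converges\}$ is co-dense-0 in $D_e$, so is nonempty — pick the least such $x$ found in the enumeration and diagonalize there. The point is that a \emph{single} successful diagonalization on one $x\in D_e$ suffices to make $\varphi_e$ fail to be a description of $A$ at all (not just fail to be dense), so density of $\dom\varphi_e$ is irrelevant beyond guaranteeing that some suitable $x$ is eventually found.

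The main obstacle, and the thing to be careful about, is the interaction between the budget for $\alpha(A)=1$ and the need for each $R_e$ to find a witness. We must ensure: each $D_e$ is infinite (so a density-1 set is guaranteed to hit it), the $D_e$ are pairwise disjoint, and $\bigcup_{e\leq n} D_e$ uses at most a $2^{-n}$-fraction of each sufficiently large block, so that the "default $0$'' description has domain of lower density $\geq 1-2^{-k+1}$ for suitable truncation. This is arranged by a standard bookkeeping: put the $(j{+}1)$-st element of $D_e$ into block number roughly $e+2j$, using one position per block, so block $I_n$ contains at most $\lfloor n/2\rfloor+1$ reserved positions total across all requirements, which is $o(|I_n|)$ once $|I_n|$ grows faster than linearly — e.g. $|I_n|=2^n$. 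Finally one checks the two conclusions: $A$ is c.e. by construction; $A$ is not densely computable because if $\varphi_e$ were a partial computable dense description of $A$ then $\dom\varphi_e$ is density $1$, hence contains some $x\in D_e$, and at that $x$ the construction guaranteed $A(x)\neq\varphi_e(x)$, contradicting that $\varphi_e$ is a partial description of $A$; and $\alpha(A)=1$ because for each $k$, letting $f_k$ output $0$ on every non-reserved position in blocks $I_n$ with $n\geq n_k$ (where $n_k$ is chosen so the reserved fraction is $<2^{-k}$ from then on) and be undefined elsewhere, $f_k$ is a computable partial description of $A$ with $\underline\rho(\dom f_k)\geq 1-2^{-k}$.
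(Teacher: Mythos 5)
There is a genuine gap, and it is fatal: you are diagonalizing against the wrong notion of description. A dense description of $A$ is only required to satisfy $\{n : \varphi_e(n)=A(n)\}$ has density $1$; it is allowed to be \emph{wrong} (not merely undefined) on a set of density $0$. So making $A(x)\neq\varphi_e(x)$ at a single witness $x\in D_e$ — or even at every point of $D_e$, since $D_e$ has density $0$ — does not prevent $\varphi_e$ from being a dense description of $A$. Your final verification step asserts that one successful diagonalization makes $\varphi_e$ ``fail to be a description of $A$ at all,'' but that only rules out $\varphi_e$ as a \emph{generic} description (errors of omission only); your construction proves the weaker, already-known fact that there is a c.e.\ set with $\alpha(A)=1$ that is not generically computable. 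In fact your $A$ is provably densely computable: by design all enumeration happens inside the reserved positions, which form a set of density $0$, so $A$ itself has density $0$ and the constant-$0$ function is a total computable dense (indeed coarse) description of it.

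The real tension in this proposition is that defeating dense computability forces $A$ to disagree with each candidate $\varphi_e$ on a set of positive upper density, so the diagonalization cannot be confined to a sparse set of positions — which is exactly what your $\alpha(A)=1$ bookkeeping requires. The paper resolves this differently: requirement $e$ owns the positive-density row $\{2^ek : k \text{ odd}\}$ (density $2^{-(e+1)}$), and whenever $\Phi_e$ outputs $0$ on at least half of the odd $k<m$ it dumps \emph{all} $2^ek$ with $k<m$ odd into $A$, so that either $\Phi_e$ is wrong on at least half of each such initial segment of the row infinitely often, or eventually $\Phi_e$ fails to equal $0=A$ on more than half of the row's initial segments; either way the agreement set misses a positive-upper-density set. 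The point that rescues $\alpha(A)=1$ despite diagonalizing on positive-density rows is that each row ends up either entirely inside $A$ or containing only finitely many elements of $A$, so $A$ restricted to rows $0,\dots,e_0$ is computable with finite advice, giving partial descriptions of $A$ with domains of density $1-2^{-(e_0+1)}$. If you want to salvage your write-up, you would need to replace the density-$0$ sets $D_e$ by positive-density ones and replace the one-point diagonalization by this kind of ``flood half the initial segment'' strategy, together with an argument of this eventual-triviality type to recover $\alpha(A)=1$.
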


\begin{proof}
Let $A$ be defined as follows. For each $e,m \in \omega$ such that
$\Phi_e(2^ek)=0$ for at least half of the odd numbers $k<m$, enumerate
$2^ek$ into $A$ for all odd numbers $k<m$. Then, for each $e \in
\omega$, either $2^ek \in A$ for all odd numbers $k$, or $2^ek \in A$
for only finitely many odd numbers $k$. From this fact it follows
easily that $\alpha(A)=1$.

Now fix $e \in \omega$. If there are infinitely many $m$ such that
$\Phi_e(2^ek)=0$ for at least half of the odd numbers $k<m$, then for
each such $m$, we have that $\Phi_e(2^ek) = 0 \neq 1 = A(2^ek)$ for at
least half of the odd numbers $k<m$, and hence $\Phi_e$ cannot be a
dense description of $A$. Otherwise, for all sufficiently large $m$,
for more than half of the odd numbers $k<m$, we have that
$\Phi_e(2^ek)$ either diverges or converges to a number different from
$0$, while $A(2^ek)=0$. Again, $\Phi_e$ cannot be a dense description
of $A$.
\end{proof}

We thus have a full justification of the implications and
nonimplications in Figure~\ref{fig:denseComputation}, but we can
actually say more. Jockusch and Schupp~\cite[Theorem 2.22]{JS} built a
c.e.\@ set of density $1$ with no computable subset of density
$1$. Such a set is both generically and coarsely computable, but is
not effectively densely computable. Thus effective dense computability
is strictly stronger than the conjunction of generic and coarse
computability. Paul Schupp [personal communication] showed that there
is a c.e.\@ set $C$ such that $C$ is densely computable but neither
coarsely computable nor generically computable, which implies that
dense computability is strictly weaker than the disjunction of generic
and coarse computability.    He obtained such a $C$ as $A \oplus B$, where
$A$ is coarsely computable but not generically computable and $B$ is generically
computable but not coarsely computable.    This ``join" method will be crucial in the proof
of the more
general Theorem \ref{boolean} below.     Subsequently but independently, Justin
Miller [personal communication] showed that there is a set $C
\leq\sub{T} \emptyset'$ such that $C$ is densely computable but
neither coarsely computable nor generically computable and,
furthermore, $\alpha(C) = 1$.

In fact, we will show in Theorem \ref{boolean} below that any Boolean combination
of the properties in Figure~\ref{fig:denseComputation} is realizable by a c.e.\@ set,
provided it is not directly ruled out by the implications given in the
figure. The following lemma, whose proof follows easily from the
definitions, is basic to the proof.

\begin{lem}
\label{join}
Let $P$ be any of the six properties in
Figure~\ref{fig:denseComputation}. Then for any sets $A$ and $B$,  the
property $P$
holds of $A \oplus B$ if and only if $P$ holds of both $A$ and $B$.
\end{lem}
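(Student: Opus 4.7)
The plan is to reduce all six cases to a single density-arithmetic observation about the interleaving of two subsets of $\omega$. Given $X \subseteq \omega$, set $X_e = \{n : 2n \in X\}$ and $X_o = \{n : 2n+1 \in X\}$. A direct count gives $|X \uhr 2n| = |X_e \uhr n| + |X_o \uhr n|$, hence $\rho_{2n}(X) = \tfrac{1}{2}(\rho_n(X_e) + \rho_n(X_o))$; after absorbing an $O(1/m)$ correction for odd arguments, $\rho_m(X) = \tfrac{1}{2}(\rho_{\lfloor m/2 \rfloor}(X_e) + \rho_{\lfloor m/2 \rfloor}(X_o)) + O(1/m)$. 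Combining this identity with the trivial bounds $\rho_n(X_e), \rho_n(X_o) \leq 1$ yields the two inequalities $\underline{\rho}(X) \geq \tfrac{1}{2}(\underline{\rho}(X_e) + \underline{\rho}(X_o))$ and $\underline{\rho}(X_e), \underline{\rho}(X_o) \geq 2\underline{\rho}(X) - 1$. In particular, $\underline{\rho}(X) = 1$ holds if and only if $\underline{\rho}(X_e) = \underline{\rho}(X_o) = 1$, and a lower-density bound of $1 - \epsilon/2$ for $X$ transfers to a lower-density bound of $1 - \epsilon$ for both $X_e$ and $X_o$.

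I would then run each property $P$ through both directions of this fact applied to the set relevant to $P$. For the direction $(\Leftarrow)$, given descriptions $f_A$ of $A$ and $f_B$ of $B$ of the type demanded by $P$, I form $f$ by $f(2n) = f_A(n)$ and $f(2n+1) = f_B(n)$. Then $f$ has the same type and computability as the $f_i$ (partial, total, or strong partial, with codomain $\omega$ or $\omega \cup \{\square\}$ as appropriate), and the ``relevant'' set for $f$, namely its domain (for $\hbox{gc}$), its strong domain (for $\hbox{edc}$), or its agreement set with $A \oplus B$ (for $\hbox{cc}$, $\hbox{dc}$, or the two bounds), is exactly the interleaving of the corresponding sets for $f_A$ and $f_B$; the first inequality above supplies the required density bound. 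For $(\Rightarrow)$, given $f$ for $A \oplus B$, I set $f_A(n) = f(2n)$ and $f_B(n) = f(2n+1)$; the same identification of relevant sets and the second inequality give the bound for $f_A$ and $f_B$.

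For the two properties $\alpha(\cdot) = 1$ and $\gamma(\cdot) = 1$ one quantifies over $\epsilon > 0$, combining $(1 - \epsilon)$\-/good descriptions on each side into a $(1 - \epsilon)$\-/good description of $A \oplus B$, and conversely extracting $(1 - \epsilon)$\-/good descriptions of $A$ and $B$ from a $(1 - \epsilon/2)$\-/good description of $A \oplus B$. No step here poses a genuine obstacle; as the excerpt notes, the proof follows easily from the definitions. The only spot meriting a moment of care is the quantitative $(\Rightarrow)$ direction for the computability-bound properties, which is precisely what the inequality $\rho_n(X_e) \geq 2\rho_{2n}(X) - 1$ (and its analogue for $X_o$) is designed to supply.
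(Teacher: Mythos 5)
Your proof is correct, and it is exactly the routine argument the paper has in mind: the paper omits the proof, remarking only that it "follows easily from the definitions," and your interleaving identity $\rho_{2n}(A\oplus B)=\tfrac12(\rho_n(A')+\rho_n(B'))$ for the relevant domain/agreement sets, together with the two liminf inequalities it yields, is the natural way to carry that out uniformly for all six properties. No gaps; the quantitative step $\rho_n(X_e)\geq 2\rho_{2n}(X)-1$ that you flag is indeed the only point needing any care, and you handle it correctly.
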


\begin{thm}   \label{boolean}
Let $V$ be the set of six vertices of the directed graph given in Figure~\ref{fig:denseComputation}.   For $p, q \in V$, define $q \leq p$ to mean that either $p = q$ or there is a directed path from $p$ to $q$ (so the property $p$ implies the property $q$).    Let $P$ and $Q$ be subsets of $V$.   Then the following statements are equivalent:

\begin{enumerate}[\rm (i)]
  
\item   There is a c.e.\@ set that satisfies every $p \in P$ and no $q
  \in Q$.

\item There is a set that satisfies every $p \in P$ and no $q
  \in Q$.

\item There is a function that satisfies every $p \in P$ and no $q
  \in Q$.
  
\item    There do not exist $p \in P$ and $q \in Q$ such that $q \leq
  p$.
  
\end{enumerate}
\end{thm}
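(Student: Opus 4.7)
The plan is to prove the cycle (i) $\Rightarrow$ (ii) $\Rightarrow$ (iii) $\Rightarrow$ (iv) $\Rightarrow$ (i). The first two implications are trivial, since every c.e.\@ set is a set and every set is the characteristic function of a function. For (iii) $\Rightarrow$ (iv), if $q \leq p$ with $p \in P$ and $q \in Q$, then by the definition of $\leq$ the property $p$ implies the property $q$, so any function satisfying $p$ also satisfies $q$, contradicting the existence of a function as in (iii).

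The substantive direction is (iv) $\Rightarrow$ (i). First I would replace $P$ by its downward closure $P^{*} = \{r \in V : r \leq p \text{ for some } p \in P\}$: a set satisfies $P$ iff it satisfies $P^{*}$, and condition (iv) is preserved under this replacement by transitivity of $\leq$. Assume henceforth that $P$ is downward closed. Then, iterating Lemma~\ref{join} and using that $Q \subseteq V$ is finite, it suffices to produce, for each $q \in Q$, a c.e.\@ set $B_q$ that satisfies every $p \in P$ and fails $q$; the join $\bigoplus_{q \in Q} B_q$ is then a c.e.\@ set witnessing (i).

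For each $q \in V$, set $U_q = \{p \in V : q \not\leq p\}$, the largest downward-closed subset of $V$ avoiding $q$. Any downward-closed $P$ with $q \notin P$ is contained in $U_q$, so it suffices to construct, for each $q \in V$, a c.e.\@ set whose property set is exactly $U_q$. Five of the six cases come from existing constructions: if $q$ is $\gamma=1$, take any c.e.\@ set with $\gamma < 1$; if $q$ is $\alpha=1$, take a c.e.\@ simple set of density $0$, which is cc but has $\alpha = 0$ since its complement is immune; if $q$ is dc, take the set from Proposition~\ref{ndc}; if $q$ is cc, take the c.e.\@ gc-not-cc set from \cite[Theorem 2.26]{JS}; and if $q$ is edc, take the density-$1$ c.e.\@ set with no computable subset of density $1$ from \cite[Theorem 2.22]{JS}, which is cc via the constant $1$ and gc via its own characteristic function but not edc.

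The remaining case, $q$ equal to gc, is the main obstacle: we need a c.e.\@ set that is cc and has $\alpha = 1$, yet fails to be gc. The standard cc-not-gc examples (density-$0$ simple sets) have $\alpha = 0$, while the Proposition~\ref{ndc} construction producing $\alpha = 1$ is not even dc, hence not cc. I would attack this with a direct priority construction that (a)~forces $A$ to agree with a fixed total computable background on a set of density tending to $1$, ensuring $\gamma(A) = 1$ and, via \cite[Theorem 3.9]{DJS}, providing c.e.\@ partial descriptions of $A$ with domain-density approaching $1$ to witness $\alpha(A) = 1$, and (b)~diagonalizes in the style of Proposition~\ref{ndc} against each partial computable $\Phi_e$ as a candidate generic description, guaranteeing that whenever $\Phi_e$ agrees with $A$ on its domain, $\underline{\rho}(\dom \Phi_e) < 1$. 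The delicate balancing act is to arrange the $\alpha = 1$ requirements so that the partial descriptions can approach density $1$ uniformly from below while the diagonalization maintains a positive gap against every single candidate; this seems plausible but requires careful bookkeeping.
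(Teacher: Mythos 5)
Your reduction is sound and essentially matches what the paper needs: after passing to the downward closure of $P$ (the paper instead passes to the maximal elements, which has the same effect) and iterating Lemma~\ref{join}, everything comes down to realizing, for each vertex $q$, a c.e.\@ set whose properties are exactly $U_q=\{p: q\not\leq p\}$, and five of your six witnesses are correct and are the same ones the paper uses. The problem is the sixth case, $q=\mathrm{gc}$, which you yourself flag as the main obstacle: here you do not produce the required c.e.\@ set that is coarsely computable with $\alpha=1$ but not generically computable, offering only a priority-construction sketch that is not carried out. Moreover, step (a) of that sketch rests on an incorrect inference: forcing $A$ to agree with a computable background on a set of density $1$ gives coarse computability, but neither $\gamma(A)=1$ nor coarse computability yields $\alpha(A)=1$ (the density-$0$ simple set is exactly a c.e.\@ set with $\gamma=1$ and $\alpha=0$), and \cite[Theorem 3.9]{DJS} applies to c.e.\@ sets such as domains of partial computable functions, not to the (generally non-c.e.) agreement set of $A$ with a computable function. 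So as written the sketch does not secure $\alpha(A)=1$, and the case remains open in your proof.

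No new construction is needed, however: the paper closes this case with a known example. Take a noncomputable c.e.\@ set $B$ and let $C=\{2^nk : n\in B \wedge k \text{ odd}\}$. Then $C$ is c.e., coarsely computable by \cite[Theorem 2.19]{JS}, not generically computable by \cite[Observation 2.11]{JS} (since $B$ is noncomputable), hence not effectively densely computable, and $\alpha(C)=1$ by the row argument as in Proposition~\ref{ndc}: given $\epsilon>0$, hard-code the finitely many rows $n<m$ with $2^{-m}<\epsilon$, and on rows $n\geq m$ output $1$ exactly when $n$ is enumerated into $B$; this is a partial description of $C$ whose domain has lower density at least $1-2^{-m}$. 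Substituting this witness for your sketched construction repairs the proof; apart from this, your argument goes through.
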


\begin{proof}
Clearly, (i) implies (ii), which in turn implies (iii), and it is
immediate from the previous results of this section that (iii) implies
(iv).

Now assume that (iv) holds in order to prove (i).  Let $M$ be the set
of maximal elements
of $P$ under $\leq$.  Then $P$ and $M$ have the same downward closure
under $\leq$.  It follows that the meaning of (iv) does not change if
$P$ is replaced by $M$.  Also, the meaning of (i) does not change if
$P$ is replaced by $M$ since, by the results of this section, the set
of elements of $V$ true of a given set $C$ is closed downward and,
again, $P$ and $M$ have the same downward closure.  Thus, we assume
without loss of generality that $P = M$.  Since $M$ is an antichain
and, by inspection of Figure~\ref{fig:denseComputation} all antichains
have size at most $2$, we have $|P| \leq 2$.  The result is easy if
either $P$ or $Q$ is empty, so we may assume that $|P| \in \{1,2\}$.

\textbf{Case 1}.   $|P| = 1$.   Let $P = \{p\}$.    For each $q \in Q$,
we do not have $q \leq p$, so by the previous results in this section
there is a c.e.\@ set $A_q$ that satisfies $q$ but not $p$.   Let $C =
\bigoplus_{q \in Q} A_q$. Then $C$ witnesses (i), by Lemma
\ref{join}.

\textbf{Case 2}.  $|P| = 2$.    Since $P$ is an antichain, it follows
from inspection of Figure~\ref{fig:denseComputation} that $P$ is one
of the following three sets: $\{cc, gc\}$, $\{cc, \alpha = 1\}$, and
$\{dc , \alpha = 1\}$.

\quad\textbf{Case 2a}.   $P = \{cc, gc\}$.   Then, by (iii), $Q =
\{edc\}$, so (i) asserts that there is a c.e.\@ set $C$ that is coarsely
computable and generically computable but not effectively densely
computable.    The existence of such a set $C$ follows from the
discussion two paragraphs before Proposition 2.1, i.e., let $C$ be a
c.e.\@ set of density $1$ that has no computable subset of density $1$.

\quad\textbf{Case 2b}.   $P = \{cc, \alpha = 1\}$.    Then, by (iii),
$Q \subseteq  \{edc, gc\}$.        Let $B$ be a noncomputable c.e.\@
set and let $C = \{2^nk : n \in B\, \wedge\,
k \textrm{ odd}\}$.   Then $C$ is c.e., $C$ is coarsely computable by Theorem 2.19
of \cite{JS}, and $\alpha(C) = 1$ as in the proof of Proposition
\ref{ndc}.    Also, since $B$ is not computable, $C$ is not
generically computable by Observation 2.11 of \cite{JS}.    Hence, $C$
is also not effectively densely computable, and so witnesses that (i)
holds.

\quad\textbf{Case 2c}.   $P = \{dc, \alpha = 1\}$. Then $Q \subseteq
\{cc, gc, edc\}$ by (iii).
As shown in the previous case, there is a c.e.\@ set $A$ that is coarsely
computable but not generically computable, and has $\alpha(A)=1$. As
mentioned above, Jockusch and Schupp~\cite[Theorem 2.26]{JS} showed
that there is a c.e.\@ set
$B$ that is generically computable but not coarsely computable. Note
that both $A$ and $B$ are densely computable, and $\alpha(B)=1$. Let
$C = A \oplus B$. By Lemma~\ref{join}, $C$ is densely
computable but neither generically computable nor coarsely computable,
and $\alpha(C) = 1$. It follows that $C$ is also not effectively
densely computable. So $C$ witnesses that (i) holds.

Thus (i) holds in all cases and the proof is complete.
\end{proof}

\section{Reducibilities related to asymptotic computation}
\label{sec:relativization}

Notions of asymptotic computability can be relativized. For instance,
a function is coarsely computable relative to an oracle $X$ if it has
an $X$\-/computable coarse description. While this process does not
yield transitive relations, we can also define transitive
reducibilities based on our notions, capturing the idea of using
partial information about a function to compute partial information
about another. In each case, there are both a nonuniform and a uniform
version, and there do not seem to be good reasons to prefer one over
the other in general. For coarse and effective dense computability,
the definitions are straightforward. 

\begin{defn}
We say that $f$ is \emph{nonuniformly coarsely reducible} to $g$, and
write $f \leq\sub{nc} g$, if every coarse description of $g$ computes
a coarse description of $f$.

We say that $f$ is \emph{uniformly coarsely reducible} to $g$, and
write $f \leq\sub{uc} g$, if there is a Turing functional $\Phi$ such
that if $d$ is a coarse description of $g$, then $\Phi^d$ is a coarse
description of $f$.

We say that $f$ is \emph{nonuniformly effectively densely reducible}
to $g$, and write $f \leq\sub{ned} g$, if every effective dense
description of $g$ computes an effective dense description of
$f$.

We say that $f$ is \emph{uniformly effectively densely reducible} to
$g$, and write $f \leq\sub{ued} g$, if there is a Turing functional
$\Phi$ such that if $d$ is an effective dense description of $g$, then
$\Phi^d$ is an effective dense description of $f$.
\end{defn}

Generic and dense descriptions are partial functions, so we cannot use
them directly as oracles, but we can use their graphs. It would not
make sense to use Turing functionals in defining generic and dense
reducibility, however, because our reductions should not be able to
use information about where partial functions are not
defined. (For instance, from the graph of a dense description $f$ of a
set $A$, we can compute an effective dense description of $A$, since
we can determine for any given $n$ whether $f(n)\converges$ simply by
checking whether $(n,0) \in \gra(f)$ or $(n,1) \in \gra(f)$.) Thus we
use enumeration operators, which can use only positive facts. Recall
that an \emph{enumeration operator} is a c.e.\@ collection of pairs
$(F,k)$ with each $F$ finite. For an oracle $X$,  let $W^X = \{k : (\exists (F,k) \in W)\, F \subseteq X\}$.    By definition,  $Y$ is \emph {enumeration reducible to} $X$ if there is an
enumeration operator $W$ such that $Y = W^X$.    For partial functions $f$ and $g$, we say that
$f$ is \emph{enumeration reducible to} $g$ if $\gra(f)$ is enumeration reducible to $\gra(g)$.
Similarly, for partial functions $f$ and $g$, if $W^{\gra(g)} = \gra(f)$, we identify
$W^{\gra(g)}$ with $f$.

\begin{defn}
We say that $f$ is \emph{nonuniformly generically reducible} to $g$,
and write $f \leq\sub{ng} g$, if for every generic description $d$ of
$g$, there is a generic description $e$ of $f$ such that 
$e$ is enumeration reducible to $d$.

We say that $f$ is \emph{uniformly generically reducible} to $g$, and
write $f \leq\sub{ug} g$, if there is an enumeration operator $W$ such
that if $d$ is a generic description of $g$, then $W^{\gra(d)}$ is a
generic description of $f$.

We say that $f$ is \emph{nonuniformly densely reducible} to $g$, and
write $f \leq\sub{nd} g$, if for every dense description $d$ of $g$,
there is a dense description $e$ of $f$ such that 
$e$ is enumeration reducible to $d$.

We say that $f$ is \emph{uniformly densely reducible} to $g$, and
write $f \leq\sub{ud} g$, if there is an enumeration operator $W$ such
that if $d$ is a dense description of $g$, then $W^{\gra(d)}$ is a
dense description of $f$.
\end{defn}

Each of the reducibilities above induces an equivalence relation on
$\omega^\omega$ (or on $2^\omega$, if we wish to consider sets rather
than functions), from which a degree structure arises as usual. We
will be concerned with the degrees of sets, but it is worth noting
that, unlike in the case of the Turing degrees, it is not clear that
the degrees of functions coincide with those of sets. (See
Open Question~\ref{q:degrees}.)

We also note that while nonimplications between notions of asymptotic
computability immediately yield nonimplications between the
corresponding reducibilities, the same is not true of
implications. Thus several of the relationships between these
reducibilities remain open. (See Open Question~\ref{q:reds}.) In the
coarse and generic cases, Dzhafarov and Igusa have shown that uniform
reducibility is strictly stronger than nonuniform reducibility
(see~\cite[Theorems 2.6 and 2.7]{HJKS}). In
Section~\ref{sec:embeddings} we will see that this fact is also true
in the dense and effective dense cases.

A different and often more convenient approach to the definition of
generic reducibility, which has been used in several papers, is to
use Turing functionals but replace partial descriptions with
time-dependent representations known as partial oracles. The same can
be done for dense reducibility.

\begin{defn}
Let $g : \omega \to \omega$. A \emph{partial oracle} for $g$ is a set
$P$ such that if $\langle n,k,s \rangle \in P$ then $g(n)=k$. A
\emph{coarse partial oracle} for $g$ is a set that is a partial oracle
for some coarse description of $g$. The \emph{domain} of a partial
oracle $P$ is the set of $n$ such that $\langle n,k,s \rangle$ is in
$P$ for some $k$ and $s$.

A \emph{generic oracle} for $g$ is a partial oracle for $g$ with
domain of density $1$. A \emph{dense oracle} for $g$ is a coarse
partial oracle for $g$ with domain of density $1$.
\end{defn}

It is sometimes technically convenient to add to the definition of a
partial oracle $P$ the condition that for each $n$ and $k$ there is at
most one $s$ with $\langle n,k,s \rangle \in P$, as is done
in~\cite{CI}, but this modification does not change any of the notions
defined using partial oracles.

It is easy to see that if $h \leq\sub{ug} g$ then there is a Turing
functional $\Phi$ such that if $P$ is a generic oracle for $g$, then
$\Phi^P$ is a generic oracle for $h$, and that the analogous facts
hold for $\leq\sub{ng}$, $\leq\sub{ud}$, and $\leq\sub{nd}$.
Conversely, Igusa~\cite[Proposition 3.10]{I1} (see also Cholak and
Igusa~\cite[Observation 2.9]{CI}) showed that if such a
$\Phi$ exists then
$h \leq\sub{ug} g$. (He stated this fact for generic descriptions of
sets rather than functions, but the argument works just as well in the
more general setting of functions.) We now show that the analogous
fact in the nonuniform case, which was left open in the above papers, also
holds.

\begin{prop}
\label{prop:gendef}
For all functions $g$ and $h$, we have $h \leq\sub{ng} g$ if and only if every
generic oracle for $g$ computes a generic oracle for $h$.
\end{prop}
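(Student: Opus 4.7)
I would prove the biconditional in two parts, handling the forward direction cleanly and the reverse direction via an enumeration-operator construction.

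Forward direction: assume $h \leq\sub{ng} g$, and let $P$ be a generic oracle for $g$. Define a partial function $d$ by $d(n) = k$ iff $(\exists s)\,\langle n,k,s\rangle \in P$. This is well-defined by the partial-oracle property, and is a generic description of $g$ since the domain of $P$ has density $1$; moreover $\gra(d)$ is c.e.\ in $P$. By $h \leq\sub{ng} g$, fix an enumeration operator $W$ and a generic description $e$ of $h$ with $\gra(e) = W^{\gra(d)}$. Then set $\langle n,k,s\rangle \in Q$ iff $(n,k)$ has been enumerated into $W^{\gra(d)}$ by stage $s$ of the $P$-effective simulation (in which we first enumerate $\gra(d)$ out of $P$ and then apply $W$). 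The resulting $Q$ is a partial oracle for $h$ whose domain equals $\dom(e)$, hence has density $1$, and it is computable from $P$.

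Reverse direction: assume every generic oracle for $g$ computes a generic oracle for $h$, and fix a generic description $d$ of $g$. Form the generic oracle $P_d = \{\langle n,k,s\rangle : (n,k) \in \gra(d)\}$ for $g$, and let $\Phi$ and $Q$ be a Turing functional and generic oracle for $h$, respectively, with $\Phi^{P_d} = Q$. I would define an enumeration operator $W$ by putting $(n,k) \in W^{X}$ iff there exist $s$ and a finite use $\sigma$ with $\Phi^\sigma(\langle n,k,s\rangle) = 1$ such that every positive query $\langle n',k',s'\rangle$ of $\sigma$ has $(n',k') \in X$, and every negative query $\langle n'',k'',s''\rangle$ of $\sigma$ is ``certified'' by some $(n'',\ell) \in X$ with $\ell \neq k''$. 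When $X = \gra(d)$, any such $\sigma$ is automatically consistent with $P_d$ (using single-valuedness of $d$), forcing $\Phi^{P_d}(\langle n,k,s\rangle) = 1$ and hence $\langle n,k,s\rangle \in Q$, so $h(n) = k$; thus $W^{\gra(d)}$ is the graph of a partial description $e$ of $h$.

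The main obstacle is verifying that $\dom(e)$ has density $1$. The certification restriction on $\sigma$ may disqualify uses that rely on genuinely negative information about $P_d$, namely negative queries $\langle n'',k'',s''\rangle$ where $n'' \notin \dom(d)$, since such queries cannot be confirmed from positive facts about $\gra(d)$. Since $\dom(d)$ has density $1$, the set of such ``problematic'' $n''$ has density $0$, and I would argue that the Turing computations of elements of $Q$ can be rerouted or combined so that any loss of witnesses occurs on a density-$0$ set of inputs $n$. I expect this density-preservation argument to be where the real work of the proposition lies.
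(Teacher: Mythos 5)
Your forward direction is fine, and the correctness half of your reverse direction also goes through: with the canonical oracle $P_d$, positive queries certified by $\gra(d)$ and negative queries certified by a conflicting value do force $\Phi^{P_d}(\langle n,k,s\rangle)=1$, hence $h(n)=k$. But the gap you flag at the end is not a technicality to be patched later; it is the entire content of this direction, and your sketch does not supply it. The problem is that $\Phi^{P_d}$ may genuinely use negative information of the uncertifiable kind: nothing prevents every convergent computation $\Phi^{P_d}(\langle n,k,s\rangle)$ from querying one and the same position $\langle n_0,k_0,s_0\rangle$ with $n_0\notin\dom d$, in which case your operator $W$ certifies no computation at all and $W^{\gra(d)}$ may be empty even though $\dom\Phi^{P_d}$ has density $1$. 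The ``rerouting'' you hope for has no basis: the hypothesis only provides, for each particular generic oracle, some functional that happens to work on it, so there are no alternative computation paths to reroute to, and you cannot pass to a different oracle without losing the tie to $\gra(d)$, which is all an enumeration operator can see.

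The paper's proof avoids this by not using the canonical oracle $P_d$ at all. Instead it builds the oracle by forcing: call $\sigma\in 2^{<\omega}$ \emph{acceptable} if $\sigma(\langle n,i,s\rangle)=1$ implies $d(n)=i$ (only positive entries are constrained; zeros are arbitrary), and let $G$ be sufficiently generic for this partial order. Then $G$ is a generic oracle for $g$ with $\dom G\supseteq\dom d$, and genericity gives a condition $\tau\prec G$ below which correctness is \emph{forced}: for every acceptable $\sigma\succ\tau$, if $\Phi^\sigma(\langle m,j,t\rangle)=1$ then $h(m)=j$. The enumeration operator then puts $(F,(m,j))$ in $W$ whenever some $\sigma\succ\tau$ whose positive entries are covered by $F$ yields such a convergence, ignoring negative queries entirely; this is sound because when $F\subseteq\gra(d)$ any such $\sigma$ is automatically acceptable, and it preserves density because the initial segments of $G$ extending $\tau$ witness every element of $\dom\Phi^G$. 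That forcing step---choosing the oracle generically so that a single condition $\tau$ makes all negative information irrelevant---is exactly the missing idea in your argument.
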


\begin{proof}
As mentioned above, the ``only if'' direction is clear. Suppose that
every generic oracle for $g$ computes a generic oracle for $h$ and fix
a generic description $f$ of $g$.
  
Say that $\sigma \in 2^{<\omega}$ is \emph{acceptable} if $\sigma(\langle
n,i,s \rangle)=1 \Rightarrow f(n) = i$. We force over the
acceptable strings ordered by extension. Let $G$ be sufficiently
generic for this notion of forcing. If $\sigma \prec G$ then $\sigma$
is acceptable, so if $\langle n,i,s \rangle \in G$ then
$f(n) = i$, and hence $g(n)=i$. Thus $G$ is a partial oracle
for $g$. If $f(n)= i$ then the set of acceptable $\sigma$
such that $\sigma(\langle n,i,s \rangle)=1$ for some $s$ is dense, and
hence $\langle n,i,s \rangle \in G$ for some $s$. Thus $\dom G \supseteq \dom f$,
and hence $\rho(\dom G)=1$.

Since $G$ is a generic oracle for $g$, there is a Turing functional $\Phi$
such that $\Phi^G$ is a generic oracle for $h$. Consider the set of
acceptable $\sigma$ such that there exist $m$, $j$, and $t$ with $\Phi^\sigma(\langle m,j,t
\rangle) = 1$ and $h(m) \neq j$. No initial segment of $G$
can be in this set, so $G$ must avoid this set. That is, there is a
$\tau \prec G$ such that for all acceptable $\sigma \succ \tau$, if
$\Phi^\sigma(\langle m,j,t \rangle)= 1$ then $h(m)=j$.

Now define an enumeration operator $W$ as follows. For each finite $F \subset \omega \times
\omega$, if there is a $\sigma \succ \tau$ (with $\sigma$ not
necessarily acceptable) such that $\sigma(\langle n,i,s \rangle)=1
\Rightarrow (n,i) \in F$, and $\Phi^\sigma(\langle m,j,t
\rangle) = 1$, then add the pair $(F,(m,j))$ to $W$.

Let $d=W^{\gra(f)}$.  If $F \subset \gra(f)$ is finite, then any
$\sigma$ as in the previous paragraph is acceptable, so by the choice
of $\tau$, if $(m,j) \in d$ then $h(m)=j$. In particular, $d$ is single-valued, and
we identify it with the partial function of which it is the graph.   Thus to show that $d$ is a
generic description of $h$, it suffices to show that $\rho(\dom
d)=1$. If $m \in \dom \Phi^G$ then there is a $\sigma$ with $\tau
\prec \sigma \prec G$ such that $\Phi^\sigma(\langle m,j,t
\rangle) = 1$ for some $j,t$. Since $\sigma$ is acceptable,
there is a finite $F \subset \gra(f)$ such that $(F,(m,j)) \in W$, so
$m \in \dom d$. Thus $\dom d \supseteq \dom \Phi^G$, and hence
$\rho(\dom d)=1$.
\end{proof}

Thus in both the uniform and nonuniform cases, the two ways to define
generic reducibility agree. The above proof can easily be adapted to
certain other reducibilities, such as the infinite-information
reducibility introduced by Dzhafarov and
Igusa~\cite{DI}. Unfortunately, this is not the case for dense
reducibility, as there seems to be no good analog to the existence of
$\tau$ in that proof. Indeed, the aforementioned proofs in~\cite{I1}
and~\cite{CI}
for the uniform case also seem to fail for dense reducibility. Thus
we do not know whether defining dense reducibility using dense oracles
and Turing functionals would yield the same notions. (See Open
Question~\ref{q:dense}.) Fortunately, the
results in this paper do not depend on which version of dense
reducibility we choose.

\section{Cofinite reducibilities}
\label{sec:cofinite}

As shown by  Jockusch and Schupp \cite{JS}, Dzhafarov and
Igusa~\cite{DI}, and Hirschfeldt, Jockusch,
Kuyper, and Schupp~\cite{HJKS}, there are natural embeddings of the
Turing degrees into the coarse and generic degrees. As we will see in
the next section, the same process works for the dense and effective
dense degrees as well, by arguments similar to the ones in these
papers. We will follow the version in~\cite{DI}, which filters through
the notions of cofinite and mod-finite reducibility. In our case, we
need to introduce two variants on those notions.

\begin{defn}
A \emph{cofinite description} of a function $g$ is a partial
description of $g$ with cofinite domain. A function $h$ is
\emph{cofinitely reducible} to a function $g$, written as $h
\leq\sub{cf} g$, if there is an enumeration operator $W$ such that if
$f$ is a cofinite description of $g$ then $W^{\gra(f)}$ is the graph of
a cofinite description of $h$.

A \emph{mod-finite description} of a function $g$ is a function $f$
such that $f(n)=g(n)$ for almost all $n$. A function $h$ is
\emph{mod-finitely reducible} to a function $g$, written as $h
\leq\sub{mf} g$, if there is a Turing functional $\Phi$ such that if
$f$ is a mod-finite description of $g$ then $\Phi^f$ is a mod-finite
description of $h$.

A \emph{weak cofinite description} of a function $g$ is a cofinite
description of a mod-finite description of $g$, i.e., a partial
function $f$ such that $f(n) = g(n)$ for almost all $n$. A
function $h$ is \emph{weak-cofinitely reducible} to a function $g$,
written as $h \leq\sub{wcf} g$, if there is an enumeration operator
$W$ such that if $f$ is a weak cofinite description of $g$ then
$W^{\gra(f)}$ is the graph of a weak cofinite description of $h$.

A \emph{strong cofinite description} of a function $g$ is a strong
partial description of $g$ with cofinite strong domain. A function $h$
is \emph{strong-cofinitely reducible} to a function $f$, written as
$h \leq \sub{scf} g$, if there is a Turing functional $\Phi$ such that
if $f$ is a strong cofinite description of $g$ then $\Phi^f$ is a
strong cofinite description of $g$.
\end{defn}

In~\cite{DI}, the definitions of mod-finite and cofinite reducibility
also required that $\Phi^g=h$, but as shown in that paper, dropping
this condition does not affect either definition.  Notice that the
nonuniform version of each of these reducibilities is just Turing
reducibility. These reducibilities induce degree structures in the
usual way.

A \emph{cofinite oracle} for a function $g$ is a partial oracle for
$g$ with cofinite domain. The original definition of cofinite
reducibility in~\cite{DI} is in terms of cofinite oracles and Turing
functionals, but essentially the same argument that shows that uniform
generic reducibility is equivalent to the version defined using
generic oracles shows that $h \leq\sub{cf} g$ if and only if there is a Turing
functional $\Phi$ such that if $P$ is a cofinite oracle for $g$ then
$\Phi^P$ is a cofinite oracle for $h$. We do not know whether the
analogous fact holds  for weak cofinite reducibility, however. (See
Open Question~\ref{q:dense}.)

Our main interest in wcf\-/reducibility and scf\-/reducibility here is
as tools in the study of dense and effective dense reducibility, but
it is of independent interest to consider how they fit into the
picture of measures of robust information coding in~\cite{DI}. To begin
to answer this question, we define another notion introduced
in~\cite{DI}.

\begin{defn}
We say that $h$ is \emph{use-bounded-from-below reducible} to $g$, and
write $h \leq\sub{ubfb} g$, if $h$ is computable from $g$ via a
functional $\Phi$ such that the smallest value of $g$ queried in
the computation of $\Phi^g(n)$ goes to infinity with $n$.
\end{defn}

Dzhafarov and Igusa~\cite[Theorem 3.1]{DI} showed that, in Cantor space,
mf\-/reducibility strictly implies ubfb\-/reducibility, which in turn
strictly implies cf\-/reducibility. Their proofs work in Baire space
as well, except for the implication between mf\-/reducibility and
ubfb\-/reducibility. Indeed, we have the following fact.

\begin{prop}
There are a function $g$ and a set $B$ such that $B \leq\sub{mf} g$
but $B \nleq\sub{cf} g$, and hence $B \nleq\sub{ubfb} g$.
\end{prop}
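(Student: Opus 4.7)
The plan is to construct $g \in \omega^\omega$ and a set $B$ with $B \leq\sub{mf} g$ but $B \nleq\sub{cf} g$, exploiting the unbounded range of values in Baire space.

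The starting structural observation is that $\leq\sub{cf}$ is more constrained than it first appears. If $B \leq\sub{cf} g$ via an enumeration operator $W$, then for every $m$, the set $\gra(g) \setminus \{(m, g(m))\}$ is the graph of a cofinite description of $g$, so $W^{\gra(g) \setminus \{(m, g(m))\}}$ must still be a cofinite description of $B$. Consequently, for every $m$ and cofinitely many $n$, there is a finite witness $F \subseteq \gra(g)$ with $(F, (n, B(n))) \in W$ and $(m, g(m)) \notin F$, i.e., the witnesses of $W$ must be ``use-bounded-from-below'' in a strong sense. An mf-reduction, obtained as a Turing functional on a total description, carries no analogous spreading constraint, and in Baire space one can arrange its queries to concentrate on specific ``master'' positions.

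Let $B$ be a noncomputable c.e.\@ set. I would build $g$ by a priority construction along requirements $R_e$: the $e$-th enumeration operator $W_e$ does not witness $B \leq\sub{cf} g$. To satisfy $R_e$, we arrange either that $W_e^{\gra(g)}$ already fails to be a cofinite description of $B$ (by making its projection onto first coordinates omit infinitely many $n$, or by forcing a single-valuedness failure), or that $W_e$'s witness scheme concentrates on some position $m^*_e$, so that removing $(m^*_e, g(m^*_e))$ from the cofinite description breaks the purported reduction. Concurrently, we preserve $B \leq\sub{mf} g$ by maintaining a uniform encoding of $B$ into $g$ decodable by a fixed Turing functional; the unboundedness of $g(n)$ in Baire space provides the slack needed to add adversarial structure at each stage without disrupting this decoding.

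The main obstacle is the apparent tension: any straightforward local encoding of $B(n)$ into $g(n)$ (such as $B(n) = g(n) \bmod 2$) immediately yields a cf-reduction via the mirror enumeration operator reading parity. Resolving this requires the encoding to be ``contextual,'' using the total function structure available to a Turing functional but inaccessible to a graph-enumeration operator after a strategic cofinite restriction. I expect the hard work in the construction to lie in leveraging Baire space's freedom to use arbitrarily large values for $g(n)$ --- together with the priority framework --- to inject adversarial noise at non-coding positions that disrupts each $W_e$ in turn, while preserving the fixed mf-decoding scheme globally. The final clause ``hence $B \nleq\sub{ubfb} g$'' follows immediately from the implication $\leq\sub{ubfb}\,\Rightarrow\,\leq\sub{cf}$ established in~\cite{DI}.
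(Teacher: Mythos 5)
Your structural observation about cf\-/reducibility (deleting any single point $(m,g(m))$ from $\gra(g)$ still yields a cofinite description, so witnesses must spread out) is correct, and the ``hence'' clause does follow from ubfb $\Rightarrow$ cf in Baire space. But there is a genuine gap: the actual construction --- the ``contextual'' encoding of a fixed noncomputable c.e.\@ set $B$ into $g$ that survives mod-finite corruption yet defeats every enumeration operator --- is never specified; you explicitly defer it (``I expect the hard work \dots to lie in \dots''). This is not a routine detail. As your own parity example shows, any encoding in which $B(n)$ is recoverable from finitely many values of $g$ by a monotone procedure hands you a cf\-/reduction, and the same happens for redundant codings (e.g.\@ letting $g(n)$ code $B\upharpoonright n$). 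To defeat a single operator by removing one point $(m^*,g(m^*))$ you need infinitely many bits of $B$ to be recoverable \emph{only} through $g(m^*)$ (removing a point can only shrink the operator's output, so the failure must be an infinite omission, not an error); but then your mf\-/functional must recover those same infinitely many bits from a total description that may be wrong exactly at $m^*$, while erring on only finitely many outputs. Reconciling these two demands for a pre-given $B$ is exactly the unresolved heart of your plan, and insisting that $B$ be c.e.\@ makes the task strictly harder than what the statement asks.

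The paper dissolves the tension by not fixing $B$ in advance: it builds $g$ by finite extensions and sets $B=\{\langle n,k\rangle : g(n)=k\}$, the graph of $g$ coded as a set. Then $B \leq\sub{mf} g$ is automatic (a mod-finite description of $g$ decides $\gra(g)$ with only finitely many errors), and each column $\{\langle n,k\rangle : k\in\omega\}$ of $B$ consists of infinitely many bits controlled by the single value $g(n)$ --- precisely what the one-point-deletion strategy needs. The diagonalization is then a plain finite-extension argument, one enumeration operator $W$ at a time: with $g$ defined on $[0,n]$, either some finite partial function consistent with $g$ and undefined at $n+1$ makes $W$ enumerate a value $i<2$ at some $\langle n+1,k\rangle$, in which case one extends $g$ through that function and chooses $g(n+1)$ to falsify the guess, so $W^{\gra(g)}$ is not the graph of any partial description of $B$; or no such function exists, in which case one sets $g(n+1)=0$ and the cofinite description obtained from $g$ by omitting $n+1$ yields an output assigning no value in $\{0,1\}$ to any point of the infinite column $n+1$, hence not a cofinite description of $B$. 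No priority machinery and no separate coding scheme are needed. To repair your proposal you would either have to actually produce and verify your encoding for a fixed c.e.\@ $B$ (an open-ended task not required by the statement) or, more simply, let $B$ be defined from $g$ as above.
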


\begin{proof}
We will define a function $g$ and let $B=\{\langle n,k \rangle :
g(n)=k\}$. Then it is easy to see that $B \leq\sub{mf} g$.

We define $g$ by finite extensions as follows. Suppose we have defined
$g$ on $[0,n]$ and want to ensure that $B \nleq\sub{cf} g$ via a given
enumeration operator $W$.  Let $\mathcal{C}$ be the class of all
partial functions with finite domain that agree with $g$ on $[0, n]$
and are undefined at $n+1$.

\textbf{Case 1.} There are a $k
\in \omega$, an $i<2$, and an $f \in \mathcal C$ such that $(\langle
n+1,k \rangle, i) \in W^{\gra(f)}$.    Let $g(n) = f(n)$ on the domain of $f$.
Also, if $i=0$
then let $g(n+1)=k$, and otherwise let $g(n+1)=k+1$. This definition is
consistent since $n + 1$ is not in the domain of $f$, and it
ensures that $B(\langle n+1,k \rangle) \neq i$.  Finally, extend the definition
of $g$ so that the current domain of $g$ is a finite initial segment of the
natural numbers.      Then $W^{\gra(g)}$ contains $(\langle
n+1,k \rangle, i)$ and so is not of the form $\gra(h)$ for any partial description
$h$ of $B$.

\textbf{Case 2.} Otherwise. Then we simply set $g(n+1) = 0$. If $g_0$
is the partial function obtained from (the final version of) $g$ by
omitting $n+1$ from its domain, then $g_0$ is a cofinite description
of $g$ such that $W^{\gra(g_0)}$ contains no number of the form
$(\langle n+1,k \rangle, i)$ for $i<2$. Thus if $W^{\gra(g_0)}$ is the graph of
a partial function $h$, then $h$ cannot be defined as either $0$ or
$1$ on any argument of
the form $\langle n+1,k \rangle$, and hence $h$ cannot be a cofinite
description of any set.
\end{proof}

We also have the following implications.

\begin{prop}
If $h \leq\sub{scf} g$ then $h \leq\sub{ubfb} g$.
\end{prop}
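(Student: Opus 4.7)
The plan is to exploit the freedom in a strong cofinite description to ``blank out'' an arbitrary initial segment of $g$. Let $\Phi$ be a Turing functional witnessing $h \leq\sub{scf} g$. For each $M \in \omega$, define $f_M : \omega \to \omega \cup \{\square\}$ by $f_M(m) = \square$ for $m < M$ and $f_M(m) = g(m)$ for $m \geq M$. Each $f_M$ is a strong cofinite description of $g$, so by hypothesis $\Phi^{f_M}$ is a strong cofinite description of $h$; in particular it is total, its $\square$-set $E_M := (\Phi^{f_M})^{-1}(\square)$ is finite, and $\Phi^{f_M}(n) = h(n)$ for every $n \notin E_M$.

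I would then build a functional $\Psi$ computing $h$ from $g$ via a downward search. Let $D := E_0 = \{n : \Phi^g(n) = \square\}$, which is finite. For $n \in D$, hardcode $h(n)$ into $\Psi$ (no oracle calls). For $n \notin D$, on input $n$ the machine $\Psi^g(n)$ simulates $\Phi^{f_M}(n)$ for $M = n, n-1, \ldots, 0$ in turn, answering any query made by $\Phi$ to $f_M(m)$ by $\square$ whenever $m < M$ (without consulting $g$) and by $g(m)$ whenever $m \geq M$; the search halts and returns $v$ as soon as some simulation outputs a value $v \in \omega$.

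Correctness is immediate: each simulation halts because $\Phi^{f_M}$ is total; for $n \notin D$, at worst the simulation at $M = 0$ returns $\Phi^g(n) = h(n) \in \omega$, so the search terminates; and any non-$\square$ output of $\Phi^{f_M}$ must equal $h$ by the definition of a strong partial description. For the use, let $M^*(n)$ denote the largest $M \leq n$ with $\Phi^{f_M}(n) \in \omega$; the simulations actually carried out use parameters $M \in \{M^*(n), \ldots, n\}$, and each queries $g$ only at arguments $\geq M \geq M^*(n)$. Thus the smallest argument at which $\Psi^g(n)$ queries $g$ is at least $M^*(n)$ (and there are no queries at all for the finitely many $n \in D$).

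The only step requiring thought is verifying $M^*(n) \to \infty$, and this is where finiteness of each $E_M$ is used. Given any $K$, pick any $n$ with $n \geq K + 1$ and $n \notin E_{K+1}$; then $\Phi^{f_{K+1}}(n) \in \omega$, so $M^*(n) \geq K + 1 > K$. Since $E_{K+1}$ is finite for every $K$, this shows $M^*(n) > K$ for all but finitely many $n$, so $M^*(n) \to \infty$ as required, giving $h \leq\sub{ubfb} g$.
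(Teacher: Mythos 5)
Your proposal is correct and follows essentially the same route as the paper: blank out an initial segment to form the oracles $f_M$, search downward for the largest $M$ with a non-$\square$ output (which bounds the use from below), and use finiteness of each $\square$-set $E_M$ to show this bound tends to infinity. The only cosmetic difference is that you hardcode $h$ on the finite set $E_0$, whereas the paper finitely modifies $\Phi$ so that $\Phi^g = h$.
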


\begin{proof}
Suppose that $h \leq\sub{scf} g$ via $\Phi$. By finitely modifying
$\Phi$ if necessary, we may assume that $\Phi^g=h$. Let $g_n$ be
defined by letting $g_n(m)=\square$ for $m<n$ and $g_n(m)=g(m)$ for $m
\geq n$. Note that $\Phi^{g_n}(k)\converges \in \{h(k),\square\}$ for
all $n$ and $k$. We can compute $h$ from $g$ as follows. On input $k$,
find the largest $n \leq k$ such that $\Phi^{g_n}(k) \neq \square$
(which must exist, since $\Phi^{g_0}=\Phi^g=h)$. Then
$h(k)=\Phi^{g_n}(k)$. This computation can be performed without
querying $g(m)$ for any $m<n$. For each $n$, we have that $\Phi^{g_n}$
is a strong cofinite description of $h$, so $\Phi^{g_n}(k) \neq
\square$ for almost all $k$. Thus the above procedure has
use bounded from below.
\end{proof}
  
\begin{prop}
If $h \leq\sub{ubfb} g$ then $h \leq\sub{wcf} g$.
\end{prop}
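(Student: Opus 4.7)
The plan is to use the most direct enumeration operator one could hope for. Given the Turing functional $\Phi$ witnessing $h \leq\sub{ubfb} g$, let $W$ consist of all pairs $(F_0,(n,v))$, where $F_0$ ranges over finite partial functions from $\omega$ to $\omega$ (identified with their graphs), such that $\Phi^{F_0}(n) \converges = v$ via a computation whose oracle queries all lie in $\dom(F_0)$. This $W$ is plainly c.e., and I will verify that for any weak cofinite description $f$ of $g$, the set $W^{\gra(f)}$ is the graph of a weak cofinite description of $h$.

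The crucial observation, which dissolves what initially looks like the main obstacle (namely, that different witnesses $F_0 \subseteq \gra(f)$ might assign incompatible values $v$ to a single $n$, so that $W^{\gra(f)}$ fails to be the graph of a function), is that every witness in $W^{\gra(f)}$ really records the same underlying computation of $\Phi$ using $f$ as a partial oracle. Indeed, if $F_0 \subseteq \gra(f)$ is finite and $(F_0,(n,v)) \in W$, then the queries $q_1,\ldots,q_m$ of $\Phi^{F_0}(n)$ all lie in $\dom(F_0) \subseteq \dom(f)$ and satisfy $F_0(q_i) = f(q_i)$. A straightforward induction on query number shows that running $\Phi$ on $n$ with $f$ treated as a partial oracle produces exactly the same sequence of queries and observed values, and hence halts with output $v$. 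Conversely, if $\Phi^f(n) \converges = v$, then the finitely many pairs $(q_i,f(q_i))$ seen in that computation form a witness in $W$. Thus $(n,v) \in W^{\gra(f)}$ if and only if $\Phi^{f}(n) \converges = v$, so $W^{\gra(f)}$ is precisely the graph of the partial function $\Phi^f$.

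It remains to check that $\Phi^{f}$ agrees with $h$ on a cofinite set. Let $E = \{k : f(k) \text{ is undefined or } f(k) \ne g(k)\}$; because $f$ is a weak cofinite description of $g$, the set $E$ is finite. Writing $\mu(n)$ for the smallest query made by $\Phi^{g}(n)$, the hypothesis $h \leq\sub{ubfb} g$ gives $\mu(n) \to \infty$, so $\mu(n) > \max(E)$ for all but finitely many $n$. For each such $n$, every query of $\Phi^{g}(n)$ lies in $\omega \setminus E$, where $f$ and $g$ agree, so $\Phi^{f}(n)$ reproduces the same computation as $\Phi^{g}(n)$ and returns $h(n)$. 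This puts $\Phi^{f}(n) = h(n)$ on a cofinite set, so $W^{\gra(f)}$ is the graph of a weak cofinite description of $h$, as required.
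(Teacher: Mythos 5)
Your proof is correct and is essentially the paper's argument: the same enumeration operator that simulates $\Phi$ using a finite piece of $\gra(f)$ as oracle answers, the same observation that all witnesses reproduce one deterministic computation relative to $f$ (the paper phrases this via the underlying mod-finite description $d$, so single-valuedness follows from $W^{\gra(f)}\subseteq\gra(\Phi^d)$), and the same use of the use-bounded-from-below condition to get agreement with $h$ on a cofinite set.
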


\begin{proof}
Suppose that $h \leq\sub{ubfb} g$ via $\Phi$. Define an enumeration
operator $W$ as follows. For each finite $F \subset \omega \times
\omega$ and each $n$, run the computation of $\Phi$ on input $n$, and
each time this computation queries its oracle on an input $m$, check
whether $(m,j) \in F$ for some $j$. If not, or if this is the case for
more than one $j$, then do nothing. Otherwise, provide the answer $j$
to $\Phi$'s query, and continue the computation. If this computation
ever halts with an output $i$, then enumerate the pair $(F,(n,i))$
into $W$.

Let $f$ be a weak cofinite description of $g$. Then $f$ is a cofinite
description of some mod-finite description $d$ of $g$. If $(F,(n,i))
\in W$ for some $F \subset \gra(f)$ then $\Phi^d(n)=i$, so
$W^{\gra(f)}$ is the graph of a partial function. Since $\Phi$ has its
use bounded from below on oracle $g$, if $n$ is sufficiently large
then there is a finite $F \subset \gra(f)$ such that all answers
provided to $\Phi$ during the simulation described above are the same
as $\Phi$ would get from $g$, and hence $(n,h(n)) \in
W^{\gra(f)}$. Thus $W^{\gra(f)}$ is a weak cofinite description of
$h$.
\end{proof}

\begin{cor}
\label{cor:cofinite}
For functions $g$ and $h$, if $h \leq\sub{scf} g$, then $h
\leq\sub{cf} g$ and $h \leq\sub{wcf} g$. For sets $A$ and $B$, if $B
\leq\sub{mf} A$, then $B \leq\sub{cf} A$ and $B \leq\sub{wcf} A$.
\end{cor}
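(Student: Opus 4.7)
The plan is to chain together the two propositions immediately preceding this corollary with the Dzhafarov--Igusa implications recalled in the paragraph before those propositions, namely that mf-reducibility implies ubfb-reducibility in Cantor space, and that ubfb-reducibility implies cf-reducibility in both Cantor and Baire space.

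For the function case, I would start with the assumption $h \leq\sub{scf} g$. Applying the first of the two preceding propositions gives $h \leq\sub{ubfb} g$, and the second then yields $h \leq\sub{wcf} g$. For the remaining conclusion $h \leq\sub{cf} g$, I would invoke the ubfb-to-cf implication from~\cite[Theorem 3.1]{DI}, which is valid in Baire space as noted in the excerpt.

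For the set case, I would start from $B \leq\sub{mf} A$ and apply the full Cantor-space chain from~\cite[Theorem 3.1]{DI} to conclude both $B \leq\sub{ubfb} A$ and $B \leq\sub{cf} A$. The second preceding proposition then turns $B \leq\sub{ubfb} A$ into $B \leq\sub{wcf} A$.

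There is no substantive obstacle, since each step is a direct application of an already-established implication; the only thing to watch is that the mf-to-ubfb implication requires Cantor space, which is exactly the setting of the set half of the statement, while the function half only needs the ubfb-to-cf implication, which is the half that transfers to Baire space.
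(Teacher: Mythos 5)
Your proposal is correct and matches the paper's (implicit) argument: the corollary is obtained exactly by chaining the two preceding propositions ($\mathrm{scf}\Rightarrow\mathrm{ubfb}$ and $\mathrm{ubfb}\Rightarrow\mathrm{wcf}$) with the Dzhafarov--Igusa implications, using $\mathrm{ubfb}\Rightarrow\mathrm{cf}$ in Baire space for the function half and $\mathrm{mf}\Rightarrow\mathrm{ubfb}\Rightarrow\mathrm{cf}$ in Cantor space for the set half. Your care about which implication holds in which space is exactly the right point to watch, and you handle it correctly.
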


Notice also that both wcf\-/reducibility and scf\-/reducibility are
implied by $1$\-/reducibility (for sets) and imply Turing
reducibility.  We do not know whether either of the implications in
the above propositions is strict, or what the relationships between
mf\-/reducibility and scf\-/reducibility, and between
cf\-/reducibility and wcf\-/reducibility are. (See Open
Question~\ref{q:cf}.)

\section{Embeddings of the Turing degrees and quasiminimality}
\label{sec:embeddings}

We can now define embeddings of the Turing degrees into each of our
asymptotic reducibilities. Since every function is Turing equivalent
to a set, we can work in Cantor space without loss of generality. Let
\[
\mathcal R(A)=\{2^nk : n \in A \, \wedge \, k \textrm{ odd}\}.
\]
Taking $J_n=[2^n,2^{n+1})$, define
\[
\widetilde{\mathcal R}(A) = \bigcup_{n\in A} J_n.
\]
Finally, let $\mathcal E(A) = \widetilde{\mathcal R}(\mathcal R(A))$.

Dzhafarov and Igusa~\cite[Proposition 3.3]{DI} showed that $\mathcal
R$ induces
embeddings of the Turing degrees into both the mod-finite and the
cofinite degrees, and that $\widetilde{\mathcal R}$ induces
embeddings of the cofinite degrees into the uniform generic degrees
and of the mod-finite degrees into the uniform coarse degrees. A
similar but easier argument shows that the latter map also induces
embeddings of the Turing degrees into both the nonuniform generic and
the nonuniform coarse degrees, since $\widetilde{\mathcal R}(A)
\equiv\sub{T} A$, and any coarse description of $\widetilde{\mathcal
R}(A)$ or dense oracle for $\widetilde{\mathcal R}(A)$ computes
$A$. Since $\mathcal R(A) \equiv\sub{T} A$, it follows that $\mathcal
E$ also induces these embeddings.

Thus we see that $\mathcal E$ induces embeddings of the Turing degrees
into the uniform and nonuniform generic and coarse
degrees. (See~\cite[Proposition 2.2] {HJKS} for a proof that these
embeddings are the
unique ones with certain natural properties.) We now
show that this is also the case for the uniform and nonuniform dense
and effective dense degrees, by adapting the arguments in~\cite{DI}.

\begin{lem} \label{descr}
\mbox{}
\begin{enumerate}[\rm 1.]

\item  From the graph of a weak cofinite description of $A$ we can
uniformly enumerate the graph of a dense description of
$\widetilde{\mathcal R}(A)$, and vice-versa.

\item From a strong cofinite description of $A$ we can uniformly
compute a strong dense description of $\widetilde{\mathcal R}(A)$, and
vice-versa.

\end{enumerate}
\end{lem}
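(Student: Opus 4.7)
The plan is to exploit the fact that $|J_n|=2^n$, so the single bit $A(n)$ determines $\widetilde{\mathcal R}(A)$ on the entire block $J_n$, while conversely $A(n)$ can be recovered by a majority vote over the $2^n$ values of $\widetilde{\mathcal R}(A)$ on $J_n$. A density calculation then shows that this majority is correct for all but finitely many $n$.

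For the forward direction of part~1, given a weak cofinite description $f$ of $A$, I would use the enumeration operator with axioms $(\{(n,b)\},(m,b))$ for all $n$, all $m\in J_n$, and all $b\in\{0,1\}$. Because $f$ is a partial function, $\gra(f)$ has at most one value per $n$, so the output is single-valued and is the graph of some partial function $h$. If $N$ is large enough that $f(n)=A(n)$ for all $n\geq N$, then $h(m)=A(n)=\widetilde{\mathcal R}(A)(m)$ for every $m\in J_n$ with $n\geq N$. Hence $h$ agrees with $\widetilde{\mathcal R}(A)$ on a cofinite set and is in particular a dense description of it.

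For the reverse direction I would use a majority-vote enumeration operator whose axioms $(F,(n,b))$ are indexed by those finite $F\subseteq\omega\times\{0,1\}$ consisting of more than $2^{n-1}$ pairs of the form $(m,b)$ with $m\in J_n$ (for a single $b$). Since $d$ is a partial function, the majority condition can be satisfied by at most one $b$, so the output is single-valued. The main obstacle is verifying that the majority value is $A(n)$ for all but finitely many $n$. Letting $S=\{m:d(m)=\widetilde{\mathcal R}(A)(m)\}$, so that $\rho(S)=1$, the elementary estimate
\[
|S\cap J_n|\geq 2^{n+1}\rho_{2^{n+1}}(S)-2^n=2^n\bigl(2\rho_{2^{n+1}}(S)-1\bigr)
\]
shows $|S\cap J_n|>|J_n|/2$ as soon as $\rho_{2^{n+1}}(S)>3/4$, which holds for all sufficiently large $n$. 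For such $n$ the majority is present and equal to $A(n)$, so the operator outputs $(n,A(n))$ and the resulting partial function is a weak cofinite description of $A$.

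For part~2 the same two strategies are implemented by Turing functionals, with $\square$ handled explicitly. From a strong cofinite description $f:\omega\to\{0,1,\square\}$ of $A$, set $h(m):=f(n)$ for the unique $n$ with $m\in J_n$ (and $h(0):=\square$); since $f(n)\in\{0,1\}$ for all but finitely many $n$, the strong domain of $h$ is cofinite and has density $1$. Conversely, given a strong dense description $h$ of $\widetilde{\mathcal R}(A)$, set $f(n):=b$ if strictly more than $2^{n-1}$ of the values $h(m)$ with $m\in J_n$ equal $b\in\{0,1\}$, and $f(n):=\square$ otherwise. The same counting estimate, applied to the strong domain of $h$ in place of $S$, guarantees that this majority is both present and equal to $A(n)$ for all but finitely many $n$, yielding a strong cofinite description of $A$.
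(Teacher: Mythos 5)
Your proposal is correct and takes essentially the same approach as the paper: code $A(n)$ onto the block $J_n$ and recover it by a vote over the block, with the density estimate on $\rho_{2^{n+1}}$ showing the vote succeeds for all but finitely many $n$. The only (immaterial) difference is in the reverse direction of part 2, where the paper simply takes any non-$\square$ value of $h$ on $J_n$ (correctness being automatic since a strong partial description is never wrong), whereas you run a majority vote there too; both work because the strong domain has density $1$.
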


\begin{proof}
Part 1: Suppose $f$ is a weak cofinite description of $A$ and define
$g$ by letting $g(m)=f(n)$ for $m \in J_n$. The graph of $g$ can be
uniformly enumerated from the graph of $f$, and if $f(n)=A(n)$ then
$g(m)=\widetilde{\mathcal R}(A)(m)$ for all $m \in J_n$, so $g$ is a
weak cofinite description, and hence a dense description, of
$\widetilde{\mathcal R}(A)$.

Now suppose $g$ is a dense description of $\widetilde{\mathcal R}(A)$
and define $f$ as follows. For each $n$, if we see $g(m)= i$
for more than half of the elements of $J_n$, then let
$f(n)= i$. The graph of $f$ can be uniformly enumerated from
the graph of $g$. If $n$ is large enough, then
$\rho_{2^{n+1}}(\{m : g(m) = \widetilde{\mathcal R}(A)(m)\}) >
\frac{3}{4}$. Since $|J_n|=2^n$, it follows that $g(m) = A(n)$
for more than half of the elements of $J_n$, and hence
$f(n) = A(n)$. Thus $f$ is a weak cofinite description of $A$.

Part 2: Suppose $f$ is a strong cofinite description of $A$ and define
$g$ by letting $g(m)=f(n)$ for $m \in J_n$. Then $g$ can be
uniformly computed from $f$, and if $f(n)=A(n)$ then
$g(m)=\widetilde{\mathcal R}(A)(m)$ for all $m \in J_n$, so $g$ is a
strong cofinite description, and hence a strong dense description, of
$\widetilde{\mathcal R}(A)$.

Now suppose $g$ is a strong dense description of $\widetilde{\mathcal
R}(A)$ and define the $g$-computable function $f$ as follows. For
each $n$, if $g(m)=i \neq \square$ for some $m \in J_n$ then let
$f(n)=i$, and otherwise let $f(n)=\square$. (Notice that this
definition makes sense, because if $m,m' \in J_n$ and both $g(m) \neq
\square$ and $g(m') \neq \square$, then $g(m)=A(n)=g(m')$.) If $f(n)
\neq \square$ then $f(n)=\widetilde{\mathcal R}(A)(m)$ for some $m \in
J_n$, and hence $f(n)=A(n)$. If $n$ is large enough, then
$\rho_{2^{n+1}}(\{m : g(m)=\widetilde{\mathcal R}(A)(m)\}) >
\frac{1}{2}$. Since $|J_n|=2^n$, it follows that $g(m)=A(n)$ for some
$m \in J_n$, and hence $f(n)=A(n)$. Thus $f$ is a strong cofinite
description of $A$.
\end{proof}

\begin{prop}
\mbox{}
\begin{enumerate}[\rm 1.]

\item The map $\widetilde{\mathcal R}$ induces an embedding of the
Turing degrees into the nonuniform dense degrees.

\item The map $\widetilde{\mathcal R}$ induces an embedding of the
Turing degrees into the nonuniform effective dense degrees.

\item The map $\widetilde{\mathcal R}$ induces an embedding of the
weak cofinite degrees into the uniform dense degrees.

\item The map $\widetilde{\mathcal R}$ induces an embedding of the
strong cofinite degrees into the uniform effective dense degrees.

\item The map $\mathcal R$ induces an embedding of the Turing degrees
into the weak cofinite degrees, and hence $\mathcal E$ induces an
embedding of the Turing degrees into the uniform dense degrees.

\item The map $\mathcal R$ induces an embedding of the Turing degrees
into the strong cofinite degrees, and hence $\mathcal E$ induces an
embedding of the Turing degrees into the uniform effective dense
degrees.
  
\end{enumerate}
\end{prop}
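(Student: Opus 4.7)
The plan is to reduce all six parts to two ingredients: Lemma \ref{descr}, which translates uniformly between (weak or strong) cofinite descriptions of $A$ and (strong) dense descriptions of $\widetilde{\mathcal R}(A)$, and the Dzhafarov--Igusa result~\cite[Proposition 3.3]{DI} that $\mathcal R$ embeds the Turing degrees into the mod-finite and cofinite degrees. I use throughout that a total function is trivially a strong cofinite description of itself, that a set's characteristic function is a dense description of the set, that $\widetilde{\mathcal R}(A) \equiv\sub{T} A$ and $\mathcal R(A) \equiv\sub{T} A$, and that the nonuniform version of each cofinite-style reducibility coincides with Turing reducibility.

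Parts 3 and 4 are pure compositions. For part 3, a uniform wcf-reduction $W$ witnessing $A \leq\sub{wcf} B$ lifts to a uniform dense-reduction $\widetilde{\mathcal R}(A) \leq\sub{ud} \widetilde{\mathcal R}(B)$ by composing three enumeration operators: Lemma \ref{descr}.1 sends a dense description of $\widetilde{\mathcal R}(B)$ to a wcf description of $B$, then $W$ produces a wcf description of $A$, then Lemma \ref{descr}.1 again produces a dense description of $\widetilde{\mathcal R}(A)$; the reverse direction uses the same sandwich run the other way. Part 4 is analogous using Lemma \ref{descr}.2 and Turing functionals. Parts 1 and 2 follow by running these same sandwiches in the nonuniform setting. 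The forward direction of part 1 takes an arbitrary dense description $d$ of $\widetilde{\mathcal R}(B)$, applies Lemma \ref{descr}.1 to enumerate a wcf description $f$ of $B$ from $\gra(d)$, then (nonuniformly, with hardcoded finite corrections depending on $f$) enumerates $\gra(\chi_B)$, then $\gra(\chi_A)$ via the Turing reduction $A \leq\sub{T} B$, and finally a dense description of $\widetilde{\mathcal R}(A)$. Part 2 is similar with Turing reductions in place of enumeration reductions. The reverse direction of each feeds the total description $\chi_{\widetilde{\mathcal R}(B)}$ (computable from $B$) into the hypothesized reduction and applies Lemma \ref{descr} plus finite correction to recover $A \leq\sub{T} B$.

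Part 5 forward follows from~\cite[Proposition 3.3]{DI} and Corollary \ref{cor:cofinite} (mf implies wcf); the reverse follows from nonuniform-wcf-equals-Turing together with $\mathcal R(A) \equiv\sub{T} A$. Composing with part 3 yields the $\mathcal E$ embedding. The main obstacle is part 6 forward, because we do not know that mf-reducibility implies scf-reducibility, so the Dzhafarov--Igusa reduction cannot simply be promoted. I give a direct construction: given $A \leq\sub{T} B$ via $\Psi$ and any scf description $f$ of $\mathcal R(B)$, define $\Phi^f(m)$ by decoding $m = 2^n k$ with $k$ odd, simulating $\Psi$ on input $n$, and answering each query $B(n')$ by searching for the smallest odd $k'$ with $f(2^{n'}k') \neq \square$ and returning that value. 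The search terminates because the strong domain of $f$ is cofinite; the answer is correct because such a point lies in the strong domain, so $f(2^{n'}k') = \mathcal R(B)(2^{n'}k') = B(n')$; and $\Phi^f$ is a total function equal to $\mathcal R(A)$, hence trivially a scf description. The reverse direction is nonuniform-scf-equals-Turing, and composing with part 4 yields the $\mathcal E$ embedding.
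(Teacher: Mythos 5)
Your proposal is correct and, for parts 1--4 and 6, follows essentially the same route as the paper: parts 3 and 4 are the same three-step sandwich through Lemma~\ref{descr}, parts 1 and 2 are the nonuniform version with finite corrections hardcoded, and your direct construction for part 6 (recover $B(n')$ from a strong cofinite description of $\mathcal R(B)$ by searching the column $\{2^{n'}k' : k' \textrm{ odd}\}$ for a non-$\square$ value, then output $\mathcal R(A)$ itself) is exactly the paper's intended argument, which it only sketches by noting that a strong cofinite description of $\mathcal R(A)$ uniformly computes $A$. The one genuine divergence is the forward direction of part 5: the paper constructs an explicit enumeration operator $W$ that, given finite pieces of the graph of a weak cofinite description of $\mathcal R(A)$, simulates $\Phi$ on input $n$ with oracle bits read off the column positions $2^m(2(n+k)+1)$, so that the output is correct for all sufficiently large $n+k$; you instead cite Dzhafarov--Igusa~\cite[Proposition 3.3]{DI} ($\mathcal R$ embeds the Turing degrees into the mod-finite degrees) together with Corollary~\ref{cor:cofinite} (for sets, $\leq\sub{mf}$ implies $\leq\sub{wcf}$). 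Your shortcut is legitimate, since both ingredients precede this proposition, and it is arguably cleaner; the trade-off is that it leans on the nontrivial Dzhafarov--Igusa theorem that mf-reducibility implies ubfb-reducibility in Cantor space (which underlies Corollary~\ref{cor:cofinite}), whereas the paper's operator is self-contained. You are also right that this shortcut is unavailable for part 6, since it is open whether mf-reducibility implies scf-reducibility, which is precisely why a direct construction is needed there.
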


\begin{proof}
Part 1: Suppose that $B \leq\sub{T} A$. From a dense description of
$\widetilde{\mathcal R}(A)$, we can (nonuniformly) recover $A$, and
hence $B$, and hence $\widetilde{\mathcal R}(B)$. Thus
$\widetilde{\mathcal R}(B) \leq\sub{nd} \widetilde{\mathcal
R}(A)$. Conversely, suppose that $\widetilde{\mathcal R}(B)
\leq\sub{nd} \widetilde{\mathcal R}(A)$. From $A$ we can compute
$\widetilde{\mathcal R}(A)$, and hence obtain a dense description of
$\widetilde{\mathcal R}(B)$, from which we can (nonuniformly) recover
$B$. Thus $B \leq\sub{T} A$.

Part 2 follows by essentially the same argument.

Part 3: Suppose that $B \leq\sub{wcf} A$. By the first part of the
lemma, given a dense description of $\widetilde{\mathcal R}(A)$, we
can uniformly recover a weak cofinite description of $A$, from which
we can uniformly obtain a weak cofinite description of $B$, and hence,
again by the first part of the lemma, uniformly obtain a dense
description of $\widetilde{\mathcal R}(B)$. The other direction is
similar.

Part 4 follows by essentially the same argument, using the second part
of the lemma.

Part 5: If $\mathcal R(B) \leq\sub{wcf} \mathcal R(A)$ then $B
\equiv\sub{T} \mathcal R(B) \leq\sub{T} \mathcal R(A) \equiv\sub{T} A$. 
Conversely, suppose that $\Phi^A=B$. Define the
enumeration operator $W$ as follows. For each finite $F \subset \omega
\times \omega$ and each $n$ and $k$, if there is a $\sigma \in
2^{< \omega}$ such that $\Phi^\sigma(n)\converges$ and for each
$m<|\sigma|$, we have $(2^m(2(n+k)+1),\sigma(m)) \in F$, then
enumerate the pair $(F,(2^n(2k+1),\Phi^\sigma(n)))$ into $W$. Let $f$
be a weak cofinite description of $\mathcal R(A)$. Then $W^{\gra(f)}$
is the graph of a partial function $g$. If $n+k$ is sufficiently
large, then $g(2^m(2(n+k)+1)) = A(m)$ for all $m$, and hence
$g(2^n(2k+1)) = B(n)$. Hence $g$ is a weak cofinite
description of $\mathcal R(B)$. Thus $\mathcal R(B) \leq\sub{wcf}
\mathcal R(A)$.

Part 6 follows by a similar argument to part 1, using the fact that
from a strong cofinite description of $\mathcal R(A)$, we can
uniformly recover $A$.
\end{proof}

These embeddings can be used to separate the uniform and nonuniform
versions of dense and effective dense reducibility. In the latter
case, the proof is the same as the one for generic and coarse
reducibility, due to Igusa (see~\cite[Theorems 2.6 and 2.7]{HJKS}) and
based on results of
Dzhafarov and Igusa~\cite{DI}. Recall that a set $X$ is
\emph{autoreducible} if there is a Turing functional $\Phi$ such that
$\Phi^{X \setminus \{n\}}(n)=X(n)$ for all $n$. Examples of sets that
are not autoreducible include all $1$\-/random sets (as shown by Figueira,
Miller, and Nies~\cite[Proposition 8]{FMN}) and all $1$\-/generic sets
(see~\cite[Proposition 2.5]{HJKS}).

\begin{cor}
There are sets $A$ and $B$ such that $A \leq\sub{ned} B$ but $A
\nleq\sub{ued} B$.
\end{cor}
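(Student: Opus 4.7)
The plan is to adapt Igusa's argument for the analogous separation in the uniform versus nonuniform generic and coarse degrees, cited just above as~\cite[Theorems 2.6 and 2.7]{HJKS}. Fix a $1$-random set $X$, which is not autoreducible by~\cite[Proposition 8]{FMN}, and set $A = X$ and $B = \widetilde{\mathcal R}(X)$.

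To establish $A \leq\sub{ned} B$, note that any effective dense description of $B$ yields, via Lemma~\ref{descr}(2), a strong cofinite description of $X$; supplying the finitely many remaining values of $X$ as nonuniform advice then recovers $X = A$ outright, whose characteristic function is trivially an effective dense description of $A$.

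For the failure of uniform reducibility, suppose for a contradiction that a Turing functional $\Phi$ witnesses $A \leq\sub{ued} B$. For each $n$, define the strong partial function $d_n : \omega \to \{0, 1, \square\}$ by $d_n(m) = B(m)$ for $m \notin J_n$ and $d_n(m) = \square$ for $m \in J_n$. Since $J_n$ has density $0$, each $d_n$ is an effective dense description of $B$. The crucial point is that $d_n$ is uniformly computable from $X \uhr (\omega \setminus \{n\})$: whenever $m \in J_k$ for $k \neq n$ we have $B(m) = X(k)$, and on $J_n$ we output $\square$ without consulting $X(n)$. Hence the map $n \mapsto \Phi^{d_n}(n)$ is uniformly computable from $X \uhr (\omega \setminus \{n\})$, and showing that it returns $X(n)$ for all but finitely many $n$ would contradict the non-autoreducibility of $X$.

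The main obstacle is to rule out the possibility that $\Phi^{d_n}(n) = \square$ for too many $n$, since the strong domain of $\Phi^{d_n}$ only has density $1$ as a function of inputs for each fixed $n$. The strategy, taken straight from Igusa's template, is to compare $\Phi^{d_n}(n)$ with $\Phi^{B}(n)$: since $\Phi^{B}$ is itself an effective dense description of $X$, the output $\Phi^{B}(n)$ equals $X(n)$ for $n$ in a set of density $1$, and whenever the use of the computation $\Phi^{B}(n)$ is disjoint from $J_n$ the two computations coincide, so $\Phi^{d_n}(n) = \Phi^{B}(n) = X(n)$. The delicate density-theoretic part of Igusa's proof then shows that the exceptional set on which either $\Phi^{B}(n)$ fails to equal $X(n)$ or the use hits $J_n$ can be absorbed into a standard autoreducibility-style argument to yield a full autoreduction of $X$, contradicting~\cite[Proposition 8]{FMN}.
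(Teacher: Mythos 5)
Your first half is fine (indeed more than fine: by the ``vice-versa'' direction of Lemma~\ref{descr}, part 2, an effective dense description of $\widetilde{\mathcal R}(X)$ already yields a strong cofinite, hence effective dense, description of $X$ without any nonuniform advice). But the second half cannot be repaired, because with your choice $A=X$, $B=\widetilde{\mathcal R}(X)$ the statement you are trying to prove is false: $X \leq\sub{ued} \widetilde{\mathcal R}(X)$ holds for \emph{every} $X$. The uniform reduction is exactly the block-decoding procedure from Lemma~\ref{descr}(2): given an effective dense description $d$ of $\widetilde{\mathcal R}(X)$, on input $n$ search the finite block $J_n$ for some $m$ with $d(m)\neq\square$ and output that value, and output $\square$ if there is none. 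Since the strong domain of $d$ has density $1$, it meets $J_n$ for all sufficiently large $n$, so the output is a strong cofinite (hence effective dense) description of $X$. Your own oracles $d_n$ show why your ``main obstacle'' is fatal rather than delicate: this functional, applied to $d_n$, simply outputs $\square$ at $n$, which is perfectly legal because an effective dense description may output $\square$ on any density-$0$ (here even finite) set, whereas autoreducibility requires a correct answer at every $n$. So the density-$0$ exceptional set cannot be ``absorbed'' into an autoreduction, and no choice of $\Phi$ is being contradicted.

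This is precisely why the paper takes $A=\mathcal E(X)=\widetilde{\mathcal R}(\mathcal R(X))$ rather than $A=X$: the extra coding layer $\mathcal R$ is what blocks the uniform reduction. The paper's route is then structural rather than a direct diagonalization: nonuniformly, any effective dense description of $B$ computes $X$, hence computes $A$, giving $A\leq\sub{ned}B$; uniformly, Dzhafarov and Igusa~\cite[Proof of Proposition 4.6]{DI} show $\mathcal R(X)\nleq\sub{cf}X$ when $X$ is not autoreducible, Corollary~\ref{cor:cofinite} upgrades this to $\mathcal R(X)\nleq\sub{scf}X$, and the embedding of the scf-degrees into the ued-degrees induced by $\widetilde{\mathcal R}$ then yields $A\nleq\sub{ued}B$. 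If you want to salvage your approach, you must both replace $A$ by $\mathcal E(X)$ and replace the direct autoreducibility argument by (something equivalent to) the cofinite-reducibility machinery; as written, the proposed construction proves a false non-reduction.
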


\begin{proof}
Let $X$ be a set that is not autoreducible, let $A=\mathcal E(X)$, and
let $B=\widetilde{\mathcal R}(X) $. By part 1 of Lemma~\ref{descr},
any effective dense description of
$B$ computes $X$, and hence computes $A$, so $A \leq\sub{ned}
B$. Dzhafarov and Igusa~\cite[Proof of Proposition 4.6]{DI} showed
that $\mathcal R(X) \nleq
\sub{cf} X$, so by Corollary~\ref{cor:cofinite}, $\mathcal R(X) \nleq
\sub{scf} X$. Since $\widetilde{\mathcal R}$ induces an embedding of
the scf-degrees into the ued-degrees, $A \nleq\sub{ued} B$.
\end{proof}

For dense reducibility, the analogous argument would filter through
weak cofinite reducibility, and it is not clear that assuming that $X$
is not autoreducible suffices to ensure that $\mathcal R(X) \nleq
\sub{wcf} X$. However, we can replace this assumption with a stronger
one mentioned by Hirschfeldt, Jockusch, Kuyper, and
Schupp~\cite[Section 2]{HJKS}.

\begin{defn}
A set $X$ is \emph{jump-autoreducible} if there is a Turing
functional $\Phi$ such that $\Phi^{(X \setminus \{n\})'}(n) = X(n)$
for all $n$.
\end{defn}

Not all sets are jump-autoreducible. Indeed, it was shown in an early
version of~\cite{HJKS} that $2$-generic sets and $2$-random sets are not
jump-autoreducible. These proofs are not in the final version of that
paper, as considering failures of jump-autoreducibility turned out not
to be necessary in studying coarse and generic reducibility. Since
we will use this concept in connection with dense reducibility, we
include these proofs here.

\begin{prop}[Hirschfeldt, Jockusch, Kuyper, and Schupp]
If $X$ is $2$-generic, then $X$ is not jump-autoreducible.
\end{prop}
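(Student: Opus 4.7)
The plan is to fix an arbitrary Turing functional $\Phi$ and show that $\Phi$ cannot witness jump-autoreducibility of $X$. Arguing by contradiction, suppose $\Phi^{(X \setminus \{n\})'}(n) = X(n)$ for every $n$; I will construct a $\Sigma^0_2$ set of strings $V$ that is dense along $X$, so that by $2$-genericity $X$ meets $V$ at some $\sigma \prec X$, and the forcing lemma then produces an $n$ with $\Phi^{(X \setminus \{n\})'}(n) \neq X(n)$, a contradiction.

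Let $V$ consist of the strings $\sigma$ for which there exist $n < |\sigma|$ and $i \in \{0,1\}$ with $\sigma(n) = 1 - i$ such that $\sigma$ $\emptyset'$-forces, in the sense of Cohen forcing over $\emptyset'$, the $\Sigma^0_1((X \setminus \{n\})')$ statement $\Phi^{(X \setminus \{n\})'}(n) \converges = i$. Unwinding this using the standard forcing-theoretic description of the jump of a $1$-generic set (a value $\rho(e) = 1$ of an alleged initial segment of $(X \setminus \{n\})'$ is forced by finite-step convergence of $\Phi_e^{\sigma'\setminus\{n\}}(e)$ on some extension $\sigma'$ of $\sigma$, while $\rho(e) = 0$ is $\emptyset'$-decidable), one sees that $V$ is $\emptyset'$-c.e., hence $\Sigma^0_2$.

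The key observation is a symmetry under bit-flipping: since $X \setminus \{n\}$ does not depend on $X(n)$, if $\sigma^*$ is obtained from $\sigma$ by flipping bit $n$, then $\sigma \setminus \{n\} = \sigma^* \setminus \{n\}$ and the two conditions have the same extensions at positions other than $n$. It follows that $\sigma$ $\emptyset'$-forces $\Phi^{(X \setminus \{n\})'}(n) \converges = i$ iff $\sigma^*$ does. To prove density of $V$ along $X$, fix $\tau \prec X$, pick $n \geq |\tau|$, and invoke the contradiction hypothesis: $\Phi^{(X \setminus \{n\})'}(n) \converges = X(n)$ is a $\Sigma^0_2(X)$ statement true of $X$, so by $2$-genericity some $\sigma \prec X$ with $|\sigma| > n$ $\emptyset'$-forces it. Flipping bit $n$ yields $\sigma^* \succeq \tau$ with $\sigma^*(n) = 1 - X(n)$ still forcing the value $X(n)$, so $\sigma^* \in V$.

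By $2$-genericity, $X$ meets $V$; pick $\sigma \prec X$ in $V$ with witnesses $n, i$. Then $X(n) = \sigma(n) = 1 - i$, while the forcing lemma applied at the $2$-generic $X$ yields $\Phi^{(X \setminus \{n\})'}(n) = i$, contradicting jump-autoreducibility. I expect the main technical obstacle to be setting up $\emptyset'$-forcing for $\Sigma^0_1$ jump statements carefully enough that the forcing relation is simultaneously $\emptyset'$-c.e. and manifestly invariant under flipping bit $n$; once that bookkeeping is in place, the remainder is a short application of standard facts about $2$-generic sets.
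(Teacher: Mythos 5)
Your argument is correct in outline, but it takes a genuinely different route from the paper's. The paper is modular: since $X$ is $2$-generic it is $1$-generic relative to $\emptyset'$, hence not autoreducible relative to $\emptyset'$ (relativizing the fact that $1$-generics are not autoreducible); on the other hand the $1$-generic sets are uniformly $\mathrm{GL}_1$ and each $X \setminus \{n\}$ is $1$-generic, so $(X \setminus \{n\})'$ is uniformly computable from $(X \setminus \{n\}) \oplus \emptyset'$, and composing this with a putative jump-autoreduction would make $X$ autoreducible relative to $\emptyset'$, a contradiction. You instead give a single self-contained forcing construction: you force the jump statements $\Phi^{(X\setminus\{n\})'}(n)\converges = i$ directly, note that this forcing relation is $\emptyset'$-c.e.\@ and invariant under flipping bit $n$, and use the bit-flip symmetry to produce a $\Sigma^0_2$ set of strings that $X$ must meet, yielding an $n$ with $\Phi^{(X\setminus\{n\})'}(n) \neq X(n)$. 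In effect you re-prove inside one argument both ingredients the paper cites: the bit-flip idea behind non-autoreducibility of $1$-generics, and the forcing description of the jump that underlies uniform $\mathrm{GL}_1$. The paper's route is shorter given the cited results and hides the uniformity bookkeeping in the uniform $\mathrm{GL}_1$ lemma; yours is self-contained and makes the symmetry explicit, at the price of the forcing setup you flag. One caution on that setup: the positive jump facts $\rho(e)=1$ should be witnessed by convergence of $\Phi_e^{\sigma\setminus\{n\}}(e)$ at the condition $\sigma$ itself, not merely at some extension $\sigma'$ as your parenthetical suggests, because your final step needs forcing-implies-truth along $X$; with that convention density still holds (each true negative fact is forced by an initial segment of $X$ already by $1$-genericity), and the concluding forcing-implies-truth step then holds for every real extending $\sigma$, with no genericity needed there at all.
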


\begin{proof}
Since $X$ is $2$-generic, $X$ is $1$-generic relative to $\emptyset'$.
Hence, by the relativized version of the proof in~\cite[Proposition 2.5]{HJKS} that
$1$-generic sets are not autoreducible, $X$ is not autoreducible relative
to $\emptyset'$.  However, the class of $1$-generic sets is uniformly
$\mathrm{GL}_1$, i.e., there exists a single Turing functional $\Psi$
such that for every $1$-generic $X$ we have $\Psi^{X \oplus
\emptyset'} = X'$, as can be verified by looking at the usual proof
that every $1$-generic is $\mathrm{GL}_1$ (see~\cite[Lemma
2.6]{J}). Of course, if $X$ is $1$-generic, then $X \setminus \{n\}$
is also $1$-generic for every $n$.  Thus from an oracle for $(X
\setminus \{n\}) \oplus \emptyset'$ we can uniformly compute $(X
\setminus \{n\})'$. Now, if $X$ is jump-autoreducible, from $(X
\setminus \{n\})'$ we can uniformly compute $X(n)$.  Composing these
reductions shows that $X(n)$ is uniformly computable from $(X
\setminus \{n\}) \oplus \emptyset'$, which contradicts our previous
remark that $X$ is not autoreducible relative to $\emptyset'$.
\end{proof}

In the proof above we used the fact that the $2$-generic sets are
uniformly $\mathrm{GL}_1$. For $2$-random sets this fact is almost
true, as expressed by the following lemma. The proof is adapted from
Monin~\cite[Proposition 4.2.5]{Mo}, where a generalization for higher
levels of randomness
is proved. Let $\mathcal U_0,\mathcal U_1,\ldots$ be a fixed universal
Martin-L\"of test relative to $\emptyset'$. The \emph{$2$-randomness
deficiency} of a $2$-random $X$ is the least $c$ such that $X \notin
\mathcal U_c$

\begin{lem}[after Monin~\cite{Mo}]
There is a Turing functional $\Theta$ such that, for any $2$-random
$X$ and any upper bound $b$ on the $2$-randomness deficiency of $X$,
we have $\Theta^{X \oplus \emptyset',b} = X'$.
\end{lem}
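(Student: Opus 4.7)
The plan is to imitate the classical argument that $2$-random sets are generalized low ($X' \leq\sub{T} X \oplus \emptyset'$), making the dependence on the deficiency bound $b$ uniform. I will construct, uniformly in a parameter $c$, a $\emptyset'$-Martin-L\"of test $\{\mathcal{W}_c\}_{c \in \omega}$ such that $X \in \mathcal{W}_c$ is exactly the event in which a natural time-bounded algorithm for $X'$ fails. By universality of $\{\mathcal{U}_c\}$ there is a constant $c_0$ with $\mathcal{W}_{c+c_0} \subseteq \mathcal{U}_c$ for all $c$, so any $2$-random $X$ of deficiency at most $b$ automatically avoids $\mathcal{W}_{b+c_0}$ and the algorithm succeeds.

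For each $n$ and $s$, let $A_n^s = \{Y : \Phi_n^Y(n) \text{ halts in at most } s \text{ steps}\}$, a clopen set depending only on the first $s$ bits of $Y$, whose rational measure $\mu(A_n^s)$ is computable uniformly in $n, s$. Let $A_n = \bigcup_s A_n^s = \{Y : \Phi_n^Y(n) \converges\}$. The real $\mu(A_n)$ is left-c.e.\ uniformly in $n$, and for each rational $q$ the relation $\mu(A_n) \leq q$ is $\Pi^0_1$ (it is just $\forall s\, \mu(A_n^s) \leq q$), hence $\emptyset'$-decidable uniformly in $n, q$. Using $\emptyset'$, find for each $(n, k)$ the least $s(n, k)$ with $\mu(A_n) \leq \mu(A_n^s) + 2^{-k}$; such an $s$ exists because $\mu(A_n^s) \nearrow \mu(A_n)$. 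Set
\[
\mathcal{W}_c = \bigcup_{n \in \omega}\bigl(A_n \setminus A_n^{s(n,\, c+n+2)}\bigr).
\]
Each $\mathcal{W}_c$ is $\Sigma^0_1(\emptyset')$ uniformly in $c$ and has measure at most $\sum_n 2^{-(c+n+2)} \leq 2^{-c}$, so $\{\mathcal{W}_c\}$ is a $\emptyset'$-Martin-L\"of test and there is a constant $c_0$ with $\mathcal{W}_{c+c_0} \subseteq \mathcal{U}_c$ for all $c$.

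The functional $\Theta$ is then: given oracle $X \oplus \emptyset'$ and parameter $b$, set $c = b + c_0$, use $\emptyset'$ to compute $s^* = s(n, c + n + 2)$, simulate $\Phi_n^X(n)$ for $s^*$ steps, and output $1$ if the computation halts within this bound and $0$ otherwise. Correctness is immediate when $\Phi_n^X(n)$ halts by stage $s^*$ (so $X'(n) = 1$) or diverges (so $X'(n) = 0$). The sole potential failure is $\Phi_n^X(n) \converges$ at some stage strictly greater than $s^*$; in that case $X \in A_n \setminus A_n^{s^*} \subseteq \mathcal{W}_{b + c_0} \subseteq \mathcal{U}_b$, contradicting the assumption that $X$ has $2$-randomness deficiency at most $b$.

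The main subtlety, such as it is, lies in the uniformity: I need $s(n, k)$ to be a total $\emptyset'$-computable function, for which it is essential that $\mu(A_n) \leq q$ be $\Pi^0_1$ uniformly; and I need a single universality constant $c_0$ for the entire test $\{\mathcal{W}_c\}$, which requires that the $\Sigma^0_1(\emptyset')$-index of $\mathcal{W}_c$ be a computable function of $c$ so that $\{\mathcal{W}_c\}$ is genuinely presented as a $\emptyset'$-Martin-L\"of test. Both points are straightforward once the relevant predicates are placed in the arithmetical hierarchy as above.
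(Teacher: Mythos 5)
Your proof is correct and takes essentially the same route as the paper's: use $\emptyset'$ to find stages at which $\mu(\{Y : \Phi_n^Y(n)\converges\})$ is approximated to within $2^{-k}$, note that the late-halting tails form a $\emptyset'$-Martin-L\"of test, and use the deficiency bound $b$ to conclude that $\Phi_n^X(n)$ halts, if at all, by the $\emptyset'$-computed stage. The only difference is bookkeeping: the paper works with one test per index $e$ (uniformly) and computes the relevant level from $b$ via effective universality, whereas you sum the tails over $n$ into a single test $\mathcal{W}_c$ and absorb everything into one universality constant $c_0$.
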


\begin{proof}
Let $\mathcal V_e=\{Z : e \in Z'\}$. The $\mathcal V_e$ are uniformly
$\Sigma^0_1$ classes, so we can define a function $f \leq\sub{T}
\emptyset'$ such that $\mu(\mathcal V_e \setminus \mathcal
V_e[f(e,i)])<2^{-i}$ for all $e$ and $i$, where $\mu$ is Lebesgue
measure on $2^\omega$. Then each sequence $\mathcal V_e \setminus
\mathcal V_e[f(e,0)],\mathcal V_e \setminus \mathcal
V_e[f(e,1)],\ldots$ is an $\emptyset'$-Martin L\"of test, and from $b$
we can compute a number $m$ such that if $X$ is $2$-random and $b$
bounds the $2$-randomness deficiency of $X$, then $X \notin \mathcal
V_e \setminus \mathcal V_e[f(e,m)]$. Then $X \in \mathcal V_e$ if and only if
$X \in \mathcal V_e[f(e,m)]$, which we can verify
$(X\oplus\emptyset')$-computably.
\end{proof}

\begin{prop}[Hirschfeldt, Jockusch, Kuyper, and Schupp]
If $X$ is $2$-random, then $X$ is not jump-autoreducible.
\end{prop}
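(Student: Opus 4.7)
The plan mirrors the preceding proof for $2$-generic sets, with the lemma above playing the role that the uniform $\mathrm{GL}_1$ property of $1$-generics played there. Since $X$ is $2$-random, it is $1$-random relative to $\emptyset'$, so by the relativization of Figueira, Miller, and Nies~\cite[Proposition 8]{FMN}, $X$ is not autoreducible relative to $\emptyset'$. It therefore suffices to derive, from jump-autoreducibility of $X$, a single Turing functional $\Psi$ with $\Psi^{(X \setminus \{n\}) \oplus \emptyset'}(n) = X(n)$ for every $n$.

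Because the lemma requires an upper bound $b$ on the $2$-randomness deficiency of its oracle, the key step is to show that the deficiencies of the sets $X \setminus \{n\}$ grow at most linearly in $n$, uniformly for our fixed $X$. For each $n$ and $c$, the class $\mathcal{W}_{n,c} := \{Z : Z \setminus \{n\} \in \mathcal{U}_c\}$ is uniformly $\Sigma^{0,\emptyset'}_1$ in $n$ and $c$. Since $\mathcal{W}_{n,c} \subseteq \mathcal{U}_c \cup \{Z : Z \triangle \{n\} \in \mathcal{U}_c\}$ and the bit-flip $Z \mapsto Z \triangle \{n\}$ is measure-preserving, $\mu(\mathcal{W}_{n,c}) \leq 2^{1-c}$. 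Setting $\mathcal{T}_k := \bigcup_n \mathcal{W}_{n,\,k+n+2}$ then yields
\[
\mu(\mathcal{T}_k) \;\leq\; \sum_n 2^{-(k+n+1)} \;=\; 2^{-k},
\]
so $\{\mathcal{T}_k\}_k$ is an $\emptyset'$-Martin-L\"of test. Since $X$ is $2$-random, there is some $k_0$ with $X \notin \mathcal{T}_{k_0}$; writing $C := k_0 + 2$, this says $X \setminus \{n\} \notin \mathcal{U}_{n+C}$ for every $n$, and hence $n+C$ bounds the $2$-randomness deficiency of $X \setminus \{n\}$ for every $n$.

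Now assume for contradiction that $X$ is jump-autoreducible via $\Phi$. Hard-coding $C$, define the Turing functional
\[
\Psi^{Y \oplus \emptyset'}(n) \;:=\; \Phi^{\Theta^{Y \oplus \emptyset',\, n+C}}(n).
\]
When $Y = X \setminus \{n\}$, the preceding lemma gives $\Theta^{(X \setminus \{n\}) \oplus \emptyset',\, n+C} = (X \setminus \{n\})'$, because $n+C$ bounds the relevant deficiency. Therefore $\Psi^{(X \setminus \{n\}) \oplus \emptyset'}(n) = \Phi^{(X \setminus \{n\})'}(n) = X(n)$ for every $n$, contradicting the failure of autoreducibility of $X$ relative to $\emptyset'$.

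The main obstacle is the uniformity point: unlike in the $2$-generic argument, no single constant need bound the $2$-randomness deficiency of $X \setminus \{n\}$ across all $n$, so the deficiency cannot simply be hard-coded. The linear bound $n + C$ produced from the $\emptyset'$-test $\{\mathcal{T}_k\}$ resolves this, because the additive constant $C$ (read off from the level of $X$ outside the test) is non-uniform in $X$ but uniform in $n$, which is exactly what is needed to build the single functional $\Psi$.
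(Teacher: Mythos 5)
Your proof is correct and takes essentially the same route as the paper's: reduce to the failure of autoreducibility of $X$ relative to $\emptyset'$ (via the relativized Figueira--Miller--Nies result) and compose $\Phi$ with $\Theta$ using a computable bound on the $2$-randomness deficiency of $X \setminus \{n\}$ to build a single functional $\Psi$. The only difference is that the paper dismisses the existence of such a computable bound as something that ``can be directly verified,'' while you carry out that verification explicitly with the auxiliary $\emptyset'$-Martin-L\"of test $\{\mathcal{T}_k\}_k$, yielding the bound $n + C$.
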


\begin{proof}
Because $X$ is $2$-random, it is not autoreducible relative to
$\emptyset'$, as can be seen by relativizing the proof of Figueira,
Miller, and Nies~\cite[Proposition 8]{FMN} that no $1$-random set is
autoreducible.
To obtain a contradiction, assume that $X$ is jump-autoreducible via
some functional $\Phi$. It can be directly verified that there is a
computable function $f$ such that $f(n)$ bounds the randomness
deficiency of $X \setminus \{n\}$. Now let $\Psi^{Y \oplus
\emptyset'}(n) = \Phi^{\Theta^{Y \oplus \emptyset',f(n)}}(n)$. Then
$X$ is autoreducible relative to $\emptyset'$ via $\Psi$, which is a
contradiction.
\end{proof}

\begin{cor}
There are sets $A$ and $B$ such that $A \leq\sub{nd} B$ but $A
\nleq\sub{ud} B$.
\end{cor}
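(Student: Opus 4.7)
The plan is to follow the template of the preceding corollary, replacing autoreducibility by jump-autoreducibility and cofinite reducibility by weak cofinite reducibility. Let $X$ be a set that is not jump-autoreducible (e.g., a $2$-random, by the previous proposition), and set $A = \mathcal{E}(X) = \widetilde{\mathcal R}(\mathcal R(X))$ and $B = \widetilde{\mathcal R}(X)$. The direction $A \leq\sub{nd} B$ is immediate from part~1 of the preceding proposition since $\mathcal R(X) \leq\sub{T} X$.

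For the negative direction, I would assume $A \leq\sub{ud} B$ and derive a contradiction. Since $\widetilde{\mathcal R}$ induces an embedding of the weak cofinite degrees into the uniform dense degrees (part~3 of the preceding proposition), this assumption would give $\mathcal R(X) \leq\sub{wcf} X$ via some enumeration operator $W$. The task then reduces to deducing from this that $X$ is jump-autoreducible, contrary to our choice.

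The main obstacle lies in this last step: because enumeration reducibility provides only positive information about the oracle, the clean autoreducibility argument used for the cofinite case (where one searches for an oracle-free computation) does not apply, and one cannot simply read $X(n)$ off from a single query to $W^{\gra(X \setminus \{n\})}$. Instead I plan to invoke the Shoenfield limit lemma. Given $n$, let $Y = X \setminus \{n\}$ (as a characteristic function), which is a weak cofinite description of $X$, so $g := W^{\gra(Y)}$ is the graph of a weak cofinite description of $\mathcal R(X)$. In particular, $(2^n k, X(n))$ is enumerated into $g$ for almost all odd $k$, while the single-valuedness of $g$ forces $(2^n k, 1-X(n))$ to be enumerated for only finitely many odd $k$.

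Concretely, for each stage $s$, let $C_i(s)$ count the odd $k$ for which $(2^n k, i)$ has appeared in the enumeration of $W^{\gra(Y)}$ by stage $s$, and set $m_s = 1$ iff $C_1(s) > C_0(s)$. Then $(m_s)_s$ is $Y$-computable uniformly in $n$; since $C_{X(n)}(s) \to \infty$ while $C_{1-X(n)}(s)$ remains bounded, $\lim_s m_s = X(n)$. Applying the Shoenfield limit lemma then yields $X(n) \leq\sub{T} (X \setminus \{n\})'$ via a single Turing functional uniform in $n$, so $X$ is jump-autoreducible, contradicting our choice of $X$ and completing the argument.
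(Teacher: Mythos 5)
Your proposal is correct and follows essentially the same route as the paper: choose $X$ not jump-autoreducible, set $A=\mathcal E(X)$, $B=\widetilde{\mathcal R}(X)$, reduce the negative direction to $\mathcal R(X) \nleq\sub{wcf} X$ via the embedding of the wcf-degrees into the ud-degrees, and then defeat any enumeration operator $W$ by feeding it $\gra(X\setminus\{n\})$ and extracting $X(n)$ from $(X\setminus\{n\})'$ by a limit/majority-count argument. The paper's proof uses exactly this majority-vote limit computation (phrased as ``at least half of the enumerated pairs $(2^n(2k+1),i)$ have $i=0$''), so your count comparison plus the Shoenfield limit lemma is the same argument in equivalent form.
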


\begin{proof}
Let $X$ be a set that is not jump-autoreducible, let $A=\mathcal
E(X)$, and let $B=\widetilde{\mathcal R}(X) $. Any dense description
of $B$ computes $X$, and hence computes $A$, so $A \leq\sub{nd} B$. It
is now enough to show that $\mathcal R(X) \nleq\sub{wcf} X$. Since
$\widetilde{\mathcal R}$ induces an embedding of the wcf-degrees into
the ud-degrees, it will then follow that $A \nleq\sub{ud} B$.

Assume for a contradiction that there is an enumeration operator $W$
such that $W^f$ is a weak cofinite description of $\mathcal R(X)$ for
each weak cofinite description $f$ of $X$. Given an oracle $Y$ and an
$n$, think of $Y$ as a function and enumerate $W^{\gra(Y)}$. Let
$g(n,s)=0$ if at least half of the pairs $(2^n(2k+1),i)$ enumerated by
stage $s$ have $i=0$, and let $g(n,s)=1$ otherwise. Let $g(n) = \lim_s
g(n,s)$, or $g(n)\diverges$ if this limit does not exist. We can
uniformly compute the partial function $g$ from $Y'$. If $Y=X
\setminus \{n\}$ then $Y$ is a weak cofinite description of $X$, so
$W^X$ is a weak cofinite description of $\mathcal R(X)$, and hence
$g(n) = X(n)$. Thus we have a procedure for uniformly
computing $X(n)$ from $(X \setminus \{n\})'$, contrary to hypothesis.
\end{proof}

Another issue that has attracted some attention recently is that of
quasiminimality. For any of our asymptotic reducibilities r, an
r\-/degree is \emph{quasiminimal} if it is not zero and is not above any nonzero
degree in the image of the embedding induced by $\mathcal E$. We
expect the r\-/degrees of sufficiently random sets to be quasiminimal,
because if $X$ is not quasiminimal, then there is some noncomputable
set that is computable from all partial versions of $X$ (in whatever
sense of partial version is appropriate to r), and it should not be
possible to code a nontrivial amount of information into a random set
in a sufficiently redundant way for such information recovery from
partial versions to be possible. Indeed, Cholak and
Igusa~\cite[Proposition 5.5]{CI}
showed that every $1$\-/random set has quasiminimal degree in both the
uniform coarse and uniform generic degrees. Hirschfeldt, Jockusch,
Kuyper, and Schupp~\cite[remarks after Corollary 3.3]{HJKS} showed
that while this is not quite the
case in general for the nonuniform coarse and generic degrees, every
weakly $2$\-/random set does have quasiminimal degree in the
nonuniform coarse degrees. Cholak, Hirschfeldt, and Igusa
(see~\cite[Theorem 6.3]{CI}) showed that this is also the case in the nonuniform
generic degrees.

Hirschfeldt, Jockusch, Kuyper, and Schupp~\cite[Theorem 4.2]{HJKS}
also showed that
every $1$\-/generic set has quasiminimal degree in both the uniform
and nonuniform coarse degrees. Cholak and Igusa~\cite[Proposition
5.4]{CI} showed
that this is also the case in the uniform generic degrees, and Cholak,
Hirschfeldt, and Igusa (see~\cite[Proposition 6.9]{CI}) showed the
same for the nonuniform generic degrees.

A special case of Corollary~3.14 in~\cite{HJKS} is that if $X
\leq\sub{T} \emptyset'$ is $1$\-/random, then there is a promptly
simple set $A$ that is computable from every dense oracle for $X$. A
similar proof to that of Proposition~\ref{prop:gendef} then shows that
for each dense description $f$ of $X$, we have that $\mathcal E(A)$ is
enumeration
reducible to $f$, which implies that
$\mathcal E(A) \leq\sub{nd} X$ and $\mathcal E(A) \leq\sub{ned}
X$. Thus not every $1$\-/random set has quasiminimal degree in the
nonuniform dense or effective dense degrees. On the other hand, if
$\mathcal E(A) \leq\sub{nd} X$ then $A$ is computable from every dense
oracle for $X$, and hence from every coarse description of $X$, which
implies that $\mathcal E(A) \leq\sub{nc} X$. Thus the fact that every
weakly $2$\-/random or $1$\-/generic set has quasiminimal degree in
the nonuniform coarse degrees implies that the same is true in the
nonuniform dense degrees, and hence in the uniform dense degrees.
Whether $1$-randomness suffices to ensure quasiminimality in the
uniform dense degrees remains open. (See Open
Question~\ref{q:quasiminimal}.)

Cholak and Igusa~\cite[Propositions 5.4 and 5.5]{CI} showed that if a
set is $1$-random or
$1$-generic, then it is quasiminimal in the cofinite degrees (with
respect to the embedding $\mathcal R$). Since
scf-reducibility implies cf-reducibility, such a set must also be
quasiminimal in the strong-cofinite degrees, and hence in the
uniform effective dense degrees.

As mentioned above, Cholak, Hirschfeldt, and Igusa~(see~\cite{CI})
showed that
$1$-generic sets are quasiminimal for nonuniform generic reducibility, as
are weakly $2$-random sets. By forcing an effective dense oracle in place
of their generic oracle, their methods also apply to nonuniform
effective dense reducibility. Thus we have the following theorem.

\begin{thm}[including results of Hirschfeldt, Jockusch, Kuyper, and
Schupp~\cite{HJKS}; Cholak and Igusa~\cite{CI}; and Cholak,
Hirschfeldt, and Igusa (see~\cite{CI})]
Every weakly $2$-random or $1$-generic set is quasiminimal in all of
the asymptotic degree structures considered here. Every $1$-random
set is quasiminimal in the uniform generic, coarse, and effective
dense degrees, but there are $1$-random sets that are not
quasiminimal in the nonuniform generic, coarse, dense, and effective
dense degrees.
\end{thm}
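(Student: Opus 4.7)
The plan is to assemble the theorem from the cited results together with the reducibility relations established earlier in the paper, handling each of its three claims separately.

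First, for the claim that every weakly $2$-random or $1$-generic set is quasiminimal in all eight structures considered here, I would start with the following cited building blocks: such sets are quasiminimal in the nonuniform coarse degrees by~\cite{HJKS}, and in the nonuniform generic degrees by Cholak, Hirschfeldt, and Igusa (via~\cite{CI}). To pass to the nonuniform dense degrees, I would use the observation made just before the theorem: if $\mathcal{E}(A) \leq\sub{nd} X$, then $A$ is computable from every dense oracle for $X$, hence from every coarse description of $X$ (as coarse descriptions give rise to dense oracles), and therefore $\mathcal{E}(A) \leq\sub{nc} X$; thus nonuniform coarse quasiminimality of $X$ passes to nonuniform dense quasiminimality. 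For the nonuniform effective dense degrees I would adapt the forcing argument of~\cite{CI} for the nonuniform generic case, replacing the generic oracle they force with an effective dense oracle; the structural parallels between generic and effective dense descriptions (density-$1$ information sets) make the adaptation routine. Uniform quasiminimality in each structure then follows automatically, since uniform reducibility implies nonuniform reducibility.

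Second, for the claim that every $1$-random set is quasiminimal in the uniform generic, coarse, and effective dense degrees, the uniform generic and uniform coarse cases are due to Cholak and Igusa~\cite{CI}. For the uniform effective dense case I would combine their result that $1$-random sets are quasiminimal in the cofinite degrees (with respect to $\mathcal{R}$) with Corollary~\ref{cor:cofinite}, which gives that scf-reducibility implies cf-reducibility. Hence such sets are also quasiminimal in the strong cofinite degrees, and since $\widetilde{\mathcal{R}}$ embeds the strong cofinite degrees into the uniform effective dense degrees and $\mathcal{E} = \widetilde{\mathcal{R}} \circ \mathcal{R}$, the uniform effective dense quasiminimality of $\mathcal{E}(X)$ follows.

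Third, for the existence of $1$-random sets that are not quasiminimal in the nonuniform generic, coarse, dense, or effective dense degrees, I would take any $1$-random $X \leq\sub{T} \emptyset'$ and appeal to the special case of~\cite[Corollary~3.14]{HJKS} mentioned just above the theorem: there is a promptly simple (hence noncomputable) set $A$ computable from every dense oracle for $X$. Running through a forcing argument analogous to the one in Proposition~\ref{prop:gendef} upgrades this to an enumeration reduction from the graph of every dense description of $X$, so $\mathcal{E}(A) \leq\sub{nd} X$ and $\mathcal{E}(A) \leq\sub{ned} X$. Since coarse and generic descriptions of $X$ both yield dense oracles, $A$ is also computable from every such description, giving $\mathcal{E}(A) \leq\sub{nc} X$ and $\mathcal{E}(A) \leq\sub{ng} X$. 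As $\mathcal{E}(A)$ is then a nonzero degree in the image of $\mathcal{E}$ below $X$, quasiminimality fails in all four nonuniform structures.

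The main obstacle is the forcing argument required for nonuniform effective dense quasiminimality of weakly $2$-random and $1$-generic sets: one must force an effective dense description of $X$ whose strong domain is kept computable and of density $1$, while simultaneously meeting the diagonalization requirements ruling out reductions $\mathcal{E}(A) \leq\sub{ned} X$ for all noncomputable $A$. Although the combinatorics are close to the generic-oracle argument of~\cite{CI}, carefully interleaving the density requirements on the strong domain with the enumeration-operator requirements is the step where the real work lies.
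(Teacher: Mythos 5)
Your proposal is correct and follows essentially the same route as the paper, which assembles this theorem from the cited results, the transfer of nonuniform coarse quasiminimality to the (non)uniform dense degrees via the dense-oracle/coarse-description observation, the adaptation of the Cholak--Hirschfeldt--Igusa forcing to effective dense oracles, the scf/cf embedding route for the uniform effective dense case, and the HJKS Corollary~3.14 plus Proposition~\ref{prop:gendef}-style argument for the $1$-random non-quasiminimality examples. The only (harmless) deviation is that you derive the failure of quasiminimality of such $1$-randoms in the nonuniform coarse and generic degrees from that same construction, whereas the paper cites the earlier remarks of~\cite{HJKS} for those two cases.
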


This theorem leaves open only the aforementioned question of whether
there are $1$-random sets that are not quasiminimal in the uniform
dense degrees.

It should also be noted that, as shown by Igusa (see~\cite[Theorem 4.3]{HJKS}), if
$\gamma(X)=1$ (so in particular if $X$ is densely computable) but $X$
is not coarsely computable, then $X$ has quasiminimal degree in both
the uniform and nonuniform coarse degrees.

\section{Measure, upper cones, and minimal pairs}
\label{sec:upperCones}

We now consider the sizes of upper cones and existence of minimal
pairs in settings arising from notions of asymptotic computability. A
reason for studying these issues in conjunction is given by the work
of Hirschfeldt, Jockusch, Kuyper, and Schupp~\cite{HJKS}. They showed
in \cite[Theorem 5.2]{HJKS} that if $A \in 2^\omega$ is not coarsely computable, then the class of
$X \in 2^\omega$ such that $A$ is coarsely computable relative to $X$
has measure $0$, and indeed does not contain any set that is weakly
$3$\-/random relative to $A$. They then used this result to show that
if $X$ is not coarsely computable and $Y$ is weakly $3$\-/random
relative to $X$, then any set that is coarsely computable relative
both to $X$ and to $Y$ must be coarsely computable. In this case, we
say that $X$ and $Y$ form a \emph{minimal pair for relative coarse
computability} (and similarly for our other notions of asymptotic
computation). It follows that $X$ and $Y$ also form a minimal pair in
the (uniform or nonuniform) coarse degrees of sets.

Earlier, Downey, Jockusch, and Schupp~\cite[Section 7]{DJS} asked
whether there are minimal pairs in the uniform generic degrees. This
question is still open, in both the uniform and nonuniform cases (see
Open Question~\ref{q:minpairs}), but a partial answer was given by
Igusa~\cite[Theorem 2.1]{I1}, who showed that there are no minimal
pairs for relative generic computability.

In this section, we extend the upper cone result in~\cite[Theorem 5.2]{HJKS} to
all of our notions of asymptotic computation. For dense computability
(but not for generic or effective dense computability), we will then
be able to adapt the method in~\cite{HJKS} to prove the existence of
minimal pairs, though at a higher level of randomness.

We begin with a technical result that provides a modified version of
Fubini's Theorem constraining the relation between asymptotic density
and Lebesgue measure on Cantor space. For a sequence 
$\mathcal{S}=\{\mathcal{S}_n\}_{n\in\omega}$
 of Lebesgue-measurable subsets of
$2^\omega$ and a set $A \subseteq \omega$,
let $\mathcal S(A)= \{n : A \in \mathcal S_n\}$. Let $\mu$ denote
Lebesgue measure on Cantor space.

\begin{lem}
\label{lem:asymptoticFubini}
Let $a$, $b$, $q$, and $\mathcal S =\{\mathcal{S}_n\subseteq
2^{\omega}\}_{n\in\omega}$ be such that
\[
\overline{\rho}(\{n : \mu(\mathcal{S}_n)<q\})>a.
\]
and
\[
\mu(\{A : \rho(\mathcal S(A))=1\}) > b.
\]
Then $(1-q)a+b \leq 1$.
\end{lem}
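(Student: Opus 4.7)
The hypotheses naturally invite a Fubini-type computation, and that is what I would do. Writing $U = \{n : \mu(\mathcal{S}_n) < q\}$ and $V = \{A : \rho(\mathcal{S}(A)) = 1\}$, the hypotheses become $\overline{\rho}(U) > a$ and $\mu(V) > b$. Since $\rho_N(\mathcal{S}(A)) = \frac{1}{N} \sum_{n<N} \mathbf{1}_{\mathcal{S}_n}(A)$, interchanging the finite sum with the integral gives
\[
\int \rho_N(\mathcal{S}(A)) \, d\mu(A) = \frac{1}{N} \sum_{n<N} \mu(\mathcal{S}_n).
\]
Splitting this sum according to whether $n \in U$, and bounding $\mu(\mathcal{S}_n) < q$ for $n \in U$ and $\mu(\mathcal{S}_n) \leq 1$ otherwise, yields
\[
\int \rho_N(\mathcal{S}(A)) \, d\mu(A) \;\leq\; 1 - (1-q)\, \rho_N(U).
\]

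Next, I would use $\overline{\rho}(U) > a$ to choose an increasing sequence $N_k \to \infty$ with $\rho_{N_k}(U) > a$, so that the right-hand side above is at most $1 - (1-q) a$ for every $k$. On the other side, for each $A \in V$ we have $\rho_{N_k}(\mathcal{S}(A)) \to 1$ by the very definition of $V$; since the integrands lie in $[0,1]$, dominated convergence on $V$ gives $\int_V \rho_{N_k}(\mathcal{S}(A))\, d\mu(A) \to \mu(V)$. Combining these,
\[
\mu(V) \;=\; \lim_{k} \int_V \rho_{N_k}(\mathcal{S}(A)) \, d\mu(A) \;\leq\; \liminf_{k} \int \rho_{N_k}(\mathcal{S}(A)) \, d\mu(A) \;\leq\; 1 - (1-q)a,
\]
and since $\mu(V) > b$ this delivers $(1-q)a + b < 1$, as desired.

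The only nonroutine point is the decision to evaluate along a $\limsup$-witnessing subsequence for $\rho_N(U)$ while simultaneously exploiting that, on $V$, the \emph{full} density $\rho(\mathcal{S}(A)) = 1$ forces convergence of $\rho_{N_k}(\mathcal{S}(A))$ to $1$ along \emph{any} subsequence; this is what lets us marry the two sides of the Fubini computation. Measurability of $V$ (as $\{A : \liminf_N \rho_N(\mathcal{S}(A)) = 1\}$) and of the finite averages $\rho_N(\mathcal{S}(A))$ is immediate from measurability of the $\mathcal{S}_n$, and I would just note it in passing.
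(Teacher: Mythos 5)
Your argument is correct and is essentially the paper's: the same interchange of the finite Ces\`aro average with the integral, and the same splitting of $\frac{1}{N}\sum_{n<N}\mu(\mathcal{S}_n)$ according to whether $\mu(\mathcal{S}_n)<q$, evaluated at lengths witnessing the upper density hypothesis. The only divergence is in handling the other side of that identity: you apply bounded convergence along the subsequence $N_k$ restricted to $V$, whereas the paper fixes $r<1$, works with the nested classes $\{A : (\forall k>n)\,\rho_k(\mathcal{S}(A))>r\}$, and lets $r\to 1$; your variant is fine and in fact yields the slightly stronger strict inequality $(1-q)a+b<1$.
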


\begin{proof}
Fix $r<1$ and let
\[
\mathcal{X}_n=\{A : (\forall k>n)\, \rho_k(\mathcal S(A))>r\}.
\]
Since the union of these classes contains $\{A : \rho(\mathcal
S(A))=1\}$, it must have measure greater than $b$; since the classes
are nested, there must be some finite $N$ such that
$\mu(\mathcal{X}_n)>b$ for all $n>N$.

As the set of $j$ such that $\mu(\mathcal{S}_j)<q$ has upper density
greater than $a$, there must be some $n>N$ such that
$\mu(\mathcal{S}_j)<q$ for at least $an$ many $j<n$.

As integration commutes with finite sums,
\begin{equation}
\label{6.1eq}
\frac{1}{n}\sum_{j<n}\int_{2^{\omega}}{\mathds{1}_{\mathcal{S}_j}}d\mu=\int_{2^{\omega}}{\frac{1}{n}\sum_{j<n}{\mathds{1}_{\mathcal{S}_j}}}d\mu,
\end{equation}
where $\mathds{1}_{\mathcal{S}_j} : 2^\omega \to \{0, 1\}$ is $1$ on
$S_j$ and $0$ elsewhere.
Furthermore,
\[
\frac{1}{n}\sum_{j<n}\int_{2^{\omega}}{\mathds{1}_{\mathcal{S}_j}}d\mu
=\frac{1}{n}\sum_{j<n} \mu(\mathcal{S}_j).
\]
By our choice of $n$, there are at least $an$ many $j<n$ for which $\mu(\mathcal{S}_j)<q$; for the remaining $S_j$, we can still bound their measure by $\mu(\mathcal{S}_j)\leq 1$. Therefore,
\[
\frac{1}{n}\sum_{j<n} \mu(\mathcal{S}_j)<\frac{1}{n}\left(1(n-an)+q(an)\right)=(1-a)+qa=1-(1-q)a.
\]
On the other hand, considering the right-hand side of equation~(\ref{6.1eq}), we have
\[
\int_{2^{\omega}}{\frac{1}{n}\sum_{j<n}{\mathds{1}_{\mathcal{S}_j}}}d\mu
=\int_{A \in 2^{\omega}}{\rho_n(\mathcal S(A))}d\mu
\geq \int_{A \in \mathcal{X}_n}{\rho_n(\mathcal S(A))} d\mu
>\mu(\mathcal{X}_n)r >br.
\]

Therefore, $1-(1-q)a>br$ and hence $(1-q)a+br<1$. Since $r<1$ is
arbitrary, $(1-q)a+b\leq 1$.
\end{proof}

The following statement puts the above result into a more useful form
for our purposes.

\begin{prop}
\label{prop:majorityVoting}
Let $\mathcal S=\{\mathcal{S}_n\subseteq 2^{\omega}\}_{n\in\omega}$
and $q$ be such that
\[
\mu(\{A\in 2^{\omega} : \rho(\mathcal S(A))=1\})>q.
\]
Then
\[
\rho(\{ n : \mu(\mathcal{S}_n)\geq q\})=1.
\]
\end{prop}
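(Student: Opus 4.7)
The plan is to reuse the Fubini-style computation from Lemma~\ref{lem:asymptoticFubini} but restrict the integration to $\mathcal{G} := \{A \in 2^{\omega} : \rho(\mathcal{S}(A))=1\}$, on which the integrand $\rho_n(\mathcal{S}(A))$ converges pointwise to $1$. Restricting in this way lets me invoke dominated convergence to replace the loose lower bound $r\mu(\mathcal{X}_n)$ used in the lemma by an exact asymptotic equality, and this sharpening is what upgrades the conclusion from a merely positive lower density of $\{n : \mu(\mathcal{S}_n) \geq q\}$ to full density $1$.

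The execution is as follows. Set $T := \{n : \mu(\mathcal{S}_n) < q\}$, so that the proposition amounts to $\overline{\rho}(T) = 0$. Since $0 \leq \rho_n(\mathcal{S}(A)) \leq 1$ for every $n$ and every $A$, and $\rho_n(\mathcal{S}(A)) \to 1$ for each $A \in \mathcal{G}$, dominated convergence gives
\[
\int_{\mathcal{G}} \rho_n(\mathcal{S}(A))\, d\mu(A) \longrightarrow \mu(\mathcal{G}) \quad \text{as } n \to \infty.
\]
The same interchange of sum and integral as in equation~(\ref{6.1eq}) identifies the left-hand side with $\frac{1}{n}\sum_{j<n}\mu(\mathcal{G}\cap\mathcal{S}_j)$. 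For $j \in T$ one has $\mu(\mathcal{G}\cap\mathcal{S}_j) \leq \mu(\mathcal{S}_j) < q$, and for $j \notin T$ one has only the trivial bound $\mu(\mathcal{G}\cap\mathcal{S}_j) \leq \mu(\mathcal{G})$; splitting the sum accordingly yields
\[
\frac{1}{n}\sum_{j<n} \mu(\mathcal{G}\cap\mathcal{S}_j) \leq \mu(\mathcal{G}) - \bigl(\mu(\mathcal{G})-q\bigr)\,\rho_n(T)
\]
for every $n$.

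Taking $\liminf$ on both sides and using the strict positivity of $\mu(\mathcal{G})-q$ provided by the hypothesis, the $\liminf$ of the right-hand side equals $\mu(\mathcal{G}) - (\mu(\mathcal{G})-q)\,\overline{\rho}(T)$, which must be at least $\mu(\mathcal{G})$. This forces $(\mu(\mathcal{G})-q)\,\overline{\rho}(T) \leq 0$, so $\overline{\rho}(T) = 0$, which is the desired conclusion. The main conceptual obstacle is recognizing that applying Lemma~\ref{lem:asymptoticFubini} verbatim only yields $\overline{\rho}(T) \leq (1-\mu(\mathcal{G}))/(1-q) < 1$, not $0$; the restriction of the integration to $\mathcal{G}$ is exactly what converts the lower bound $r\mu(\mathcal{X}_n)$ of the lemma into the exact asymptotic value $\mu(\mathcal{G})$, and the rest of the argument is routine.
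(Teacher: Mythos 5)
Your proof is correct, and it takes a genuinely different route from the paper's. The paper keeps Lemma~\ref{lem:asymptoticFubini} as the key tool and argues by contradiction: assuming the bad set $T=\{n:\mu(\mathcal S_n)<q\}$ has upper density $p>0$, it passes to a thinned index sequence $I$ (the bad indices together with a sparse arithmetic progression) so that within $I$ the bad indices have relative upper density close to $1$, and then applies the lemma with $a=1-\epsilon$, $b=q+\epsilon$ to get $(1-q)a+b>1$, a contradiction. You instead bypass both the subsequence bookkeeping and the contradiction by restricting the Fubini computation to $\mathcal G=\{A:\rho(\mathcal S(A))=1\}$: bounded convergence gives the exact limit $\frac{1}{n}\sum_{j<n}\mu(\mathcal G\cap\mathcal S_j)\to\mu(\mathcal G)$, while your split of the sum bounds it above by $\mu(\mathcal G)-(\mu(\mathcal G)-q)\rho_n(T)$, forcing $\overline{\rho}(T)=0$ directly — and your observation that a verbatim application of the lemma only yields $\overline{\rho}(T)\leq(1-\mu(\mathcal G))/(1-q)<1$ is accurate, which is exactly why the paper needs the thinning step. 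Your argument is shorter and makes the lemma unnecessary for this proposition (in the paper the lemma's cruder lower bound $r\mu(\mathcal X_n)$, via the nested classes $\mathcal X_n$, is what the subsequence trick compensates for); what the paper's route buys is a standalone quantitative trade-off (the lemma) between upper density of small-measure indices and the measure of the density-one class. The only point worth making explicit in your write-up is that $\mathcal G$ is measurable — it is, since $\rho_n(\mathcal S(A))$ is a finite average of indicators of the measurable sets $\mathcal S_j$ and $\mathcal G=\{A:\liminf_n\rho_n(\mathcal S(A))\geq 1\}$ — so the restricted integrals and the bounded convergence theorem apply without further assumptions.
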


\begin{proof}
If $q=0$ then the statement is trivial, so we may assume that $q>0$.
Let $p=\overline{\rho}(\{n : \mu(\mathcal{S}_n)<q\})$ and assume for a
contradiction that $p>0$. Let $\epsilon>0$ be such that
$\frac{1}{p\epsilon} \in \mathbb N$ and
\[
\mu(\{A\in 2^{\omega} : \rho(\mathcal S(A))=1\})>q+\epsilon.
\]

Let $I=\{n_0<n_1<\cdots\}$ be the sequence of all $n$ such that either
$\mu(\mathcal S_n)<q$ or $n \equiv 0 \pmod
{\frac{1}{p\epsilon}}$. Then $I$ has lower density at least
$p\epsilon$ and upper density at most $p(1+\epsilon)$. We now pass to
the subsequence $\mathcal T=\{\mathcal S_{n_j}\}_{j \in
\omega}$. Since $I$ has positive lower density, every set of density
$1$ must intersect $I$ with density $1$ within $I$, so
\[
\mu(\{A\in 2^{\omega} : \rho(\mathcal T(A))=1\})>q+\epsilon.
\]
Since the upper density of $I$ is at most $p(1+\epsilon)$, given any
$\delta>0$, there is some $N$ so that for all $n>N$, we have
$\rho_n(I)<p(1+\epsilon)+\delta$. However, $I$
contains all $n$ such that $\mu(\mathcal S_n)<q$; these form a set $G$ of
upper density $p$ by assumption, and we have $\rho_n(G)  > p - \delta$ for
infinitely many $n$.    We therefore see that
\[
\overline{\rho}(\{j : \mu(\mathcal{S}_{n_j})<q\})
\geq \frac{p-\delta}{p(1+\epsilon)+\delta} =\frac{1-\frac{\delta}{p}}{1+\epsilon+\frac{\delta}{p}}.
\]
Since this inequality holds for all $\delta>0$, we must have
\[
\overline{\rho}(\{j : \mu(\mathcal{S}_{n_j})<q\}) \geq \frac{1}{1+\epsilon}
>1-\epsilon.
\]

Let $a=1-\epsilon$ and $b=q+\epsilon$. Then
$(1-q)a+b=(1-q)(1-\epsilon)+q+\epsilon=1+q\epsilon>1$, and $\mathcal
T$ has the properties in the statement of
Lemma~\ref{lem:asymptoticFubini}, yielding the desired contradiction.
\end{proof}

This result is the foundation for our arguments in the remainder of
this section, as it allows us to use majority-vote arguments to construct
computable asymptotic descriptions of sets with positive-measure upper
cones. We first apply this technique to generic computability.

\begin{thm}
\label{thm:genericUpperCones}
If $f : \omega \to \omega$ is not generically computable then
\[
\mu(\{X : f \textrm{ is generically computable relative to } X\})=0.
\]
Indeed, no element of this set can be weakly $4$\-/random relative to
$f$. Thus all nontrivial upper cones in the (uniform or nonuniform)
generic degrees of sets have measure $0$.
\end{thm}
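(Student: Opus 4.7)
First, I would parametrize the upper cone: for each $e$, let $\mathcal{C}_e = \{X\in 2^\omega : \Phi_e^X \text{ is a generic description of } f\}$ and $\mathcal{C} = \bigcup_e \mathcal{C}_e$. A direct quantifier count shows each $\mathcal{C}_e$ is a $\Pi^0_4(f)$ class: the partial-description condition (whenever $\Phi_e^X(n)\converges$, it equals $f(n)$) is $\Pi^0_1(f\oplus X)$, while density-$1$ of the $\Sigma^0_1(X)$ set $\dom \Phi_e^X$ is $\Pi^0_4(X)$. So I would reduce to showing each $\mathcal{C}_e$ has measure $0$. Given this, every weakly-$4$-random set relative to $f$ avoids each $\mathcal{C}_e$ individually, hence avoids $\mathcal{C}$, yielding both the measure-$0$ and the weak-$4$-randomness clauses of the theorem.

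To prove $\mu(\mathcal{C}_e)=0$, I would argue by contradiction. Assuming $\mu(\mathcal{C}_e) > 0$, the Lebesgue density theorem gives a string $\sigma\in 2^{<\omega}$ with $\mu_\sigma(\mathcal{C}_e) > 3/4$, where $\mu_\sigma$ denotes Lebesgue measure on the cylinder $[\sigma]$ normalized to be a probability measure. Proposition~\ref{prop:majorityVoting} goes through verbatim for any Borel probability measure (its proof uses only Fubini and countable additivity), so I would apply it with $\mu_\sigma$ to the family $\mathcal{S}_{e,n,v} = \{X : \Phi_e^X(n)\converges = v\}$. Every $X\in \mathcal{C}_e \cap [\sigma]$ satisfies $\rho(\{n : X\in \mathcal{S}_{e,n,f(n)}\}) = \rho(\dom \Phi_e^X) = 1$, so the hypothesis of the proposition holds with threshold $q = 3/4$, yielding $\rho(\{n : \mu_\sigma(\mathcal{S}_{e,n,f(n)}) \geq 3/4\}) = 1$. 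I then define a partial computable $g$ by enumerating $\mathcal{S}_{e,n,v}\cap[\sigma]$ for each pair $(n,v)$ and setting $g(n) = v$ the first time its $\mu_\sigma$-measure is witnessed to exceed $1/2$ (the finite string $\sigma$ is hardcoded into the algorithm). For $v\neq f(n)$, the class $\mathcal{S}_{e,n,v}$ is disjoint from $\mathcal{C}_e$, so $\mu_\sigma(\mathcal{S}_{e,n,v}) \leq \mu_\sigma([\sigma]\setminus\mathcal{C}_e) < 1/4$, and hence $g$ can only output the correct value $f(n)$ and is a partial description of $f$. For density-$1$ many $n$, $\mu_\sigma(\mathcal{S}_{e,n,f(n)}) \geq 3/4 > 1/2$, so $g(n) = f(n)$ on density-$1$ many $n$, making $g$ a partial computable generic description of $f$ and contradicting the hypothesis.

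The main obstacle is the Lebesgue-density boosting step. A naive application of Proposition~\ref{prop:majorityVoting} directly to $\mathcal{C}_e$ with the full measure $p = \mu(\mathcal{C}_e)$ breaks down when $p \leq 1/2$: values $v\neq f(n)$ could then have measure up to $1 - p \geq 1/2$, so the majority vote would fail to pick out $f(n)$ unambiguously and $g$ could be multi-valued or produce wrong outputs. Passing to a cylinder $[\sigma]$ on which the relative measure of $\mathcal{C}_e$ exceeds $3/4$ forces all incorrect values strictly below the $1/2$ threshold, converting Proposition~\ref{prop:majorityVoting}'s density-$1$ conclusion into a well-defined partial computable generic description, at the price of absorbing the finitely many bits $\sigma$ as hardcoded advice.
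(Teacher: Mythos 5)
Your proposal is correct and is essentially the paper's own argument: reduce to a single functional, note the class of oracles computing a generic description is $\Pi^{0,f}_4$, boost its measure via Lebesgue density, apply Proposition~\ref{prop:majorityVoting}, and run a majority-vote enumeration whose wrong-value classes are forced below the $\frac{1}{2}$ threshold, yielding a partial computable generic description of $f$. The only difference is presentational: you work with the normalized measure on the cylinder $[\sigma]$ and hardcode $\sigma$ (with threshold $\frac{3}{4}$), whereas the paper silently rebases the functional so that $\mu(\mathcal{F}_\Phi)>\frac{2}{3}$ and applies the proposition to Lebesgue measure itself.
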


\begin{proof}
For a functional $\Phi$, let
\[
\mathcal{F}_{\Phi}=\{X : \Phi^X\text{ is a generic description of } f\}.
\]
Then $\mathcal{F}_{\Phi}$ is a $\Pi^{0,f}_4$\-/class, as $X \in \mathcal
F_\Phi$ if and only if
\begin{enumerate}[\rm 1.]

\item for every $n$ and $s$, if $\Phi^X(n)[s]\converges$ then
$\Phi^X(n)=f(n)$ and

\item for every $k$ there is an $N$ such that for every $n>N$ there is
an $s$ for which $\rho_n(\dom \Phi^X[s]) > 1-\frac{1}{k}$.
  
\end{enumerate}
Thus it suffices to fix $\Phi$ and show that
$\mu(\mathcal{F}_{\Phi})=0$.

Assume for a contradiction that $\mu(\mathcal{F}_{\Phi})>0$. By
Lebesgue Density (see e.g.~\cite[Theorem 1.2.3]{DH}), we may assume that
$\mu(\mathcal{F}_{\Phi})>\frac{2}{3}$. Let $\mathcal{S}_n=\{X :
\Phi^X(n) = f(n)\}$ and let $\mathcal S=\{\mathcal S_n\}_{n
\in \omega}$. Since $\mu(\{X : \rho(\mathcal S(X))=1\}) \geq
\mu(\mathcal{F}_{\Phi})>\frac{2}{3}$,
Proposition~\ref{prop:majorityVoting} implies that
$\mu(\mathcal{S}_n)\geq\frac{2}{3}>\frac{1}{2}$ for density\-/$1$ many
$n$. We use
this fact to compute a generic description of $f$, using a
majority-vote scheme.

For each $n$, we wait until we see a class of measure greater than
$\frac{1}{2}$ such that for some $i$ and each $X$ in this class, we
have $\Phi^X(n) = i$, and define $d(n)=i$. (Note that for
each $n$ there is at most one such $i$.) Then $d$ is a partial
computable function, and $d(n)$ can never converge to a value other
than $f(n)$, as otherwise we would have a class of measure greater than
$\frac{1}{2}$ disjoint from $\mathcal{F}_{\Phi}$. Furthermore, for
each of the density\-/$1$ many $n$ such that
$\mu(\mathcal{S}_n)>\frac{1}{2}$, we have that
$d(n)\converges$. Thus $d$ is a computable generic description of $f$.
\end{proof}

A similar proof gives us the same result for dense computability.

\begin{thm}
\label{thm:denseUpperCones}
If $f : \omega \to \omega$ is not densely computable then
\[
\mu(\{X : f \textrm{ is densely computable relative to } X\})=0.
\]
Indeed, no element of this set can be weakly $4$\-/random relative to
$f$. Thus all nontrivial upper cones in the (uniform or nonuniform)
dense degrees of sets have measure $0$.
\end{thm}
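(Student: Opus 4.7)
The plan is to adapt the proof of Theorem~\ref{thm:genericUpperCones} essentially step for step; the weakening from generic to dense description simplifies rather than complicates matters.

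For each Turing functional $\Phi$ let $\mathcal{F}_\Phi = \{X : \Phi^X \text{ is a dense description of } f\}$. I would first verify that $\mathcal{F}_\Phi$ is a $\Pi^{0,f}_4$-class: $X \in \mathcal{F}_\Phi$ iff for every $k$ there is an $N$ such that for every $n > N$ there is an $s$ with more than $(1 - 1/k) n$ values $m < n$ satisfying $\Phi^X(m)[s]\converges$ and $\Phi^X(m)[s] = f(m)$. Note that only one clause is needed, in contrast to the two clauses in Theorem~\ref{thm:genericUpperCones}'s proof; we no longer need to insist separately that $\Phi^X$ be a partial description, since incorrect convergences are permitted in a dense description. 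This $\Pi^{0,f}_4$ bound immediately yields the weak-$4$-randomness statement, so it suffices to fix $\Phi$ and show $\mu(\mathcal{F}_\Phi) = 0$.

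Assuming for contradiction that $\mu(\mathcal{F}_\Phi) > 0$, Lebesgue Density lets me reduce to $\mu(\mathcal{F}_\Phi) > 2/3$. Define $\mathcal{S}_n = \{X : \Phi^X(n)\converges \text{ and } \Phi^X(n) = f(n)\}$ and $\mathcal{S} = \{\mathcal{S}_n\}_{n \in \omega}$. Every $X \in \mathcal{F}_\Phi$ satisfies $\rho(\mathcal{S}(X)) = 1$, so Proposition~\ref{prop:majorityVoting} delivers $\mu(\mathcal{S}_n) \geq 2/3 > 1/2$ for density-$1$ many $n$.

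The remaining step is a majority-vote construction of a computable dense description $d$ of $f$. For each $n$, enumerate approximations from below to the measures of the $\Sigma^0_1$-classes $U_{n,i} = \{X : \Phi^X(n)\converges \text{ and } \Phi^X(n) = i\}$, and set $d(n) = i$ as soon as some $U_{n,i}$ reaches measure greater than $1/2$ (leaving $d(n)$ undefined if this never happens). Such an $i$ is unique when it exists, because the $U_{n,i}$ are pairwise disjoint. For each of the density-$1$ many $n$ with $\mu(\mathcal{S}_n) > 1/2$ the scheme produces $d(n) = f(n)$, so $d$ is a computable dense description of $f$, contradicting the hypothesis. I expect no serious obstacle here: the only conceptual departure from the generic proof is that incorrect convergences of $d$ need not be excluded, since $d(n) = i \neq f(n)$ forces $\mu(U_{n,i}) > 1/2$ and hence $\mu(\mathcal{S}_n) < 1/2$, placing such $n$ in a set of density $0$.
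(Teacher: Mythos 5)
Your proposal is correct and follows essentially the same route as the paper's proof: the same class $\mathcal{F}_\Phi$ with the same one-clause $\Pi^{0,f}_4$ characterization, the same Lebesgue density reduction to $\mu(\mathcal{F}_\Phi)>\frac{2}{3}$, the same appeal to Proposition~\ref{prop:majorityVoting}, and the same majority-vote definition of $d$, with your closing remark about incorrect convergences matching the paper's observation that such $n$ force $\mu(\mathcal{S}_n)<\frac{1}{2}$ and hence form a density-$0$ set.
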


\begin{proof}
For a $\{0,1\}$\-/valued functional $\Phi$, let
\[
\mathcal{F}_{\Phi}=\{X : \Phi^X\text{ is a dense description of } f\}.
\]
We again have that $\mathcal{F}_{\Phi}$ is a $\Pi^{0,f}_4$\-/class, as
$X \in \mathcal F_\Phi$ if and only if for every $k$ there is an $N$ such that
for every $n>N$ there is an $s$ for which $\rho_n(\{m :
\Phi^X[s](m) = f(m)\}) > 1-\frac{1}{k}$.

Assume for a contradiction that $\mu(\mathcal{F}_{\Phi})>0$. We again
may assume that $\mu(\mathcal{F}_{\Phi})>\frac{2}{3}$, define
$\mathcal S_n$ as before, and conclude that
$\mu(\mathcal{S}_n)>\frac{1}{2}$ for density\-/$1$ many $n$. We then
define the partial computable function $d$ as before. If
$d(n)\diverges$ or $d(n) \converges \neq f(n)$, then the class of all
$X$ such that $\Phi^X(n)\diverges$ or $\Phi^X(n) \converges \neq f(n)$
has measure greater than $\frac{1}{2}$, which implies that
$\mu(\mathcal{S}_n)<\frac{1}{2}$, so there can be only density\-/$0$
many such $n$. Thus $d$ is a computable dense description of $f$.
\end{proof}

In the case of effective dense computability, we can again use a
similar argument, but the level of randomness needed is lower (as in
the case of coarse computability).

\begin{thm}
\label{thm:effectiveUpperCones}
If $f : \omega \to \omega $ is not effectively densely computable then
\[
\mu(\{X : f \textrm{ is effectively densely computable relative to }
X\})=0.
\]
Indeed, no element of this set can be weakly $3$\-/random relative to
$f$. Thus all nontrivial upper cones in the (uniform or nonuniform)
effective dense degrees of sets have measure $0$.
\end{thm}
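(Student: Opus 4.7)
The plan is to adapt the proofs of Theorems~\ref{thm:genericUpperCones} and~\ref{thm:denseUpperCones} with two modifications specific to the effective dense setting: the relevant class drops in arithmetic complexity by one level (yielding weak $3$-randomness in place of weak $4$-randomness), and the computable description must be total with values in $\omega \cup \{\square\}$. For each functional $\Phi$ with outputs in $\omega \cup \{\square\}$, I would set
\[
\mathcal F_\Phi = \{X : \Phi^X \text{ is an effective dense description of } f\}
\]
and verify that $\mathcal F_\Phi$ is $\Pi^{0,f}_3$, since totality of $\Phi^X$ is $\Pi^0_2$, soundness (``if $\Phi^X(n)\in\omega$ then $\Phi^X(n)=f(n)$'') is $\Pi^{0,f}_1$, and density of the strong domain is $\Pi^0_3$. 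It then suffices to show that $\mu(\mathcal F_\Phi)>0$ implies $f$ is effectively densely computable.

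Assuming $\mu(\mathcal F_\Phi)>0$, I would use Lebesgue density to boost to $\mu(\mathcal F_\Phi)>\tfrac{2}{3}$, and set $\mathcal S_n=\{X:\Phi^X(n)=f(n)\}$ and $\mathcal T_n=\{X:\Phi^X(n)=\square\}$. These are disjoint and cover $\mathcal F_\Phi$ at every $n$, so $\mu(\mathcal S_n)+\mu(\mathcal T_n)>\tfrac{2}{3}$ uniformly in $n$; Proposition~\ref{prop:majorityVoting} applied to $\mathcal S$ then gives $\rho(\{n:\mu(\mathcal S_n)\geq\tfrac{2}{3}\})=1$.

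The new ingredient is a \emph{two\-/threshold} majority vote. For each $n$ I would dovetail the $\Sigma^0_1$ events ``$\mu(\{X:\Phi^X(n)=i\})>\tfrac{1}{3}$'' for $i\in\omega$ together with ``$\mu(\mathcal T_n)>\tfrac{1}{3}$'', and set $d(n)$ to be the target of the first event to fire. Soundness is immediate, since $\tfrac{1}{3}+\tfrac{2}{3}>1$ forces any $\omega$-class appearing above to meet $\mathcal F_\Phi$, pinning $i=f(n)$; totality is forced by the pigeonhole bound $\mu(\mathcal S_n)+\mu(\mathcal T_n)>\tfrac{2}{3}$. For $n$ with $\mu(\mathcal S_n)\geq\tfrac{2}{3}$ we have $\mu(\mathcal T_n)\leq\tfrac{1}{3}$, so the $\mathcal T_n$-event cannot fire and $d(n)=f(n)\in\omega$; since such $n$ form a density-$1$ set, $d$ is a computable effective dense description of $f$, contradicting the hypothesis.

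The main obstacle, absent from Theorems~\ref{thm:genericUpperCones} and~\ref{thm:denseUpperCones}, is forcing totality of $d$: a one-threshold vote at level $\tfrac{1}{2}$ produces only a partial function whose domain is c.e.\@ of density $1$, and no computable density-$1$ subdomain need exist in general. The threshold $\tfrac{1}{3}$ is calibrated precisely so that the disjointness of $\mathcal S_n$ and $\mathcal T_n$, together with their combined measure exceeding $\tfrac{2}{3}$, simultaneously secures soundness and forces at least one $\Sigma^0_1$ event to fire at every $n$.
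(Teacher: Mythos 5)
Your argument is correct and is essentially the paper's own proof: the same class $\mathcal F_\Phi$, the Lebesgue-density boost to measure greater than $\tfrac23$, Proposition~\ref{prop:majorityVoting} applied to $\mathcal S_n=\{X:\Phi^X(n)=f(n)\}$, and the identical $\tfrac13$-threshold vote over $\omega\cup\{\square\}$, with the same soundness, totality, and density-$1$ bookkeeping. The one imprecision is in your complexity accounting: density $1$ of the strong domain is not by itself a $\Pi^0_3$ condition (naively it is $\Pi^0_4$, exactly as in the generic case), and $\mathcal F_\Phi$ is $\Pi^{0,f}_3$ only because totality lets one replace the existential stage quantifier by a universal one, as the paper does by requiring that for every $k$ there is an $N$ such that for all $n>N$ and all stages $s$, if $\Phi^X(m)[s]\converges$ for every $m<n$ then $\rho_n(\{m:\Phi^X(m)[s]=\square\})<\tfrac1k$.
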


\begin{proof}
For an $(\omega \cup \{\square\})$\-/valued functional $\Phi$, let
\[
\mathcal{F}_{\Phi}=\{X : \Phi^X\text{ is an effective dense
description of } f\}.
\]
In this case, $\mathcal{F}_{\Phi}$ is a $\Pi^{0,f}_3$\-/class, as
$X \in \mathcal F_\Phi$ if and only if 
\begin{enumerate}[\rm 1.]

\item for every $n$ there is an $s$ such that $\Phi^X(n)[s]\converges
\in \{f(n),\square\}$ and

\item for every $k$ there is an $N$ such that for every $n>N$ and
every $s$, if $\Phi^X(m)[s]\converges$ for all $m<n$, then  $\rho_n(\{m :
\Phi^X[s](m)=\square\}) < \frac{1}{k}$.
  
\end{enumerate}

Assume for a contradiction that $\mu(\mathcal F_{\Phi})>0$. We may
then assume that $\mu(\mathcal{F}_{\Phi})>\frac{2}{3}$, define
$\mathcal S_n$ as before, and conclude that
$\mu(\mathcal{S}_n)\geq\frac{2}{3}$ for density\-/$1$
many $n$. For each $n$, we wait until we see a class of measure greater than
$\frac{1}{3}$ such that for some $i \in \omega \cup \{\square\}$ and
each $X$ in this class, we have $\Phi^X(n) = i$, and define
$d(n)=i$. (There may be more than one $i$ for which such a class
exists, but we choose the first one we find.) If $X \in \mathcal
F_\Phi$, then $\Phi^X$ is total and $\Phi^X(n) \in \{f(n),\square\}$
for all $n$, so for each $n$, there are either more than
measure-$\frac{1}{3}$ many $X$ such that $\Phi^X(n) = f(n)$ or more than
measure-$\frac{1}{3}$ many $X$ such that $\Phi^X(n) = \square$. Thus $d$
is a total computable function and $d(n) \in \{f(n),\square\}$ for
all $n$. For each of the density\-/$1$ many $n$ such
that $\mu(\mathcal{S}_n) \geq \frac{2}{3}$, there are at most
measure\-/$\frac{1}{3}$ many $X$ such that $\Phi^X(n)=\square$, so $d(n)
\neq \square$. Thus $d$ is a computable effective dense description of
$f$.
\end{proof}

We can also use a similar argument to give a different proof of the
analogous result for coarse computability proved in~\cite[Theorem 5.2]{HJKS}.
This argument is simpler than the original proof, given
Proposition~\ref{prop:majorityVoting}, and also applies to functions.

\begin{thm}[Hirschfeldt, Jockusch, Kuyper, and Schupp~\cite{HJKS}]
\label{thm:hjks}
If $f: \omega \to \omega$ is not coarsely computable then
\[
\mu(\{X : f \textrm{ is coarsely computable relative to } X\})=0.
\]
Indeed, no element of this set can be weakly $3$\-/random relative to
$f$. Thus all nontrivial upper cones in the (uniform or nonuniform)
coarse degrees of sets have measure $0$.
\end{thm}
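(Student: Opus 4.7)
The plan is to adapt the majority-voting framework from the proofs of Theorems~\ref{thm:genericUpperCones}--\ref{thm:effectiveUpperCones} to coarse computability, exploiting the totality of $\Phi^X$ for $X \in \mathcal{F}_\Phi$ to produce a total (rather than merely partial) computable description $d$. For a Turing functional $\Phi$, set
\[
\mathcal{F}_\Phi = \{X : \Phi^X \text{ is a coarse description of } f\}.
\]
I would first verify that $\mathcal{F}_\Phi$ is a $\Pi^{0,f}_3$-class: $X \in \mathcal{F}_\Phi$ iff (i) $\Phi^X$ is total, a $\Pi^0_2$ condition in $X$, and (ii) for every $k$ there exists $N$ such that $\rho_n(\{m < n : \Phi^X(m) = f(m)\}) > 1 - 1/k$ for every $n > N$, a $\Pi^{0,f}_3$ condition in $X$. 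Since no set weakly $3$-random relative to $f$ can lie in a $\Pi^{0,f}_3$-null class, and $\{X : f \text{ is coarsely computable relative to } X\}$ is the countable union $\bigcup_e \mathcal{F}_{\Phi_e}$, it suffices to show $\mu(\mathcal{F}_\Phi) = 0$ for every $\Phi$.

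Assume for contradiction that $\mu(\mathcal{F}_\Phi) > 0$, and reduce by Lebesgue density to $\mu(\mathcal{F}_\Phi) > 2/3$. Setting $\mathcal{S}_n = \{X : \Phi^X(n) = f(n)\}$, the inequality $\mu(\{X : \rho(\mathcal{S}(X)) = 1\}) \geq \mu(\mathcal{F}_\Phi) > 2/3$ combined with Proposition~\ref{prop:majorityVoting} (taking $q = 2/3$) yields $\mu(\mathcal{S}_n) \geq 2/3$ for density-$1$ many $n$. Following the pattern of Theorem~\ref{thm:effectiveUpperCones}, I would then dovetail and, for each $n$, define $d(n)$ to be the first value $i \in \omega$ for which a clopen class $\{X : \Phi^X(n)[s] = i\}$ is observed to have measure strictly greater than $1/3$.

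For each of the density-$1$ many $n$ with $\mu(\mathcal{S}_n) \geq 2/3$, the value $i = f(n)$ has mass at least $2/3$ while any competing $i$ has mass at most $1/3$, so $f(n)$ is the unique value that ever passes the $> 1/3$ threshold, giving $d(n) = f(n)$. The main obstacle is to ensure that $d$ is total. This is where the totality of $\Phi^X$ on $\mathcal{F}_\Phi$ enters: we have $\mu(\{X : \Phi^X(n) \converges\}) \geq \mu(\mathcal{F}_\Phi) > 2/3$ for every $n$. When $f : \omega \to \{0, 1\}$ is a set, this already forces one of the two possible values in the range to have mass strictly above $1/3$ by pigeonhole, so the search terminates for every $n$ and $d$ is a total computable coarse description of $f$, yielding the desired contradiction. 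For the general function case $f : \omega \to \omega$, where the mass at a bad $n$ could in principle be distributed across arbitrarily many values, I would modify the scheme by replacing the single-value class $\{X : \Phi^X(n)[s] = i\}$ with the cumulative class $\{X : \Phi^X(n)[s] \leq i\}$ and using a threshold inside the gap $(1 - \mu(\mathcal{F}_\Phi),\, \mu(\mathcal{F}_\Phi))$; the analogous gap argument (now with $\mu(\{X : \Phi^X(n) \leq f(n)\}) > 2/3$ and $\mu(\{X : \Phi^X(n) \leq f(n)-1\}) \leq 1/3$ at good $n$) still pins $d(n)$ to $f(n)$ on density $1$, while totality is automatic because the cumulative measures exceed the threshold at some finite $i$ for every $n$. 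Either way, $d$ is a total computable coarse description of $f$, contradicting the hypothesis and completing the proof.
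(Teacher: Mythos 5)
Your overall framework is the same as the paper's (express the class of oracles as $\bigcup_\Phi \mathcal F_\Phi$ with $\mathcal F_\Phi$ a $\Pi^{0,f}_3$-class, apply Lebesgue density, invoke Proposition~\ref{prop:majorityVoting}, and build a computable coarse description by a measure-threshold vote), and your argument for $\{0,1\}$-valued $\Phi$ is correct: there the value $f(n)$ is the only one whose class can ever be observed to exceed measure $\frac13$, while totality follows by pigeonhole from $\mu(\{X:\Phi^X(n)\converges\})>\frac23$. But the theorem is stated for arbitrary $f:\omega\to\omega$, and your device for the general case has a genuine gap. With the cumulative classes $\{X:\Phi^X(n)[s]\leq i\}$, it is true that no $i<f(n)$ ever passes a threshold $t\in(\frac13,\frac23)$ at a good $n$, and that $d$ is total; the problem is that \emph{every} $i\geq f(n)$ passes the threshold in the limit, so the value your search detects first need not be $f(n)$. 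Concretely, suppose at a good $n$ the limit masses are $\frac23$ on $f(n)$ and $\frac13$ on $f(n)+1$, and the computations converging to $f(n)+1$ all halt quickly while only measure $0.2$ of those converging to $f(n)$ have halted so far: then the observed cumulative measure at $f(n)+1$ is about $0.53>t=\frac12$ while that at $f(n)$ is $0.2$, so your rule outputs $d(n)=f(n)+1\neq f(n)$. Taking instead ``the least $i$ that ever passes'' would fix correctness but is not computable (the set of passing $i$ is merely c.e.\@ and upward closed), and moving the threshold does not help, since the overshoot only requires the observed mass at $f(n)$ to lie in an interval of width equal to the off-$f(n)$ mass. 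So the pinning claim ``still pins $d(n)$ to $f(n)$ on density $1$'' is not justified as written, and the contradiction does not go through in the general case.

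The paper's proof closes exactly this hole by decoupling the two uses of measure: for each $n$ it first waits for a clopen class $\mathcal C$ of measure at least $\frac34$ on which $\Phi^X(n)$ has already converged (possible since $\Phi^X$ is total on a class of measure greater than $\frac34$), and only then votes \emph{within} $\mathcal C$, setting $d(n)=i$ if some $i$ has $\mu(\{X\in\mathcal C:\Phi^X(n)=i\})\geq\frac12$ and $d(n)=0$ otherwise. Waiting for near-total convergence before voting eliminates the early-passage phenomenon (any wrong vote forces $\mu(\mathcal S_n)<\frac34$, which can happen only on a density-$0$ set of $n$), and the default value makes $d$ total outright. Your proof becomes correct if you insert this ``converge first, then vote'' step (or otherwise restrict to $\{0,1\}$-valued functionals, which suffices only for the statement about sets, not for the stated theorem about functions).
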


\begin{proof}
For a functional $\Phi$, let
\[
\mathcal{F}_{\Phi}=\{X : \Phi^X\text{ is a coarse description of }
f\}.
\]
As noted in~\cite[proof of Theorem 5.2]{HJKS}, $\mathcal{F}_{\Phi}$ is a
$\Pi^{0,f}_3$\-/class. (The argument there was given for the case of a
$\{0,1\}$\-/valued $f$, but clearly applies in general.) Assume for a
contradiction that
$\mu(\mathcal{F}_{\Phi})>0$. We may assume that
$\mu(\mathcal{F}_{\Phi})>\frac{3}{4}$, define $\mathcal S_n$ as
before, and conclude that $\mu(\mathcal{S}_n)\geq\frac{3}{4}$ for
density\-/$1$ many $n$. Define $d$ as follows. For each $n$, wait
until we see a class $\mathcal C$ of measure at least $\frac{3}{4}$
such that 
$\Phi^X(n)\converges$ for all $X \in \mathcal C$. If there is an $i$
such that $\Phi^X(n)=i$ for
at least measure-$\frac{1}{2}$ many $X \in \mathcal C$, then let
$d(n)=i$. Otherwise, let $d(n)=0$. Note that, if $d(n)\converges \neq
f(n)$, then
$\mu(\mathcal C \cap \mathcal S_n)<\frac{1}{2}$, so $\mu(\mathcal S_n)
< \frac{3}{4}$.

Since $\Phi^X$ is total for more than measure\-/$\frac{3}{4}$ many $X$, the
function $d$ is a total computable function. If $d(n) \neq f(n)$ then, as noted
above, $\mu(\mathcal{S}_n) < \frac{3}{4}$, so there can be only density\-/$0$
many such $n$. Thus $d$ is a computable coarse description of $f$.
\end{proof}

It was shown in~\cite[Theorem 5.6]{HJKS} that the weak $3$\-/randomness condition in
this theorem cannot in general be brought down to $2$\-/randomness, but
we do not know whether the levels of randomness needed for the other
theorems in this section can be decreased. (See Open
Question~\ref{q:randlevel}.)

As mentioned above, Theorem~\ref{thm:hjks} has as a consequence that
if $X$ is not coarsely computable and $Y$ is weakly $3$\-/random
relative to $X$, then $X$ and $Y$ form a minimal pair for relative
coarse computability, and hence for both uniform and nonuniform coarse
reducibility. A variant on the proof of Corollary 5.3 in~\cite{HJKS}
allows us to establish an analogous result for dense
computability. The coarse computability proof is based on the
observation that any two coarse descriptions of the same set are
necessarily coarse descriptions of each other. The analogous statement
makes little sense for dense descriptions, as these are partial
functions. However, by careful choice of a completion of a dense
description, we can recover the construction of minimal pairs as a
consequence of Theorem~\ref{thm:denseUpperCones} without increasing
the level of randomness involved.

\begin{cor}
\label{cor:minimal}
If $Y$ is not densely computable and $X$ is weakly $4$\-/random relative
to $Y$, then $X$ and $Y$ form a minimal pair for relative dense
computability (and hence for both uniform and nonuniform dense
reducibility).
\end{cor}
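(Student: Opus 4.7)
My plan is to mimic the argument for the coarse case in~\cite{HJKS}, using the hint from the preceding paragraph that the key issue is handling the partial nature of dense descriptions. Suppose for a contradiction that some $Z$ is densely computable relative to both $X$ and $Y$ yet is not densely computable. Fix a $Y$-computable dense description $d_Y$ of $Z$, and let $\hat d_Y$ be its zero\-/completion: $\hat d_Y(n)=d_Y(n)$ when $d_Y(n)\converges$, and $\hat d_Y(n)=0$ otherwise. Then $\hat d_Y$ is a total function with $\hat d_Y \leq\sub{T} Y$ that still agrees with $Z$ on a set of density $1$.

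I would then establish two observations. First, $\hat d_Y$ is not densely computable: any computable dense description $e$ of $\hat d_Y$ would, by transitivity of density\-/$1$ agreement ($e \approx \hat d_Y$ on density $1$, $\hat d_Y \approx Z$ on density $1$), also be a computable dense description of $Z$, contradicting the assumption on $Z$. Second, $X$ computes a dense description of $\hat d_Y$: any $X$-computable dense description $d_X$ of $Z$ agrees with $Z$ on a density\-/$1$ set, and so does $\hat d_Y$, so $d_X$ agrees with $\hat d_Y$ on a density\-/$1$ set.

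Now I invoke Theorem~\ref{thm:denseUpperCones} applied to $\hat d_Y$ in place of $f$. Since $\hat d_Y$ is not densely computable, no set that is weakly $4$-random relative to $\hat d_Y$ can compute a dense description of $\hat d_Y$. But $X$ is weakly $4$-random relative to $Y$, and $\hat d_Y \leq\sub{T} Y$, so $X$ is also weakly $4$-random relative to $\hat d_Y$ (every null $\Sigma^{0,\hat d_Y}_4$ class is a null $\Sigma^{0,Y}_4$ class, so $X$ avoids it). This contradicts the second observation, giving the desired contradiction.

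The main subtlety, as the paper flags, is the substitution of $\hat d_Y$ for $d_Y$. In the coarse case one exploits the fact that any two coarse descriptions of $Z$ are coarse descriptions of each other; this fails verbatim for dense descriptions because two partial descriptions can have essentially disjoint divergence patterns. Zero\-/completion produces a total, $Y$-computable surrogate $\hat d_Y$ that simultaneously (i) inherits dense\-/description status for $Z$ from $d_Y$, and (ii) is densely approximated by $d_X$, at no additional cost in randomness. Beyond formulating this completion cleanly, I do not anticipate any genuine obstacle.
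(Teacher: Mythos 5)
Your outline follows the paper's architecture (complete the $Y$\-/side dense description to a total function, observe that the $X$\-/side description densely describes that completion, and apply Theorem~\ref{thm:denseUpperCones}), but there is a genuine gap at the key step: the zero\-/completion $\hat d_Y$ of a partial $Y$\-/computable dense description is not in general $Y$\-/computable. To evaluate $\hat d_Y(n)$ you must decide whether $d_Y(n)\converges$, and the domain of a partial $Y$\-/computable function is only $Y$\-/c.e.; all you can guarantee is $\hat d_Y \leq\sub{T} Y'$. This breaks the randomness transfer on which your final step rests: a $\Pi^{0,\hat d_Y}_4$ null class is then only guaranteed to be $\Pi^{0,Y}_5$, so weak $4$\-/randomness of $X$ relative to $Y$ does not yield weak $4$\-/randomness relative to $\hat d_Y$; as written, your argument only establishes the corollary with weak $5$\-/randomness in place of weak $4$\-/randomness, which is precisely the increase in randomness level the paper says it avoids. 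The ``careful choice of a completion'' alluded to before the corollary is exactly this point: instead of the zero\-/completion, take $P$ to be PA over $Y$ and low relative to $Y$ (relativized low basis theorem), and let $D \leq\sub{T} P$ be a $\{0,1\}$\-/valued completion of the $Y$\-/computable dense description of $Z$. Then $D' \leq\sub{T} Y'$, so every $\Pi^{0,D}_4$ null class is a $\Pi^{0,Y}_4$ null class and $X$ remains weakly $4$\-/random relative to $D$; your two observations (that $D$ is not densely computable because it agrees with $Z$ on a set of density $1$, and that $d_X$ is a dense description of $D$) then go through essentially verbatim with $D$ in place of $\hat d_Y$.

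A secondary omission: for the parenthetical conclusion that $X$ and $Y$ form a minimal pair in the (uniform or nonuniform) dense degrees, you also need $X$ itself not to be densely computable. The paper gets this from the fact that $X$ is $1$\-/random: any partial computable $f$ with $\rho(\dom f)=1$ has a computable subset of its domain of positive lower density, and on that subset a $1$\-/random $X$ disagrees with $f$ with relative density $\frac{1}{2}$, so the set of disagreements has positive density.
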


\begin{proof}
If $f$ is a partial computable function with $\rho(\dom f)=1$, then,
as noted in the proof of Proposition~\ref{prop:abgd}, $\dom f$ has a
computable subset $S$ of positive lower density. Since $X$ is $1$\-/random,
the density within $S$ of numbers $n$ such that $f(n) \neq X(n)$ must
be $\frac{1}{2}$, and hence the overall density of such $n$ is
positive. Thus $X$ is not densely computable.

Now suppose that $C$ is densely computable relative both to $X$ and to
$Y$. Fix dense descriptions $\Phi^X$ and $\Psi^Y$ of $C$. Let $P$ be a
set that is both PA over $Y$ and low relative to $Y$. Then $P$
computes a $\{0,1\}$\-/valued completion $D$ of $\Psi^Y$. Since $D$ agrees
with $C$ on a set of density $1$, we have that $\Phi^X$ is a dense
description of $D$, and hence $D$ is densely computable relative to
$X$. However, $D$ is $P$\-/computable, and hence is low relative to $Y$,
so $X$ is weakly $4$\-/random relative to $D$. By
Theorem~\ref{thm:denseUpperCones}, $D$ must be densely
computable. Since any dense description of $D$ is also a dense
description of $C$, it follows that $C$ is densely computable.
\end{proof}

By the aforementioned result of Igusa~\cite{I1}, this method of
constructing minimal pairs cannot work for generic computability. It
also does not seem to work for effective dense computability, at least
not in any straightforward way. (See Open Question~\ref{q:minpairs}.) It is
worth noting, however, that if $Y$ is not generically computable, $X$
is weakly $4$\-/random relative to $Y$, and $C$ is generically
computable relative to both $X$ and $Y$ then, by
Corollary~\ref{cor:minimal}, $C$ is densely computable. In other
words, there are pairs of sets $X,Y$ such that all witnesses to the
fact that $X$ and $Y$ do not form a minimal pair for relative generic
computability are in a sense ``close to generically computable''.

We can also consider the sizes of upper cones in the sense of
category. Here we know less than in the case of measure, but do have
the following results.

\begin{thm}
\label{thm:onegeneric}
If $f$ is not generically computable and $X$ is $1$\-/generic relative
to $f$ then $X$ does not compute a generic description of $f$. Thus
all nontrivial upper cones in the (uniform or nonuniform) generic
degrees are meager.
\end{thm}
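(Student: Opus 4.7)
The plan is a standard $1$-genericity forcing argument. Assume for contradiction that $X$ is $1$-generic relative to $f$ and that $X$ computes a generic description of $f$, so fix a Turing functional $\Phi$ with $\Phi^X$ a generic description of $f$. Consider the $f$-c.e.\@ set of strings
\[
S = \{\sigma \in 2^{<\omega} : (\exists n,k)\,[\Phi^\sigma(n)\converges = k \text{ and } k \neq f(n)]\}
\]
and the induced dense $f$-c.e.\@ set $D = S \cup \{\sigma : \text{no } \sigma' \succeq \sigma \text{ lies in } S\}$. Since $X$ is $1$-generic relative to $f$, some $\tau \prec X$ lies in $D$; but $\tau \in S$ would force $\Phi^X$ to output an incorrect value on some $n$, contradicting that $\Phi^X$ is a partial description of $f$. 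So $\tau$ has no extension in $S$.

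Now use $\tau$ as a parameter to define a partial computable function $d$ by setting $d(n) = k$ whenever there exists $\sigma \succeq \tau$ with $\Phi^\sigma(n)\converges = k$. The choice of $\tau$ guarantees that any such $k$ equals $f(n)$, so $d$ is single-valued and agrees with $f$ wherever it converges. Moreover, since $\tau \prec X$, every $n \in \dom \Phi^X$ has some initial segment $\sigma \prec X$ with $\tau \preceq \sigma$ and $\Phi^\sigma(n)\converges$, so $\dom d \supseteq \dom \Phi^X$, which has density $1$. Hence $d$ is a computable generic description of $f$, contradicting the hypothesis that $f$ is not generically computable.

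For the second assertion, recall that for any oracle $f$ the class of sets $1$-generic relative to $f$ is comeager; by the first assertion, no such set computes a generic description of $f$, so no such set lies in the upper cone of $f$ in the nonuniform generic degrees (and hence in the uniform generic degrees, since $\leq\sub{ug}$ implies $\leq\sub{ng}$). Thus the upper cone of any nontrivial generic degree is meager.

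The main (minor) technical point is verifying that $d$ is well-defined and that $\dom d$ inherits density $1$ from $\dom \Phi^X$; everything else is forced by standard $1$-genericity considerations, and the fact that $\tau$ is a finite parameter is what lets us promote an $f$-computable observation to a genuinely computable description $d$.
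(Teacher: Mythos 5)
Your proof is correct and follows essentially the same route as the paper's: apply $1$-genericity relative to $f$ to the $f$-c.e.\@ set $S$ of strings on which $\Phi$ gives a wrong answer, obtain $\tau \prec X$ with no extension in $S$, and use $\tau$ as a finite parameter to define a partial computable $d$ whose domain contains $\dom \Phi^X$, yielding a computable generic description of $f$ and the desired contradiction. One cosmetic remark: your auxiliary set $D$ need not be $f$-c.e.\@ (its second half is only co-$f$-c.e.), but this is harmless, since the meet-or-avoid formulation of $1$-genericity applied directly to $S$ already gives the $\tau$ you need.
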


\begin{proof}
Suppose that $X$ is $1$\-/generic relative to $f$ and $\Phi^X$ is a
generic description of $f$. The set of $\sigma \in 2^{<\omega}$ such
that $\Phi^\sigma(n) \converges \neq f(n)$ for some $n$ is
$f$\-/c.e. Since $X$ does not meet this set, it must avoid it. Thus
there is a $\tau \prec X$ such that for all $\sigma \succ \tau$, if
$\Phi^\sigma(n)\converges$ then $\Phi^\sigma(n)=f(n)$. Define a
partial computable function $d$ by searching for strings $\sigma \succ
\tau$ and numbers $n$ such that $\Phi^\sigma(n)\converges$ and letting
$d(n)=\Phi^\sigma(n)$. Then $d$ is a partial description of $f$. Since
$\tau \prec X$, we have $\dom \Phi^X \subseteq \dom d$, so $\rho(\dom
d)=1$. Thus $d$ is a computable generic description of $f$.
\end{proof}

For the next result, we will use the following lemma
from~\cite{HJMS}. Let $J_k$ be the interval $[2^k-1,2^{k+1}-1)$. For a
set $C$, let $d_k(C)$ be the density of $C$ on $J_k$, that is,
$\frac{|C \cap J_k|}{2^k}$. Let $\overline{d}(C) = \limsup_k d_k(C)$.

\begin{lem}[Hirschfeldt, Jockusch, McNicholl, and Schupp~{\cite[Lemma
5.9]{HJMS}}]
\label{lem:hjms}  
For every set $C$, we have $\frac{\overline{d}(C)}{2} \leq
\overline{\rho}(C) \leq 2\overline{d}(C)$.
\end{lem}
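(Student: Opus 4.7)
The plan is to prove the two inequalities independently by exploiting the fact that the intervals $J_k$ have geometrically growing lengths $|J_k| = 2^k$ and that the union $\bigcup_{i=0}^{k} J_i$ equals the initial segment $[0, 2^{k+1}-1)$. Throughout, for any $n \geq 1$ there is a unique $k = k(n)$ with $n \in J_k$, so that $2^k - 1 \leq n < 2^{k+1} - 1$, and in particular $2^k$ and $n$ are comparable up to a factor of $2$.

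For the lower bound $\overline{d}(C)/2 \leq \overline{\rho}(C)$, I would witness the limsup directly at the right endpoints of the intervals $J_k$. By definition of $\overline{d}(C)$, pick a sequence $k_j \to \infty$ with $d_{k_j}(C) \to \overline{d}(C)$, and set $n_j = 2^{k_j + 1} - 1$. Since $J_{k_j} \subseteq [0, n_j)$, we have $|C \cap [0, n_j)| \geq |C \cap J_{k_j}| = d_{k_j}(C) \cdot 2^{k_j}$, and dividing by $n_j < 2^{k_j+1}$ gives $\rho_{n_j}(C) > d_{k_j}(C)/2$. Passing to the limit yields $\overline{\rho}(C) \geq \overline{d}(C)/2$.

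For the upper bound $\overline{\rho}(C) \leq 2\overline{d}(C)$, fix $\epsilon > 0$ and choose $K$ so that $d_k(C) \leq \overline{d}(C) + \epsilon$ for all $k \geq K$. For arbitrary $n$ with associated $k = k(n)$, decompose
\[
|C \cap [0, n)| \leq |C \cap [0, 2^{k+1}-1)| = \sum_{i=0}^{k} |C \cap J_i| \leq C_0 + (\overline{d}(C) + \epsilon) \sum_{i=K}^{k} 2^i,
\]
where $C_0 = |C \cap [0, 2^K - 1)|$ is a fixed constant. The geometric sum is bounded by $2^{k+1} \leq 2(n+1)$, so dividing by $n$ and taking $n \to \infty$ yields $\overline{\rho}(C) \leq 2(\overline{d}(C) + \epsilon)$. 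Letting $\epsilon \to 0$ completes the proof.

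There is no real obstacle here; the only point requiring a small amount of attention is confirming in the upper-bound argument that the contribution from $i < K$ is a genuine constant (so it disappears in the limit) while the remaining geometric sum is truly bounded by $2 \cdot 2^k$, which is where the factor of $2$ in the inequality originates. Symmetrically, the factor of $2$ in the lower bound comes from the fact that each $J_k$ carries half of the mass of $[0, 2^{k+1}-1)$ in the worst case.
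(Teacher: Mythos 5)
Your proof is correct: the lower bound follows by evaluating $\rho_n$ at the endpoints $n_j=2^{k_j+1}-1$, and the upper bound by partitioning $[0,2^{k+1}-1)$ into $J_0,\dots,J_k$, absorbing the finitely many intervals below $K$ into a constant, and bounding the geometric tail by $2^{k+1}\leq 2(n+1)$; all the estimates check out. Note that the paper itself gives no proof of this lemma (it is quoted from Hirschfeldt, Jockusch, McNicholl, and Schupp), so there is nothing internal to compare against, but your argument is the standard direct one for such statements and is complete as written.
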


Let $\gamma^X$ be the relativization of the coarse computability bound
$\gamma$ to the set $X$.
  
\begin{thm}
If $\gamma(f)<1$ and $X$ is $1$\-/generic relative to $f$ then
$\gamma^X(f)<1$.
\end{thm}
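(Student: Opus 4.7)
My plan is to argue the contrapositive: assume $\gamma^X(f) = 1$ and produce a computable function matching $f$ arbitrarily well, forcing $\gamma(f) = 1$. Fix an arbitrary $\eta > 0$ and, using the assumption, choose a functional $\Phi$ with $\Phi^X$ total and $\underline{\rho}(\{n : \Phi^X(n) = f(n)\}) > 1 - \eta$. Lemma~\ref{lem:hjms} converts this into a bound on the dyadic density: on all but finitely many blocks $J_k$, the fraction of $n \in J_k$ with $\Phi^X(n) \neq f(n)$ is strictly below $3\eta$.

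The core step is to use $1$-genericity relative to $f$ to absorb $X$ into a finite parameter. For each $K$, let $V_K$ be the $f$-c.e.\ set of strings $\sigma$ possessing some $k \geq K$ where $\Phi^\sigma$ converges on all of $J_k$ yet still shows at least $3\eta$ disagreement with $f$ there. For $K$ large enough, no initial segment of $X$ lies in $V_K$, since any such segment would certify bad behaviour of $\Phi^X$ on some $J_k$ with $k \geq K$, contradicting the previous paragraph. By $1$-genericity relative to $f$, there is then a $\tau \prec X$ having no extension in $V_K$.

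From $\tau$ I would define a computable $h$: set $h(n) = 0$ on $J_k$ for $k < K$, and for $k \geq K$ and $n \in J_k$ let $h(n) = \Phi^\sigma(n)$ for the first $\sigma \succeq \tau$ found with $\Phi^\sigma$ converging throughout $J_k$. This search always succeeds because some sufficiently long $X \uhr m$ is itself such a $\sigma$, so $h$ is total and computable. The avoidance property of $\tau$ forces the $d_k$-disagreement of $h$ with $f$ below $3\eta$ on every $J_k$ with $k \geq K$, so $\overline{d}(\{n : h(n) \neq f(n)\}) \leq 3\eta$; a second application of Lemma~\ref{lem:hjms} converts this into $\underline{\rho}(\{n : h(n) = f(n)\}) \geq 1 - 6\eta$. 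Hence $\gamma(f) \geq 1 - 6\eta$, and letting $\eta \to 0$ gives $\gamma(f) = 1$, contradicting the hypothesis.

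The delicate point is the definition of $h$: the witnessing $\sigma$ need not lie along $X$, so one must rely on the $1$-generic avoidance of $V_K$ to certify that every competing continuation of $\tau$ inherits the near-correctness that $\Phi^X$ enjoys. Routing the argument through dyadic density is what makes this certification a block-by-block, and hence computable, procedure.
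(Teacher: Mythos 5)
Your proof is correct and takes essentially the same route as the paper's: use $1$-genericity relative to $f$ to obtain $\tau \prec X$ avoiding an $f$-c.e.\ set of strings that certify too much disagreement between $\Phi^\sigma$ and $f$, then build a computable description of $f$ block-by-block by searching for extensions of $\tau$ on which $\Phi$ converges, converting between block density $d_k$ and asymptotic density via Lemma~\ref{lem:hjms}. The only differences are cosmetic: the paper defines its bad set via initial-segment densities $\rho_m$ and applies the lemma once (getting the bound $4\epsilon$), while you phrase the bad set block-by-block and apply the lemma twice (getting $6\eta$); both bounds tend to $0$, so the argument goes through equally well.
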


\begin{proof}
Suppose $X$ is $1$\-/generic relative to $f$ and $\gamma^X(f)=1$. Let
$\epsilon>0$ be rational. Let $\Phi$ be a functional such that
$\Phi^X$ is total and $\overline{\rho}(\{n : \Phi^X(n) \neq f(n)\}) <
\epsilon$. Then there is an $M$ such that if $m>M$ then $\rho_m(\{n :
\Phi^X(n) \neq f(n)\}) < \epsilon$. Let $S$ be the set of all $\sigma
\in 2 ^{<\omega}$ for which there is an $m>M$ such that
$\Phi^\sigma(n)\converges$ for all $n<m$ and $\rho_m(\{n :
\Phi^\sigma(n)\converges \neq f(n)\}) \geq \epsilon$. Then $S$ is
$f$\-/c.e., and $X$ does not meet $S$, so $X$ must avoid $S$. That is,
there is a $\tau \prec X$ such that for all $\sigma \succ \tau$, we
have $\sigma \notin S$.

Now define a computable function $g$ as follows.  Let $J_k$ be as
above. For each $k$ in turn, search for a $\sigma \succ \tau$ such
that $\Phi^\sigma(n)\converges$ for all $n<2^{k+1}$. Such a $\sigma$
must exist because $\tau \prec X$ and $\Phi^X$ is total. Once $\sigma$
is found, let $g(n)=\Phi^\sigma(n)$ for all $n \in J_k$. If $2^{k+1}
\geq M$ then, by the choice of $\tau$, we have $\rho_{2^{k+1}+1}(\{n :
\Phi^\sigma(n) \neq f(n)\}) < \epsilon$. Since $|J_k| = 2^k$, it
follows that $d_k(\{n : g(n) \neq f(n)\}) = d_k(\{n : \Phi^\sigma(n)
\neq f(n)\}) < 2\epsilon$. Thus $\overline{d}(\{n : g(n) \neq f(n)\})
\leq 2\epsilon$, so by Lemma~\ref{lem:hjms}, $\overline{\rho}(\{n :
g(n) \neq f(n)\}) \leq 4\epsilon$, and hence $\gamma(f) \geq
1-4\epsilon$. Since $\epsilon$ is arbitrary, $\gamma(f)=1$.
\end{proof}

\section{Open questions}
\label{sec:questions}

In this final section, we gather some questions left open in this
paper.

\begin{oq}
\label{q:degrees}
For each reducibility r among the ones defined in
Section~\ref{sec:relativization}, is it the case that every function
is r\-/equivalent to a set?
\end{oq}

\begin{oq}
\label{q:reds}
The following questions can be asked in both uniform and nonuniform
versions: Does effective dense reducibility imply generic
reducibility?  Does effective dense reducibility imply coarse
reducibility?  Does generic reducibility imply dense reducibility?
Does coarse reducibility imply dense reducibility?  Does effective
dense reducibility imply dense reducibility?

More generally, how do effective dense reducibility and dense
reducibility fit into the picture of implications between notions of
robust information coding given by Dzhafarov and Igusa~\cite{DI}?
\end{oq}

\begin{oq}
\label{q:minpairs}
Are there minimal pairs in the (uniform or nonuniform) generic or
effective dense degrees? Are there minimal pairs for relative
effective dense computability?
\end{oq}

\begin{oq}
\label{q:quasiminimal}
Do all $1$-random sets have quasiminimal degree in the uniform dense degrees?
\end{oq}

\begin{oq}
\label{q:randlevel}
Can the weak $4$\-/randomness condition in
Theorems~\ref{thm:genericUpperCones} and \ref{thm:denseUpperCones},
and Corollary~\ref{cor:minimal}, or the weak $3$\-/randomness condition
in Theorem~\ref{thm:effectiveUpperCones} be improved?
\end{oq}

\begin{oq}
Are there analogs of Theorem~\ref{thm:onegeneric} for coarse, dense,
and effective dense computability? In particular, are nontrivial upper
cones in the coarse, dense, and effective dense degrees meager?
\end{oq}

\begin{oq}
\label{q:dense}
If $h \leq\sub{nd} g$, does every dense oracle for $g$ compute a dense
oracle for $h$? If $h \leq\sub{ud} g$, does every dense oracle for $g$
uniformly compute a dense oracle for $h$?

A \emph{weak cofinite oracle} for a function $g$ is a cofinite oracle
for a mod-finite description of $g$. If $h \leq\sub{wcf} g$, does
every weak cofinite oracle for $g$ uniformly compute a weak cofinite
oracle for $h$? 
\end{oq}

\begin{oq}
\label{q:cf}
The following questions can be asked in Cantor space or in Baire
space: Do either of mf\-/reducibility and scf\-/reducibility imply the
other?  Do either of cf\-/reducibility and wcf\-/reducibility imply
the other? Does ubfb\-/reducibility imply scf\-/reducibility? Does
wcf\-/reducibility imply ubfb\-/reducibility?

Does mf\-/reducibility imply wcf\-/reducibility in Baire space?
\end{oq}

\end{document}